\renewcommand{\paragraph}{\@startsection{paragraph}{4}{0mm}{-3mm}{-3mm} {\noindent \bf}}
\renewcommand{\subparagraph}{\@startsection{paragraph}{4}{3mm}{-3mm}{-3mm} {\noindent \bf}}
\newcommand{\RR}{{\mathbb{R}}}
\newcommand{\EE}{{\mathbb{E}}}
\newcommand{\LL}{{\mathbb{L}}}
\newcommand{\PP}{{\mathbb{P}}}
\newcommand{\QQ}{{\mathbb{Q}}}
\newcommand{\CF}{{\mathcal{F}}}
\newcommand{\CA}{{\mathcal{A}}}
\newcommand{\CC}{{\mathcal{C}}}
\newcommand{\CI}{{\mathcal{I}}}
\newcommand{\CJ}{{\mathcal{J}}}
\newcommand{\CL}{{\mathcal{L}}}
\newcommand{\CB}{{\mathcal{B}}}
\newtheorem{lemma}{Lemma}
\newtheorem{proposition}{Proposition}
\newtheorem{example}{Example}
\newtheorem{Ass}{Assumption}
\def \ind{\mathbbm 1}
\begin{document}

\hyphenchar\font=-1

\baselineskip=7mm

\vspace{3cm}

\title{\textbf{A Class of Singular Control Problems \\ with Tipping Points}\thanks{This research has benefited from financial support of the ANR (Programme d'Investissements d'Avenir CHESS ANR-17-EURE-0010) and the research foundation TSE-Partnership (Chaire March\'es des Risques et Cr\'eation de Valeur, Fondation du Risque/SCOR).}}

\author{Jean-Paul D\'ecamps\thanks{Toulouse School of Economics, University of Toulouse Capitole, Toulouse, France. E-mail: \tt{jean-paul.}  \tt{decamps@tse-fr.eu}.}~~~~Fabien Gensbittel\thanks{Toulouse School of Economics, University of Toulouse Capitole, Toulouse, France. E-mail: \tt{fabien.} \tt{gensbittel@tse-fr.eu}.}\\Thomas Mariotti\thanks{Toulouse School of Economics, CNRS, University of Toulouse Capitole, Toulouse, France, CEPR, and CESifo. Email: \tt{thomas.mariotti@tse-fr.eu}.}~~~~St\'ephane Villeneuve\thanks{Toulouse School of Economics, University of Toulouse Capitole, Toulouse, France. Email: \tt{stephane.villeneuve@tse-fr.eu}.}} \vspace{1cm}

\maketitle \vspace*{6mm}

\begin{abstract}
Tipping points characterize situations where a regulated system may experience a sudden and irreversible change and are generally associated with a random state of the system below which the change materializes. In this paper, we study a singular stochastic control problem in which the performance criterion depends on the hitting time of a random state that is not a stopping time for the reference filtration. We establish a connection between the value of this problem and that of a singular control problem involving a diffusion and its running minimum. We provide a verification lemma that we apply to explicitly solve a resource-extraction problem with an ex-ante unknown tipping point.
\end{abstract}

\thispagestyle{empty}

\newpage
\setcounter{page}{1}

\section{Introduction}

Tipping points are typical of situations where a dynamically regulated system may undergo unpredictable irreversible changes. They are generally associated with a random state of the system below (or above) which the change materializes. There are numerous examples of tipping points in resource management and environmental sustainability. For instance, an increase in fires and grazing can exceed a forest's ability to regenerate, causing it to tip towards a savanna ecosystem. Excessive input of nutrients like nitrates and phosphates can lead to an explosive growth of algae, reducing oxygen in the water and creating dead zones (Hirota et al. \cite{Hirota}, Moore \cite{Moore}). The concept of tipping point has also entered the financial literature since the 2008 crisis, where too much home-loan defaults caused a decrease in the value of collateralized debt obligations, leading to the insolvency of banks and insurers (Battiston et al. \cite{Battiston}). Other examples include public health issues. For instance, in a pandemic, lock-down policies aim at controlling the number of infected people, balancing the risks of economic and health-system collapses.

A growing literature has developed models in which a tipping point is reached when a state variable of interest falls below a threshold whose exact value is ex-ante unknown, though it is distributed according to a known probability law. These models typically feature a decision maker (dm) who controls a state variable modeling the reserves of a valuable resource from which he can extract for his own consumption. Because the dm is uncertain about the value of the tipping point, he has to find the right balance between the benefits from current consumption and the risk of reaching the tipping point. Specifically, whereas an aggressive extraction policy raises current consumption and also brings valuable new information about the location of the tipping point each time a new minimum of the reserve process is reached, it also accelerates the time at which the system tips and thus risks over-exploiting and irreversibly damaging the resource.

To address the resulting optimal control problem, existing studies assume that the dynamics of reserves is deterministic conditional on the value of the tipping point and the dm's extraction behavior.\footnote{We review the relevant literature below.} Therefore, the only source of exogenous uncertainty is the level of the tipping point itself. However, real socio-ecological or economic systems are often subject to additional risks that are independent of the tipping point and lie beyond the control of the dm. These include, for instance, short-run variations in climatic conditions or contagion rates, fluctuations of demand or interest rates, or political instability. Incorporating such risks in tipping-point models is relevant and raises challenging questions for the characterization of optimal extraction behavior.

To address this problem, we consider a stylized resource-extraction model in which the uncontrolled reserve process is a diffusion $X$ and the extraction policy is modeled as a nondecreasing control process $L$, resulting in a controlled reserve process $X^L$. The tipping point is represented as an independent random level of reserves $Y$ that is ex-ante unknown to the dm, who only knows the distribution of $Y$. Extraction lasts until the first time $t$ such that $X^L_t \leq Y$. (Jumps in $L$ are allowed for, so that this inequality may be strict.) When this occurs, at time $\tau_Y$, the dm obtains a continuation value $U(X_{\tau_Y}^L)$, where $U$ can be interpreted as the value function of a downgraded extraction problem that the dm hopes to face as late as possible. The dm's objective is to maximize the expectation of the sum of an integral of his discounted extraction increments and of his continuation value,
\begin{align*}
{\EE} \hskip -0.5mm \left[\int_{[0,\tau_{Y}]} \mathrm e^{-rs} \, \mathrm dL_s + \mathrm e^{-r \tau_{ Y}}U( X_{\tau_{ Y}}^L) \right]\hskip -1mm.
\end{align*}
The dm's problem is thus a singular control problem with a random tipping point. The classic tradeoff between the costs and benefits of holding reserves now depends on the distribution of the tipping point and on the properties of the continuation value function, making this problem both novel and nontrivial.

Our first main result is that, because the time $\tau_Y$ when the system tips is the hitting time by $X^L$ of a region of the form $(- \infty, Y]$, the dm's problem admits a Markovian formulation in which the state variables are the current level of reserves and the minimum level they have reached so far. Intuitively, if the current level of reserves is above their historic minimum, then the dm momentarily does not risk crossing the tipping point, and he will learn nothing more about the location of the tipping point until reserves reach a new minimum level. Once $X^L$ and its running minimum $M^L$ have been identified as the relevant state variables, one is lead to a two-dimensional singular control problem in which the reward functional features the expectation of an integral with respect to future decrements of $M^L$. We provide a verification lemma for this new class of problems, taking care of the fact that a downward jump in the running minimum impacts differently the dm's reward functional depending on whether or not the tipping point has been crossed as a result of this jump.

Our second main result is to solve the dm's problem by means of an explicit construction of its value function and to characterize the optimal extraction policy when it exists. When reserves are at their historic minimum, three situations may arise. (1) Either there exists an optimal control process $L$ that allows the reserves to grow up to a free extraction boundary $b$, which is a function of the running minimum; we show that this free boundary is the solution to an ode which takes into account the law of the tipping point $Y$, the dynamics of the reserves before crossing the tipping point, and the continuation value function $U$. (2) Or it is optimal to extract in one go a lump-sum fraction of the reserves up to an endogenous level, after which the optimal control $L$ in (1) is applied. (3) Or no optimal control exists. This latter case corresponds to a situation in which extracting reserves up to an endogenous level in one go would be optimal, provided this did not result in crossing the tipping point. However, because the dm does not know the tipping point, he cannot implement such a policy. We show that, for each $\varepsilon >0$, there exists an $\varepsilon$-optimal policy, which consists in identifying the tipping point ``as fast as possible'' by extracting at a rate $\mathcal O\big({1 \over \varepsilon}\big)$.

\paragraph{Related Literature}

Our paper is related to a large literature in environmental economics, in which a tipping point is reached when an underlying state variable with deterministic dynamics crosses an unknown threshold. This approach, first considered in Kemp \cite{Kemp}, has been developed  and extended in several directions. For example, Tsur and Zemel \cite{Tsur1} study saltwater intrusion in groundwater resources, Tsur and Zemel \cite{Tsur2} and  Lemoine and Traeger \cite{Lemoine}  explore climate tipping points, N{\ae}vdal \cite{Naevdal} studies the eutrophication of lakes, Diekert \cite{Diekert} studies the cooperative or non-cooperative use of a resource under the threat of a disastrous threshold, and Guillouet and Martimort \cite{Guillouet} and Liski and Salani\'e \cite{Liski} develop models with delay between the unobservable crossing of a tipping point and the observable occurrence of a catastrophe. We provide this literature with a rigorous mathematical framework that enables us to account for a stochastic dynamics of the state variable.

Our paper is related to the literature on the optimal management of a resource in diffusion settings. The word \textit{resource} is here meant in a broad sense. It can refer to corporate cash, as in Jeanblanc-Picqu\'e and Shiryaev \cite{Jeanblanc}, Radner and Shepp \cite{Radner}, or D\'ecamps, Mariotti, Rochet, and Villeneuve \cite{Dec1}, populations, as in Alvarez and Hening \cite{Alvarez1}, or any asset subject to a control problem of storage or inventory type, as in Shreve, Lehoczky, and Gaver \cite{SGL}, which serves as the natural benchmark for our model. These studies develop one-dimensional singular stochastic control problems, in which the optimal policy is to extract the resource when its reserves exceed an endogenous critical value.

Other studies have incorporated stochastic profitability or stochastic interest rates into the standard optimal management model. These extensions led to new and challenging control problems in two-dimensional diffusion settings, as in Reppen, Rochet, and Soner \cite{Reppen}, De Angelis \cite{Deangelis1},  Bandini, De Angelis, Ferrari, and Gozzi \cite{Bandini}, and D\'ecamps and Villeneuve \cite{Dec2}. The present paper deals with a different type of  problem, in which the dm controls the level of the resource and its running minimum. While there exists an extensive literature on two-dimensional optimal stopping problems involving the running maximum (or the running minimum) of a one-dimensional diffusion,\footnote{Shepp and Shiryaev \cite{Shepp}, Dubins, Shepp, and Shiryaev \cite{Dubins}, Graversen and Peskir \cite{Graversen}, and Peskir \cite{Peskir} offer seminal contributions. See also Guo and Zervos \cite{Guo}, Ott \cite{Ott}, Gapeev and Rodosthenous \cite{Gapeev}, Rodosthenous and Zervos \cite{Rodos}, and D\'ecamps, Gensbittel, and Mariotti \cite{Dec3} for more recent contributions.} stochastic control problems with similar features have been scarcely ever considered.

To the best of our knowledge, the contemporaneous work of Ferrari and Rodosthenous \cite{FerrariNeofytos} is the only other study that develops a two-dimensional optimal control problem involving the running minimum of a diffusion process. Our paper and \cite{FerrariNeofytos} differ and complement each other in several ways. First, in \cite{FerrariNeofytos}, the class of reward functionals is chosen so that a standard Neumann condition expresses the behavior of the value function on the diagonal $X^L = M^L$; using their terminology, the class of reward functionals is consistent with the Hamilton--Jacobi--Bellman (HJB) equation for the resulting two-dimensional singular control problem. This property is not satisfied in our setting, where the reward functional is not set a priori but follows from the explicit modeling of a tipping point, a novel feature of the present paper. Second, within our class of problems, jumps of the process $(X^L, M^L)$ impact the reward functional differently and, as a result, the construction of candidate value functions raises other difficulties and requires a new verification lemma. Accordingly, the tipping-point problem that we solve analytically in Section 4 presents a number of unique features---most notably, an integral with respect to the controlled minimum process; in the Markovian formulation of the tipping-point problem, this reflects the fact that the dm does not know at which level of the controlled minimum process the system will tip. Third, depending on the parameters of the model, our tipping-point problem admits optimal or only $\varepsilon$-optimal strategies, which we characterize analytically. As far as we know, all our results, including the very class of two-dimensional control problems we study, are novel in the field of stochastic control and its applications.

\vskip 3mm

The paper is organized as follows. Section 2 describes our tipping-point problem. Section 3 provides the Markovian formulation of this problem, presents the HJB equation, and proves the required verification lemma. Section 4 characterizes the ($\varepsilon$-) optimal extraction policy. Section 5 concludes.

\section{Problem Formulation}

\paragraph{The Uncontrolled Reserve Process}

Let $X: =(X_t)_{t \ge 0}$ be a linear time-homogeneous diffusion defined on a complete probability space $(\Omega, \CF, \PP)$ that is a strong solution to
\begin{align*}
\mathrm dX_t =  \mu(X_t) \, \mathrm dt  +  \sigma(X_t) \, \mathrm dB_t, \quad X_0=x,
\end{align*}
where $B:=(B_t)_{t \ge 0}$ is a one-dimensional Brownian motion defined on $\Omega$. The state space for $X$ is an interval $\CI=(\alpha, \infty)$, with $-\infty \le \alpha < 0$. The following assumption is maintained throughout the paper.

\begin{Ass}\label{coefficient}
The functions $\mu$ and $\sigma$ \pagebreak are Lipschitz on $\CI,$ $\sigma> 0$ on $\CI,$ and $\alpha$ and $\infty$ are natural boundaries for $X$.
\end{Ass}

We let $\mathbb F := (\CF_t)_{t\ge 0}$ be the augmented right-continuous filtration generated by $B$ on $\Omega$.

\paragraph{Controls}

Let $\LL$ be the set of $\mathbb F$-adapted processes $L : =(L_t)_{t \geq 0}$ that are $\PP$-a.s.\! nondecreasing and right-continuous, with $L_{0-} =0$. We consider the controlled reserve process $X^L:=(X^L_t)_{t \geq 0}$ on $(\Omega, \CF, \mathbb F,\PP)$ satisfying
\begin{align}\label{eq1}
\mathrm dX_t^{L} =  \mu(X_t^{L}) \, \mathrm dt + \sigma(X_t^{L}) \, \mathrm dB_t-\mathrm dL_t, \quad X_{0-}^L=x.
\end{align}
Thanks to Assumption \ref{coefficient}, the sde \eqref{eq1} admits a unique strong solution for all $L \in \LL$ (see \cite[Chapter V, \S3, Theorem 7]{ProtterBook});\footnote{Precisely, one may assume that $\mu$ and $\sigma$ are Lipschitz functions on $\RR$ by taking any Lipschitz extension to obtain a solution $X^L$ taking values in $\RR$. Thereafter, we only consider solutions taking values in $\CI$.} clearly, $X^L \equiv X$
if $L\equiv 0$. As it is reasonable to assume that the controlled reserve process $X^L$ cannot take negative values, we restrict the class of controls accordingly. Specifically, letting $(\cdot)^+$ denote the positive-part function on $\mathbb R$, the class of admissible controls with initial condition $X^L_{0-}=x$ is
\begin{align}
\CA(x): =\big\{L \in \LL :  X^L_t \in \CI \text{ and } (X_{t-}^L)^+-(\Delta L)_t \ge 0 \text{ for all } t \ge 0\big\}. \label{defsetcontrols}
\end{align}
Accordingly, we may assume without loss of generality that $x \geq 0$. Thus, for each $L \in \CA(x)$, letting $\tau_y : =\inf\{t \geq 0: X_t^L \leq y\}$ for $y\in \CI$, we have $X^L_t \geq 0$ for all $t\leq\tau_0$.

\paragraph{Tipping Points}

The tipping point is represented by a random variable $Y$ with law ${\QQ}$, taking values in $\CI$, and independent of $B$. We assume that the tipping point will be reached before the resource is depleted, so that $Y$ is nonnegative, and also, for technical reasons, that $Y$ admits a density.

\begin{Ass}\label{density}
The cdf $F$ of $Y$ has a density $f$ on $\CI$ with respect to Lebesgue measure$,$ and there exists $\overline y>0$ such that $f$ is Lipschitz and positive on $[0,\overline y]$ and nil on $\CI\setminus[0,\overline y]$.
\end{Ass}

For each $L \in \mathcal A(x)$, $X^L$ is $\mathbb F$-adapted and thus independent of $Y$. It will be convenient to work with the product probability space $(\overline \Omega, \overline \CF, \overline {\PP}) := (\Omega \times \CI, \CF \otimes \CB(\CI), \PP \otimes \QQ)$, where $\CB(\CI)$ is the Borel $\sigma$-field on $\CI$, and we will assume that $Y(\omega,y)=y$ is defined on $\overline\Omega$. We denote by $\EE$ and $\overline {\EE}$ the expectation operators associated to $\PP$ and $\overline {\PP}$, respectively. Random variables defined on $\Omega$ are identified to variables defined on $\overline\Omega$. The random time $\tau_Y$ at which the tipping point is reached is defined by
\begin{align}
\label{tippingpoint}
\tau_Y: = \inf \hskip 0.5mm  \{ t \ge 0 : X_t^L \le Y \}.
\end{align}
Clearly, $\tau_Y$ depends on $L$, but we omit this dependence to simplify notation. To emphasize the dependence of $X^L$ on $x$, we will use the slightly abusive notation $\PP_{x}:=\PP \hskip 0.3mm [\cdot \! \mid \!X^L_{0-}=x]$ and $\EE_{x}:=\EE \hskip 0.3mm [\cdot \! \mid \! X^L_{0-}=x]$, with $\overline\PP_x$ and $\overline\EE_x$ defined similarly. In what follows, we use the convention that
\begin{align*}
\text{on } \{\tau=\infty\}, \; \int_{[0,\tau]}H_s \, \mathrm dA_s := \int_{[0,\infty)} H_s \, \mathrm dA_s \text{ and } \mathrm e^{-r\tau} H_\tau:=0
\end{align*}
for any random time $\tau: \overline\Omega \rightarrow [0,\infty]$, any measurable process $H$ (not necessarily defined at time $\infty$), and any process with finite variation $A$.

\paragraph{The Tipping-Point Problem}

The resource-extraction problem when there is a risk of reaching a tipping point, at which the dm receives a continuation value $U:\CI \mapsto \RR_+$, can now be defined as follows:
\begin{align}
\overline V(x) : = \sup_{L \in {\CA(x)}}  \overline{\EE}_x \! \left[\int_{[0,\tau_{Y}]} \mathrm e^{-rs} \, \mathrm dL_s + \mathrm e^{-r \tau_{ Y}}U( X_{\tau_{ Y}}^L) \right]\hskip -1mm, \label{pb}
\end{align}
for some discount rate $r >0$.

\section{Dynamic Programming}

In this section, we show how to reduce the tipping-point problem \eqref{pb} to a Markovian singular control problem, and we provide a key verification lemma.

\subsection{A Markovian Formulation}

Given $L \in \LL$ and $m \in [0, x]$, define the minimum process $M^L : = (M^L_t)_{t \geq 0}$ by $M^L_t : = m \wedge\inf_{s \leq t} X^L_s$. The pair $(X^L,M^L)$ defines an $\mathbb F$-adapted process such that $(X^L_{0-},M^L_{0-})=(x,m)$. To emphasize the dependence on $(x,m)$, we will use the notation $\PP_{x,m}:=\PP\hskip 0.3mm [\cdot \! \mid \! (X^L_{0-},M^L_{0-})=(x,m)$ and $\EE_{x,m}:=\EE \hskip 0.3mm [\cdot \! \mid \! (X ^L _{0-},M^L_{0-})=(x,m)]$. As we only consider the process $(X^L,M^L)$ up to the hitting time $\tau_0$, the natural state space for our problem is
\begin{align*}
\CJ: =\{(x,m) \in (0,\infty)^2 :  x\ge m \}.
\end{align*}
Our first result relates the tipping-point problem to a singular stochastic control problem for a one-dimensional diffusion and its running minimum.

\begin{proposition} \label{markovprop}
For each $(x,m)\in \CJ,$ consider the singular control problem
\begin{align} \label{markovv}
V(x,m) : =  \sup_{L \in {\cal A}(x)} V(x,m;L),
\end{align}
with
\begin{align}
V(x,m;L)& := \EE_{x,m} \!\left [ \int_{[0,\tau_0]} \mathrm e^{-rs}  F(M^L_{s-})  \, \mathrm dL_s \right] \! -\EE_{x,m} \! \left [\int_{[0,\tau_0]} \mathrm e^{-rs} U(M^L_{s}) f(M^L_{s}) \, \mathrm dM^{L,c}_s \right ] \nonumber
\\
& \quad \; +   \EE_{x,m} \!\left [\sum_{0\leq s \leq \tau_0} \mathrm e^{-rs}  U(M^L_{s}) [F(M^L_{s-}) - F({M^L_s})] \right]\hskip -1mm,  \label{reward}
\end{align}
where $M^{L,c}$ is the continuous part of $M^L$. Then$,$
\begin{align} \label{haha}
\overline V(x) = V(x,x) + U(x)[1 - F(x)] .
\end{align}
\end{proposition}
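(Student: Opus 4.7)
The plan is to evaluate the inner expectation over $Y$ pathwise using independence of $Y$ and $\mathbb F$, then carry out a pathwise change of variables that rewrites $y$-integration against the density $f$ as a Lebesgue--Stieltjes integral along the minimum process $M^L$. The starting observation is that, since $M^L$ is non-increasing and RCLL with $M^L_{0-}=x$, the random time $\tau_y$ coincides with the hitting time $\inf\{t\ge 0:M^L_t\le y\}$ for $y<x$, and $X^L_{\tau_y}=M^L_{\tau_y}$ at the crossing; moreover $\{s\le\tau_y\}=\{y<M^L_{s-}\}$ for $s>0$.

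For the extraction piece of the reward, Fubini and the preceding identity give
\[
\overline{\EE}_x\!\left[\int_{[0,\tau_Y]} e^{-rs}\,dL_s\right]=\EE_x\!\left[\int_{[0,\infty)} e^{-rs}\!\left(\int_{\CI} \ind_{\{s\le\tau_y\}} f(y)\,dy\right) dL_s\right],
\]
and the inner integral collapses to $F(M^L_{s-})$, which vanishes beyond $\tau_0$; up to a boundary correction at $s=0$ this reproduces the first summand of $V(x,x;L)$.

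For the terminal piece, conditioning on $\CF_\infty$ gives
\[
\overline{\EE}_x\!\left[e^{-r\tau_Y}U(X^L_{\tau_Y})\right]=\EE_x\!\left[\int_0^{\bar y} f(y)\,e^{-r\tau_y}\,U(M^L_{\tau_y})\,dy\right],
\]
which I decompose according to the structure of the path $M^L$: (i) on $\{y\ge M^L_0\}$ the crossing is instantaneous, $\tau_y=0$ and $M^L_{\tau_y}=M^L_0$, contributing exactly $U(x)(1-F(x))$ when $\Delta L_0=0$; (ii) on each jump gap $(M^L_s,M^L_{s-})$ of $M^L$ at a time $s>0$, the map $y\mapsto(\tau_y,M^L_{\tau_y})$ is constant equal to $(s,M^L_s)$, so integrating $f(y)$ over the gap produces the jump sum that appears as the third summand of $V(x,x;L)$; (iii) on the range of the continuous part $M^{L,c}$, the change of variables $y=M^L_s$, $dy=-dM^{L,c}_s$ yields the negative of the second summand of $V(x,x;L)$.

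Summing the two pieces delivers, for any admissible $L$ with $\Delta L_0=0$, the pathwise identity that integrates to $V(x,x;L)+U(x)(1-F(x))$; passing to the supremum over $L\in\CA(x)$ gives the claim. The main obstacle is the execution of the change of variables in (iii), which requires a clean decomposition of $M^L$ into its continuous and pure-jump parts and a precise matching of each $y$-value with the corresponding regime. A secondary subtlety is the treatment of a possible initial jump $\Delta L_0>0$: this replaces the clean $U(x)(1-F(x))$ by a boundary term of the form $(1-F(x))(\Delta L_0+U(x-\Delta L_0))$, which must be reconciled with the right-hand side either by an approximation argument (replacing the initial jump by rapid continuous extraction on $[0,\varepsilon]$) or by invoking the super-additivity $a+U(x-a)\le U(x)$ that $U$ inherits from its interpretation as the value function of the downgraded extraction problem.
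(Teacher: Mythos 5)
Your proposal follows essentially the same route as the paper's proof: split off the event $\{Y\ge x\}$, apply Fubini together with the inclusions $\{M^L_{s-}>y\}\subset\{s\le\tau_y\}\subset\{M^L_{s-}\ge y\}$ to identify the extraction term, and convert the $y$-integral of $e^{-r\tau_y}U(M^L_{\tau_y})f(y)$ into the continuous Stieltjes integral plus the jump sum --- the paper organizes your step (iii) by adding and subtracting $f(M^L_{\tau_y})$ and invoking the change-of-variables formula for Stieltjes integrals (Revuz--Yor, Ch.~0, Prop.~4.9), which is exactly the tool that resolves your ``main obstacle''. Your closing remark about an initial jump $\Delta L_0>0$ is well taken: the paper's first display silently replaces the contribution $(\Delta L_0+U(x-\Delta L_0))(1-F(x))$ on $\{Y\ge x\}$ by $U(x)(1-F(x))$, and either of your two fixes (approximating the initial jump by rapid continuous extraction, or using $a+U(x-a)\le U(x)$, which follows from $U'\ge 1$ in Assumption 4) is the right way to close that small gap.
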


\noindent\textbf{Proof.} First, notice that $\tau_{ Y} = 0$ on $\{ Y \geq x \}$. Thus, for each $L \in \CA(x)$,
\begin{align*}
\overline{\EE}_x \! \left  [\int_{[0,\tau_{Y}]} \mathrm e^{-rs} \, \mathrm dL_s + \mathrm e^{-r \tau_{ Y}}U( X_{\tau_{ Y}}^L) \right] \!= J(x,L)  +  U(x) [1 - F(x)],
\end{align*}
where
\begin{align*}
J(x,L):= \overline{ \EE}_x \! \left [\left [\int_{[0,{\tau_{Y} }]} \mathrm e^{-rs}\,\mathrm dL_s + \mathrm e^{-r \tau_{ Y}}U( X^L_{\tau_{ Y}})\right] \! \ind_{Y < x}\right ] \hskip -1mm.
\end{align*}
Then, considering the family of $\mathbb F$-stopping times $(\tau_y)_{y \in \CI}$, and letting $M^L$ be defined with initial condition $M^L_{0-}=x$, we have, by independence of $B$ and $Y$,
\begin{align}
J(x,L)& =  \EE_x \! \left [\int_0^x \! \left[\int_{[0,{\tau_y}]}\mathrm e^{-rs} \, \mathrm dL_s + \mathrm e^{-r\tau_y} U(X^L_{\tau_y}) \right ]\! f(y) \, \mathrm dy \right ] \nonumber
\\
& =  \EE_x \! \left [\int_0^x \! \left[ \int_{[0,\infty)}\mathrm e^{-rs} \ind_{s \leq\tau_y} \, \mathrm dL_s   + \mathrm e^{-r\tau_y} U(X^L_{\tau_y})\right ]\! f(y) \, \mathrm dy \right ] \nonumber
\\
& =  \EE_{x} \! \left [ \int_{[0,\infty)} \mathrm e^{-rs} \! \left(\int_0^x \ind_{s \leq\tau_y}f(y) \, \mathrm dy \right)  \mathrm dL_s   +  \int_0^x \mathrm e^{-r \tau_y} U(X^L_{\tau_y}) f(y) \,\mathrm dy \right ] \nonumber
\\
& =   \EE_{x,x} \! \left [ \int_{[0,\infty)} \mathrm e^{-rs} \! \left ( \int_0^{ M^L_{s-}} f(y) \, \mathrm dy \right ) \mathrm dL_s + \int_0^x \mathrm e^{-r \tau_y} U(X^L_{\tau_y}) f(y) \, \mathrm dy\right] \nonumber
\\
& =   \EE_{x,x} \! \left [ \int_{[0,\infty)} \mathrm e^{-rs} F(M^L_{s-})  \, \mathrm dL_s  + \int_0^x \mathrm e^{-r \tau_y} U(M^L_{\tau_y}) f(y) \,\mathrm dy\right]\hskip -1mm, \label{deriv'}
\end{align}
where the third equality follows from Fubini's theorem and the fourth equality follows from the relation
\begin{align*}
\{M^L_{s-} >  y \} \subset \{s \leq \tau_y \} \subset  \{M^L_{s-} \geq  y \}.
\end{align*}
We now show that, for each $m\leq x$,
\begin{align}
& \EE_{x,m}\! \left [ \int_0^m \mathrm e^{-r\tau_y}  U(M^L_{\tau_y}) f(y) \,\mathrm dy \right ] \label{inter}
\\
&= - \,  \EE_{x,m} \!\left [ \int_{[0, \tau_0]}\mathrm e^{-rs}  U(M^L_{s}) f(M^L_s )\, \mathrm dM^{L,c}_s \right ]\! + \EE_{x,m}\!\left [\sum_{0 \leq s \leq \tau_0}\mathrm e^{-rs} U(M^L_s) [F(M^L_{s-}) - F(M^L_s)] \right]\hskip -1mm. \nonumber
\end{align}
Because $f(y)-f(M^L_{\tau_y})$ is different from $0$ only if $M^L$ has a jump at ${\tau_y}$, we have
\begin{align*}
&  \EE_{x,m} \! \left [ \int_0^m \mathrm e^{-r\tau_y}  U(M^L_{\tau_y}) f(y) \, \mathrm dy \right ]
\\
&=  \EE_{x,m}\! \left [ \int_0^m \mathrm e^{-r\tau_y}  U(M^L_{\tau_y}) f(M^L_{\tau_y} ) \, \mathrm dy \right ] \! +  \EE_{x,m} \!\left [ \int_0^m\mathrm e^{-r\tau_y} U( M^L_{\tau_y}) [f(y) -f(M^L_{\tau_y} )] \, \mathrm dy\right] \nonumber
\\
&=  \EE_{x,m} \! \left [ \int_0^m \mathrm e^{-r\tau_y}  U(M^L_{\tau_y}) f(M^L_{\tau_y} ) \, \mathrm dy \right ] \! +  \EE_{x,m} \! \left [ \sum_{0\leq s \leq \tau_0} \int_{M^L_s}^{M^L_{s^-}} \mathrm e^{-r\tau_y} U( M^L_{\tau_y})
[f(y) -f(M^L_{\tau_y} )] \, \mathrm dy\right]\hskip -1mm .
\end{align*}
For the first term of the last equality, a standard change of variables formula for Stieltjes integrals (\cite[Chapter 0, Proposition 4.9]{RevuzYor}) yields
\begin{align*}
\EE_{x,m} \!\left [ \int_0^m \mathrm e^{-r\tau_y}  U(M^L_{\tau_y}) f(M^L_{\tau_y} ) \, \mathrm dy \right ] =  - \, \EE_{x,m} \!\left [ \int_{[0, \tau_0]} \mathrm e^{-rs}  U(M^L_{s}) f(M^L_s )\, \mathrm dM^L_s \right]\hskip -1mm .
\end{align*}
For the second term, observe that $\tau_y=s$ and $M^L_{\tau_y}=M^L_s$ for all $y \in [M^L_s,M^L_{s-})$. Therefore,
\begin{align*}
&\EE_{x,m} \! \left [ \sum_{0\leq s \leq \tau_0} \int_{M^L_s}^{M^L_{s-}} \mathrm e^{-r\tau_y} U( M^L_{\tau_y}) [f(y) -f(M^L_{\tau_y} )]\, \mathrm dy\right]
\\
&= \EE_{x,m}\! \left [\sum_{0\leq s \leq \tau_0} \mathrm e^{-rs} U(M^L_s )\int_{M^L_s}^{M^L_{s-}}[f(y) - f(M^L_s)] \, \mathrm dy \right]
\\
&= \EE_{x,m}\! \left [\sum_{0\leq s \leq \tau_0} \mathrm e^{-rs} U(M^L_s) [F(M^L_{s-}) - F(M^L_s) - f(M^L_s) (M^L_{s-} - M^L_s)]\right] \hskip -1mm ,
\end{align*}
and thus
\begin{align*}
& \EE_{x,m} \!\left [ \int_0^m \mathrm e^{-r\tau_y}  U(M^L_{\tau_y}) f(y) \, \mathrm dy \right ]
\\
&=  - \, \EE_{x,m} \! \left [ \int_{[0, \tau_0]} \mathrm e^{-rs}  U(M^L_{s}) f(M^L_s ) \, \mathrm dM^L_s \right ]
\\
&  \quad \hskip 0.2mm + \EE_{x,m} \!\left [\sum_{0\leq s \leq \tau_0}\mathrm e^{-rs} U(M^L_s) [F(M^L_{s-}) - F(M^L_s) - f(M^L_s) (M^L_{s-} - M^L_s)]\right]
\\
& =  - \, \EE_{x,m}\! \left [ \int_{[0, \tau_0]}\mathrm e^{-rs}  U(M^L_{s}) f(M^L_s ) \, \mathrm dM^{L,c}_s\right ]\! - \EE_{x,m}\! \left [\sum_{0\leq s \leq \tau_0} \mathrm e^{-rs} U(M^L_s) f(M^L_s) (M^L_s - M^L_{s-})\right ]\allowdisplaybreaks
\\
& \quad \hskip 0.2mm + \EE_{x,m}\! \left [\sum_{0\leq s \leq \tau_0}\mathrm e^{-rs} U(M^L_s) [F(M^L_{s-}) - F(M^L_s) - f(M^L_s) (M^L_{s-} - M^L_s)]\right]
\\
& = - \,\EE_{x,m}\!\left[\int_{[0, \tau_0]}\mathrm e^{-rs}  U(M^L_{s}) f(M^L_s )\, \mathrm dM^{L,c}_s \right ]\! + \EE_{x,m}\! \left[\sum_{0\leq s \leq \tau_0}\mathrm e^{-rs} U(M^L_s) [F(M^L_{s-}) - F(M^L_s)]\right] \hskip -1mm ,
\end{align*}
which is (\ref{inter}). From (\ref{deriv'})--(\ref{inter}) with $m=x$, we have $J(x,L)=V(x,x;L)$. Hence the result. \hfill $\blacksquare$

\bigskip

The interpretation of (\ref{haha}) is that $U(x)$ corresponds to the dm's value function conditional on the tipping point being above $x$, which occurs with probability $1 - F(x)$, whereas the ratio $V(x,x)\over F(x)$ corresponds to the dm's value function conditional on the tipping point being below $x$, which occurs with probability $F(x)$.

\paragraph{Comparison with \cite{FerrariNeofytos}}

Proposition \ref{markovprop} shows that computing $V(x,x)$ involves solving the singular control problem (\ref{markovv}) for the two-dimensional process $(X^L,M^L)$ on the state space ${\mathcal J}$. This problem differs from the one studied in \cite{FerrariNeofytos} because jumps of the process $(X^L,M^L)$ impact the reward functional in a different way. Using the integral operators $\diamond$ and $\scriptstyle \square$ introduced in \cite[Section 3]{FerrariNeofytos}, the reward functional in \cite{FerrariNeofytos} for our model writes as
\begin{align}
\widehat V(x,m;L) &:= \EE_{x,m}\! \left[\int_{[0,\tau_0]} \mathrm e^{-rs} [F(M_s^L) \diamond \mathrm dL_s -U(M_s^L) f(M_s^L) \,{\scriptstyle \square} \, \mathrm dM_s^L ] \right] \notag
\\
&= \EE_{x,m} \!\left [ \int_{[0,\tau_0]} \mathrm e^{-rs}  F(M^L_{s-})  \, \mathrm dL_s \right] \! -\EE_{x,m} \! \left [\int_{[0,\tau_0]} \mathrm e^{-rs} U(M^L_{s}) f(M^L_{s}) \, \mathrm dM^{L,c}_s \right ] \notag
\\
& \quad + \EE_{x,m} \!\left [\sum_{0\leq s \leq \tau_0} \mathrm e^{-rs}\! \left[\int_{M_{s}^L}^{M_{s^-}^L}[U( u) f(u)   + F(u)] \, \mathrm du - (  M_{s^-}^L-M_{s}^L) F(M_{s^-}^L) \right] \right]\hskip -1mm, \notag
\end{align}
In case $x=m$, using similar arguments as in the proof of Proposition \ref{markovprop}, we can show that
\begin{equation}
\widehat V(x,x;L)= \overline{\EE}_x \! \left[ \left\{\int_{[0,\tau_{Y}]} \mathrm e^{-rs} \, \mathrm dL_s + \mathrm e^{-r \tau_{ Y}}[ U(Y)-(Y-X_{\tau_Y}^L)] \right\} \hskip -0.2mm \ind_{Y<x}  \right]\hskip -1mm, \label{formule}
\end{equation}
which coincides with \eqref{pb} on $\{Y<x\}$ whenever $U(m) \equiv m$ (up to an additive constant). Intuitively, if this condition is satisfied in our tipping-point problem, it does not matter if an upward jump in $L$ causes $X^L$ (and thus $M^L$) to cross the tipping point $Y$ discontinuously, because the marginal values $U'(X^L_{\tau_Y-})$ and $U'(X^L_{\tau_Y})$ of the reserves before and after the jump are both equal to 1, the marginal value of an increment of extraction. By contrast, if, say, $U$ is concave and $U'(X^L_{\tau_Y}) > U'(X^L_{\tau_Y-}) \geq 1$, then, in our formulation of the problem, the dm is exposed to a risk that is absent in \cite{FerrariNeofytos}, because the continuation payoff at the tipping point is equal to $U(X^L_{\tau_Y})$ in \eqref{pb}, whereas it is equal to $U(Y) - (Y - X^L_{\tau_Y}) > U(X^L_{\tau_Y})$ in \eqref{formule}.

\subsection{A Verification Lemma}

Let $\CL$ be the infinitesimal generator of the uncontrolled diffusion $X$, acting on functions on $\CJ$ that are twice differentiable with respect to $x$:
\begin{align*}
\CL h (x,m) := \mu(x) \, \frac{\partial h}{\partial x}\,(x,m)+ \frac{1}{2}\, \sigma^2(x) \, \frac{\partial^2 h}{\partial x^2} \, (x,m), \quad (x,m)  \in  \mathcal J.
\end{align*}
By the dynamic-programming principle, we expect the value function $V$ of \eqref{markovv} to satisfy, in some sense, the HJB equation
\begin{align}
\label{HJB}
\max\big\{(\CL -r)V(x,m),F(m)-V_x(x,m) \big\}=0
\end{align}
subject to the boundary conditions
\begin{align}
V_m(m,m) &\ge U(m) f(m),  \label{diagonal1}
\\
V(m,m) &\ge T[V](m),  \label{diagonal2}
\end{align}
where the operator $T$ is defined on the set of continuous functions on $\CJ$ by
\begin{align}
\label{operatorT}
T[V](m)=\sup_{0\le h \le m} \big\{hF(m)+U(m-h)[F(m)-F(m-h)]+V(m-h,m-h) \big\}.
\end{align}
Condition (\ref{diagonal1}) is a standard Neumann condition which states that, starting from a point $(m,m)$ on the diagonal $\partial \CJ:=\{ (x,m) \in \CJ:  x=m\}$, a marginal decrease in the second argument yields at least an expected payoff $U(m) f(m)$, reflecting that $f(m) $ corresponds to the probability that the tipping point lies in the infinitesimal interval $[m- \mathrm dm, m]$. Condition (\ref{diagonal2}) is a special feature of our class of control problems. To grasp the intuition for (\ref{diagonal2}), consider a couple $(m,m)$ and choose a control such that $L_0=h>0$. This control must yield no more than the optimal control. Using Proposition \ref{markovprop} together with a dynamic-programming argument, we must thus have, for each $h \in(0,m]$,
\begin{align}  \label{geneneuman}
V(m,m) \geq  h F(m)+U(m-h)[F(m)-F(m-h)] + V(m-h,m-h).
\end{align}
The HJB equation (\ref{HJB}) and the Neumann condition (\ref{diagonal1}) imply that (\ref{geneneuman}) is satisfied at the limit when $h\to 0$.\footnote{To see this, simply note that (\ref{geneneuman}) is equivalent to $\frac{V(m,m) - V(m-h,m-h)}{h} \geq F(m) + U(m-h) \, \frac{F(m) - F(m-h)}{h}$. Taking the limit as $h \to 0$ yields $V_x(m,m) + V_m(m,m) \geq F(m) + U(m) f(m)$, which is indeed satisfied because $V_x(m,m) \geq F(m)$ and $V_m(m,m) \geq U(m) f(m)$ by (\ref{HJB})--(\ref{diagonal1}).} However, constructing a candidate solution for the singular control problem \eqref{markovv} also requires checking that (\ref{geneneuman}) is satisfied for all $h\in(0,m]$, thus introducing the non-local operator $T$ in the boundary condition (\ref{diagonal2}).

\paragraph{Remark}

Observe that the reverse inequality $V(m,m) \leq T[V](m) $ always holds by definition of $T[V](m)$ (substitute $h=0$ in the right-hand side of (\ref{diagonal2})). Thus we expect \eqref{diagonal1} to hold as an equality everywhere.

\vskip 3mm

Notice that, if indeed $V_x(x,m) \ge F(m)$ on $\CJ$ as prescribed by \eqref{HJB}, we have
\begin{align}
\label{operatorT-2}
V(x,m) \ge (x-x')F(m) +V(x',m) \text{ for all } x\geq x' \geq m > 0.
\end{align}
We now present our verification lemma, which is based on applying It\^o's formula for the processes $(X^L_t,M^L_t)$ evolving on the \textit{closed} set $\CJ$. Justifying It\^o's formula on a closed set requires precisely defining some class of functions, which we subsequently denote by $\mathcal {R}(\CJ)$, to which it applies. To streamline the presentation, we refer the reader to Appendix \ref{Appendix:class} for a rigorous definition of this class.

\begin{lemma}\label{verif}
If $w\in \mathcal R(\CJ)$ is a nonnegative solution to the HJB equation \eqref{HJB} with boundary conditions \eqref{diagonal1}--\eqref{diagonal2}$,$ then $w \ge V$ on $\CJ$.
\end{lemma}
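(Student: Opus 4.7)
The plan is to fix $(x,m) \in \CJ$, an admissible control $L \in \CA(x)$, and a localizing sequence $\tau_n \uparrow \tau_0$ of stopping times, and then apply the extension of It\^o's formula valid for functions of class $\CR(\CJ)$ (see Appendix \ref{Appendix:class}) to the process $e^{-rt} w(X^L_t, M^L_t)$. After rearranging and taking expectations to kill the Brownian martingale part by localization, this rewrites $w(x,m)$ as the sum of $\EE_{x,m}[e^{-r\tau_n}w(X^L_{\tau_n},M^L_{\tau_n})]$, minus $\EE_{x,m}[\int_0^{\tau_n} e^{-rs}(\CL-r)w(X^L_s,M^L_s)\,ds]$, plus $\EE_{x,m}[\int_{[0,\tau_n]} e^{-rs} w_x(X^L_s,M^L_s)\,dL^c_s]$, minus $\EE_{x,m}[\int_{[0,\tau_n]} e^{-rs} w_m(M^L_s,M^L_s)\,dM^{L,c}_s]$, minus the jump sum $\EE_{x,m}[\sum_{0\le s\le\tau_n} e^{-rs}(w(X^L_s,M^L_s)-w(X^L_{s-},M^L_{s-}))]$. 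The goal will be to bound each of these from below by the matching piece of \eqref{reward}, so as to obtain $w(x,m)\ge V(x,m;L)$ in the limit.

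The continuous pieces are handled directly by the HJB system. The generator integrand is nonpositive by $(\CL-r)w\le 0$, so its negative contributes nonnegatively; on the $dL^c$ integral, $w_x\ge F(m)$ yields $w_x(X^L_s,M^L_s)\,dL^c_s\ge F(M^L_{s-})\,dL^c_s$, matching the continuous-in-$L$ part of the first term of \eqref{reward}; on the $dM^{L,c}$ integral, which is supported on the diagonal, the Neumann-type inequality \eqref{diagonal1}, $w_m(m,m)\ge U(m)f(m)$, combined with $dM^{L,c}\le 0$, gives $-w_m(M^L_s,M^L_s)\,dM^{L,c}_s\ge -U(M^L_s)f(M^L_s)\,dM^{L,c}_s$, matching the second term of \eqref{reward}.

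The hard part is the jump sum. A jump $\Delta L_s>0$ sends $(X^L_{s-},M^L_{s-})$ to $(X^L_s,M^L_s)=(X^L_{s-}-\Delta L_s,\,M^L_{s-}\wedge X^L_s)$ and one needs a lower bound for $w(X^L_{s-},M^L_{s-})-w(X^L_s,M^L_s)$. If the jump stays above the diagonal, so $M^L_s=M^L_{s-}$, \eqref{operatorT-2} (a consequence of $w_x\ge F$) gives directly $w(X^L_{s-},M^L_{s-})-w(X^L_s,M^L_s)\ge F(M^L_{s-})\Delta L_s$, reproducing the jump contribution to the first term of \eqref{reward}. If the jump crosses the diagonal, so that $M^L_s<M^L_{s-}$ and $X^L_s=M^L_s$, I would split it in two: first the horizontal move from $(X^L_{s-},M^L_{s-})$ to $(M^L_{s-},M^L_{s-})$, bounded via \eqref{operatorT-2} by $(X^L_{s-}-M^L_{s-})F(M^L_{s-})$; then the diagonal jump from $(M^L_{s-},M^L_{s-})$ to $(M^L_s,M^L_s)$ of amplitude $h=M^L_{s-}-M^L_s$, bounded via \eqref{diagonal2} applied at $m=M^L_{s-}$ with this $h$ by $hF(M^L_{s-})+U(M^L_s)(F(M^L_{s-})-F(M^L_s))$. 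Summing and using the identity $\Delta L_s=X^L_{s-}-M^L_s$ in this case produces the combined lower bound $F(M^L_{s-})\Delta L_s+U(M^L_s)(F(M^L_{s-})-F(M^L_s))$, i.e.\ exactly the jump contributions to the first and third terms of \eqref{reward} combined.

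Dropping the nonnegative boundary term $\EE_{x,m}[e^{-r\tau_n}w(X^L_{\tau_n},M^L_{\tau_n})]$ and passing to the limit $n\to\infty$ by monotone convergence on the positive measures $dL$, $-dM^{L,c}$ and on the jump sums (and dominated convergence on the signed generator integral) then delivers $w(x,m)\ge V(x,m;L)$; taking the supremum over $L\in\CA(x)$ concludes. The main obstacle is the diagonal-crossing jump: the non-local condition \eqref{diagonal2} is exactly what is needed to splice the $dL$-contribution and the $\Delta M^L$-contribution without double-counting, and the two local bounds (\eqref{operatorT-2} for the horizontal piece and \eqref{diagonal2} for the diagonal piece) must recombine cleanly via $\Delta L_s=(X^L_{s-}-M^L_{s-})+(M^L_{s-}-M^L_s)$. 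A secondary technical point is justifying the It\^o expansion on the closed set $\CJ$ together with the integrability needed to discard the martingale part and apply convergence theorems as $n\to\infty$, which is exactly the purpose of the regularity class $\CR(\CJ)$ introduced in Appendix \ref{Appendix:class}.
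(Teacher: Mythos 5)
Your proposal is correct and follows essentially the same route as the paper's proof: the same It\^o decomposition with localization, the same use of $(\CL-r)w\le 0$, $w_x\ge F$, and \eqref{diagonal1} for the continuous parts, and the same splitting of diagonal-crossing jumps into a horizontal piece controlled by \eqref{operatorT-2} and a diagonal piece controlled by \eqref{diagonal2}, recombined via $\Delta L_s=-\Delta X^L_s$. No gaps to report.
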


\noindent \textbf{Proof.} For any control $L \in \CA(x)$, we use a standard localization procedure (\cite[Chapter 1, \S5, Proof of Theorem 5.8]{KaratzasShreve}) by introducing the increasing sequence of stopping times
\begin{align} \label{localiz}
T_n:=\inf\left\{ t \ge 0 : \int_0^{t\wedge \tau_0} \mathrm e^{-2rs} \sigma^2(X_s^L)w^2_x(X_s^L,M_s^L)\, \mathrm ds \ge n \right\}
\end{align}
to obtain that the process
\begin{align*}
\left(\int_0^{t\wedge \tau_0\wedge T_n} \mathrm e^{-rs} \sigma(X_s^L)w_x(X_s^L,M_s^L)\, \mathrm d B_s \right)_{t \ge 0}
\end{align*}
is a martingale. Applying It\^{o}'s formula and  taking expectations, we obtain
\begin{align}
w(x,m) & =  \EE_{x,m}\! \left[ \mathrm e^{-r (t\wedge \tau_0 \wedge T_n)} w(X^L_{t\wedge \tau_0\wedge T_n}, M^L_{t\wedge \tau_0 \wedge T_n}) \right] \notag
\\
& \quad  -  \EE_{x,m} \! \left[\int_0^{t\wedge \tau_0 \wedge T_n} \mathrm e^{-rs} ({\cal L}-r)w(X^L_{s},M^L_{s}) \, \mathrm ds \right] \notag
\\
& \quad - \EE_{x,m}\! \left[\int_{[0, {t\wedge \tau_0\wedge T_n}]} \mathrm e^{-rs} w_m (M^L_{s}, M^L_{s})\,\mathrm dM_s^{L,c} \right] \notag
\\
& \quad +\EE_{x,m}\! \left[ \int_{[0, {t\wedge \tau_0\wedge T_n}]}\mathrm e^{-rs} w_x (X^L_{s}, M^L_{s})\,\mathrm dL_s^c \right] \notag
\\
& \quad -  \EE_{x,m} \! \left[ \sum_{0 \le s \le t\wedge \tau_0 \wedge T_n} \mathrm e^{-rs} [w(X^L_s, M^L_s) - w(X^L_{s-} , M^L_{s-})] \right] \hskip -1mm, \label{ineq0}
\end{align}
where $M^{L,c}$ and $L^c$ denote the continuous parts of $M^L$ and $L$, respectively. Using \eqref{HJB}--\eqref{diagonal2} and observing that $w$ is nonnegative and $-\,\mathrm dM_s^{L,c}$ is a positive measure, we obtain
\begin{align*}
w(x,m)  \ge & -\EE_{x,m} \! \left[\int_{[0, {t\wedge \tau_0\wedge T_n}]} \mathrm e^{-rs} U(M^L_{s})f(M^L_{s})\, \mathrm dM_s^{L,c} \right]
\\
& +\EE_{x,m}\!\left[ \int_{[0, {t\wedge \tau_0\wedge T_n }]}\mathrm e^{-rs} F(M^L_{s})\, \mathrm dL_s^c \right]
\\
& -  \EE_{x,m}\! \left[ \sum_{0 \le s \le t\wedge \tau_0 \wedge T_n} \mathrm e^{-rs} [w(X^L_s, M^L_s) - w(X^L_{s-} , M^L_{s-})] \right] \hskip -1mm.
\end{align*}
Let us focus on the last term on the right-hand side of this formula. We have
\begin{align*}
w(X^L_s, M^L_s) - w(X^L_{s-} , M^L_{s-})=[w(X^L_s, M^L_s) - w(X^L_{s-} , M^L_{s-})](\ind_{M^L_s=M^L_{s-}}+\ind_{M^L_s<M^L_{s-}}).
\end{align*}
Using \eqref{HJB} and \eqref{operatorT-2}, we obtain
\begin{align}
[w(X^L_s, M^L_s) - w(X^L_{s-} , M^L_{s-})]\ind_{M^L_s=M^L_{s-}}&= [w(X^L_s, M^L_{s-}) - w(X^L_{s-} , M^L_{s-})]\ind_{M^L_s=M^L_{s-}}  \notag
\\
&\le -\,(X^L_{s-}-X^L_s)F(M^L_{s-})\ind_{M^L_s=M^L_{s-}}. \label{ineq1}
\end{align}
On the other hand, $X^L_s=M^L_s$ on $\{M^L_s<M^L_{s-}\}$ and thus, using \eqref{diagonal2} and \eqref{operatorT-2}, we obtain
\begin{align}
& [w(X^L_s, M^L_s) - w(X^L_{s-} , M^L_{s-})]\ind_{M^L_s<M^L_{s-}} \notag
\\
&=[w(M^L_s, M^L_s) -w(X^L_{s-} , M^L_{s-})]\ind_{M^L_s<M^L_{s-}} \notag
\\
&=[w(M^L_s, M^L_s) - w(M^L_{s-} , M^L_{s-})+w(M^L_{s-}, M^L_{s-}) - w(X^L_{s-} , M^L_{s-})]\ind_{M^L_s<M^L_{s-}} \notag
\\
&\le -\, \{(M^L_{s-}-M^L_{s})F(M^L_{s-}) +U(M^L_s)[F(M^L_{s-})-F(M^L_{s})] + (X^L_{s-} -M^L_{s-})F(M^L_{s-})\}\ind_{M^L_s<M^L_{s-}}\notag
\\
&= -\, \{(X^L_{s-}-X^L_{s})F(M^L_{s-}) +U(M^L_s)[F(M^L_{s-})-F(M^L_{s})] \}\ind_{M^L_s<M^L_{s-}}  . \label{ineq2}
\end{align}
Summing \eqref{ineq1}--\eqref{ineq2}, substituting in \eqref{ineq0}, and observing that $\Delta L_s=-\Delta X^L_s$, we obtain
\begin{align*}
w(x,m) \ge& -\EE_{x,m} \! \left[\int_{[0, {t\wedge \tau_0\wedge T_n }]} \mathrm e^{-rs} U(M^L_{s})f(M^L_{s})\, \mathrm dM^{L,c}_s \right] \allowdisplaybreaks
\\
&+ \EE_{x,m} \! \left[\int_{[0, {t \wedge \tau_0\wedge T_n}]} \mathrm e^{-rs} F(M^L_{s-}) \, \mathrm dL_s \right]
\\
&+\EE_{x,m} \! \left[ \sum_{0 \le s \le t\wedge \tau_0 \wedge T_n} \mathrm e^{-rs}U(M^L_s) [F(M^L_{s-})-F(M^L_{s})] \right]
\end{align*}
and thus, letting $n \to \infty$ and $t \to \infty$,
\begin{align*}
w(x,m) &\geq  \EE_{x,m} \! \left[ \int_{[0,\tau_0]} \mathrm e^{-rs} F(M^L_{s-}) \, \mathrm dL_s \right] \! -\EE_{x,m} \! \left[\int_{[0,\tau_0]} \mathrm e^{-rs}  U(M^L_{s}) F(M^L_{s}) \, \mathrm dM^{L,c}_s\right]
\\
& \quad +  \EE_{x,m} \!\left[\sum_{0 \leq s \leq \tau_0} \mathrm e^{-rs}  U(M^L_{s})[F(M^L_{s-}) - F({M^L_s})] \right]
\end{align*}
for all $L \in \mathcal A(x)$ by the monotone convergence theorem. The result follows. \hfill $\blacksquare$

\section{Optimal Resource Extraction}

In this section, we provide a complete solution to the two-dimensional singular control problem (\ref{markovv}). This solution is explicit given a free-boundary function $b$ that is characterized as the solution to an ode. We proceed as follows. In Section \ref{Assumptions}, we lay down the additional assumptions under which we solve (\ref{markovv}). In Section \ref{kt}, we study an auxiliary control problem in which the dm cannot extract below a known level $m\geq 0$ of the reserves at which he receives a terminal payoff $U(m)$; this problem, which was solved in \cite{SGL} for the case $m=0$, will prove useful to set up a free-boundary problem whose solution gives a candidate value function for (\ref{markovv}). In Section \ref{gva}, we derive this free-boundary problem and characterize the associated free-boundary function $b$. In Section \ref{solution}, we fully solve (\ref{markovv}).

\subsection{Assumptions} \label{Assumptions}

We shall work under Assumptions \ref{coefficient}--\ref{density} and the following ones.

\begin{Ass}\label{ass:A2}
The functions $\mu$ and $\sigma$ are $\mathcal C^1,$ with Lipschitz derivatives$,$ and satisfy
\begin{align*}
\mu(0)>rU(0) \text{ and } \sup_{x\geq 0} \, \mu'(x) < r .
\end{align*}
\end{Ass}

\begin{Ass}\label{ass:A6}
The function $U$ is nondecreasing$,$ concave$,$ $\mathcal C^2,$ with $U'(m) \geq 1$ for $m \leq u^* $ and $U'(m) =1$ for $m \geq u^*$ for some  $u^* \geq 0,$ and satisfies
\begin{align} \label{tard}
(\CL U -r) (m) > 0 \text{ for all } m \in [0, u^*).
\end{align}
\end{Ass}

\begin{Ass}\label{ass:A12}
The function $\mu$ is twice differentiable and concave on $[0, \infty),$ and the reverse hazard rate $\frac{f}{ F}$ of $Y$ is decreasing on its support $[0,\overline y]$.
\end{Ass}

Assumption \ref{ass:A2} is of the type used in \cite{SGL}  and allows us to explicitly solve the auxiliary problem (\ref{aux}). Assumption \ref{ass:A6} sets the features of  the continuation value function $U$, which satisfies the properties of the value function of a standard extraction problem modeled as a singular control problem, as, for example, in \cite{SGL}. In particular, the threshold $u^*$ denotes the optimal extraction threshold associated with the continuation extraction problem that the dm faces once the tipping point is crossed. Inequality (\ref{tard}) implies that, the later the tipping point is crossed on the interval $[0, u^*]$, the better off is the dm. We will accordingly say that a function $U$ satisfying Assumption \ref{ass:A6} is the value function of a \textit{downgraded} extraction problem, as is the case in Example \ref{example} below. Assumption \ref{ass:A12} states that the rate at which the reserves accumulates on average decreases with the level of reserves, and that the likelihood of reaching a tipping point, assuming this has not yet occurred, increases as the level of reserves decreases. These assumptions are standard in tipping-point models (\cite{Liski, Tsur1, Tsur2}).

\begin{lemma}\label{lem:existence_barx}
Under Assumptions {\rm \ref{ass:A2}--\ref{ass:A6}}$,$ there exists $\overline x >0$ such that $\mu(x) - r U(x) >0$  on $ [0, \overline x)$ and  $\mu(x) - r U( x)<0$ on $(\overline x,  \infty),$ $\overline x \geq u^*,$ and \eqref{tard} is also satisfied on $[u^*, \overline x)$ in case $\overline x > u^*$.
\end{lemma}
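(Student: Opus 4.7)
The plan is to study the one–variable function $\phi(x):=\mu(x)-rU(x)$ on $[0,\infty)$ and identify $\overline x$ as its unique sign change. Assumption \ref{ass:A6} says $U'(x)\ge 1$ for $x\le u^*$ and $U'(x)=1$ for $x\ge u^*$, so $U'\ge 1$ throughout $[0,\infty)$. Combined with Assumption \ref{ass:A2}, which gives $s:=\sup_{x\ge 0}\mu'(x)<r$, differentiation yields $\phi'(x)=\mu'(x)-rU'(x)\le s-r<0$, so $\phi$ is strictly decreasing (and continuous).

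Next I would pin down the two endpoints. The left end is immediate: $\phi(0)=\mu(0)-rU(0)>0$ by Assumption \ref{ass:A2}. For $x\to\infty$, integrating the bound $\mu'\le s$ gives $\mu(x)\le\mu(0)+sx$, while $U'(x)=1$ on $[u^*,\infty)$ together with $U\ge 0$ yields $U(x)\ge x-u^*$. Hence
\[
\phi(x)\le \mu(0)+ru^*-(r-s)x\xrightarrow[x\to\infty]{}-\infty.
\]
By continuity and strict monotonicity, the intermediate value theorem produces a unique $\overline x>0$ with $\phi(\overline x)=0$, $\phi>0$ on $[0,\overline x)$ and $\phi<0$ on $(\overline x,\infty)$, giving the first assertion.

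To show $\overline x\ge u^*$ I would exploit the $C^2$ regularity of $U$: since $U'\equiv 1$ on $[u^*,\infty)$, the continuous second derivative $U''$ must satisfy $U''(u^*)=0$. At $x=u^*$ this reduces $\mathcal L U(u^*)-rU(u^*)$ to $\mu(u^*)-rU(u^*)=\phi(u^*)$, and passing to the limit $m\uparrow u^*$ in \eqref{tard} gives $\phi(u^*)\ge 0$. Strict monotonicity of $\phi$ with $\phi(\overline x)=0$ then forces $\overline x\ge u^*$.

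Finally, for $m\in[u^*,\overline x)$ (a set which is empty unless the inequality $\overline x>u^*$ is strict), the identity $U'(m)=1$, $U''(m)=0$ reduces $\mathcal L U(m)-rU(m)$ to $\phi(m)$, which is $>0$ by the definition of $\overline x$, so \eqref{tard} extends to $[u^*,\overline x)$. There is no real obstacle; the only subtle point is the argument that $U''(u^*)=0$ (a direct consequence of $U\in C^2$ and $U'\equiv 1$ on $[u^*,\infty)$), which allows one to transfer the pointwise information in \eqref{tard} at $u^*$ to a statement about the sign of $\phi$.
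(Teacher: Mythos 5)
Your proof is correct and follows essentially the same route as the paper: both show $x\mapsto\mu(x)-rU(x)$ is strictly decreasing from a positive value at $0$ to $-\infty$, locate $\overline x$ as the unique zero, and use $U'(u^*)=1$, $U''(u^*)=0$ together with the limit of \eqref{tard} at $u^*$ to conclude $\overline x\ge u^*$ and extend \eqref{tard} to $[u^*,\overline x)$. The only (harmless) edge case left implicit in both arguments is $u^*=0$, where the limit in \eqref{tard} is vacuous but $\phi(0)>0$ holds directly by Assumption \ref{ass:A2}.
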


\noindent \textbf{Proof.} The mapping $x \mapsto \mu(x)-rU(x)$ is differentiable and decreasing because $U'\geq 1$ and $\sup_{x\geq 0} \mu'(x)= : \overline \mu <r$. Given that $\mu(0)-rU(0)>0$, the existence and uniqueness of $\overline x$ follows from the fact that, for $x \geq u^*$ large enough,
\begin{align*}
\mu(x)-rU(x) = \mu(x)-r[x-u^*+U(u^*)] \le \mu(0) + (\overline \mu-r)x -r[U(u^*)-u^*] <0
\end{align*}
as $\overline \mu <r$. Next, as $U$ is $\CC^2$ and satisfies $U'(u^*)=1$ and $U''(u^*)=0$, we have
\begin{align*}
\mu(u^*)-rU(u^*) = \mu(u^*)U'(u^*)+ \frac{\sigma^2(u^*)}{2}\,U''(u^*) - rU(u^*) = (\CL-r)U(u^*)  \geq 0,
\end{align*}
which implies $u^* \leq \overline x$ by definition of $\overline x$. Finally, if $x \in [u^*,\overline x)$, then $U'(x)=1$ and $U''(x)=0$ and thus
\begin{align*}
(\CL-r)U(x)=\mu(x)U'(x)+ \frac{\sigma^2(x)}{2}\,U''(x) - rU(x)= \mu(x)-rU(x) >0.
\end{align*}
The result follows. \hfill $\blacksquare$

\bigskip

The fact that inequality (\ref{tard}) is satisfied on $[0, \overline x)$ implies that, the later the tipping point is crossed on the interval $[0,\overline x]$, the better off is the dm. Specifically, for each $x < \overline x$ and for any stopping time $\tau$ such that the stopped process $(X_t)_{t \leq \tau}$ stays in $[0,\overline x]$, it follows from It\^{o}'s formula that $U(x)  \leq \EE_x \hskip 0.3mm [\mathrm e^{-r\tau } U(X_\tau)]$, with a strict inequality if $\PP_x \hskip 0.3mm [\tau>0]>0$.

The following is a key implication of the inequality $U' \geq 1$ in Assumption \ref{ass:A6}.

\begin{lemma}\label{pasdesaut}
Under Assumption \ref{ass:A6}$,$ if $w\in \mathcal R(\CJ)$ is a nonnegative solution to the HJB equation \eqref{HJB} with boundary condition \eqref{diagonal1}$,$ then $w$ also satisfies the boundary condition \eqref{diagonal2}.
\end{lemma}

\noindent \textbf{Proof.} For all $m >0$ and $h\in (0,m]$, we have
\begin{align*}
&w(m,m) - w(m-h,m-h)
\\
&=\int_{m-h}^m  [w_m(s,s) + w_x(s,s)] \, \mathrm ds
\\
& \geq   \int_{m-h}^m [ U(s) f(s) + F(s) ]\, \mathrm ds
\\
& =  U(m-h) [F(m) - F(m-h)] + \int_{m-h}^m \{[U(s)-U(m-h)] f(s)+  F(s)\} \, \mathrm ds
\\
& \geq  U(m-h) [F(m) - F(m-h)] +\int_{m-h}^m [sf(s)+F(s)]\, \mathrm ds -(m-h)[F(m)-F(m-h)]
\\
& =  U(m-h) [F(m) - F(m-h)] + hF(m),
\end{align*}
where the first inequality follows from \eqref{HJB}--\eqref{diagonal1}, the second inequality follows from $U' \geq 1$, and the last equality follows from $[sF(s)]'= sf(s) + F(s)$. The result follows. \hfill $\blacksquare$

\begin{example}
\label{example} Let $U$ be the value function of an optimal extraction problem as in {\rm \cite{SGL}}$,$ whose uncontrolled reserve process has the same volatility $\sigma$ as $X^0$ and a drift $\underline \mu$ such that $\underline \mu < \mu$ on $[0,\infty),$ and with terminal payoff $C$. Assume that an optimal control corresponding to an extraction threshold $u^*\geq 0$ exists$,$ that is$,$ {\rm \cite[Theorem 4.3]{SGL}} applies. Then $U$ is nondecreasing$,$ concave$,$ $\mathcal C^2,$ and satisfies $U(0)=C,$ $U'(m)\geq 1$ for $m\leq u^*,$ and $U'(m)=1$ for $m\geq u^*$. Moreover$,$ we have
\begin{align*}
\underline \mu(x)U'(x)+ \frac{\sigma^2(x)}{2}\,U''(x) - rU(x)=0 \text{ for all } x\in [0,u^*],
\end{align*}
which implies $(\CL-r)U > 0$ on $[0,u^*]$ as $U' \geq 1$ and $\underline \mu < \mu$. Thus Assumption {\rm \ref{ass:A6}} is satisfied.
\end{example}

\subsection{An Auxiliary Problem} \label{kt}

As a useful preliminary to the remainder of this section, consider the optimal-extraction problem when the dm cannot extract below a fixed and known level $m$ of the reserves, at which he receives a terminal payoff $U(m)$:
\begin{align} \label{aux}
V^m(x) := \sup_{L \in {\cal A}^m(x)} \EE_x \! \left [\int_{[0,\tau_{m}]} \mathrm e^{-rs} \, \mathrm dL_s + \mathrm e^{-r\tau_m}U(m) \right ] \hskip -1mm, \quad x \geq m,
\end{align}
where
\begin{align*}
\CA^m (x): = \big\{L \in \LL :  (X_{t-}^L-m)^+-(\Delta L)_t \ge 0 \text{ for all } t \ge 0 \big\}.
\end{align*}
Our next result, the proof of which is provided in the Appendix for the sake of completeness, is a direct consequence of \cite[Theorem 4.3]{SGL}.

\begin{proposition} \label{bench}
The following holds$:$
\begin{itemize}

\item[(i)]

If $m\geq \overline x,$ then the value function $V^m$ of problem \eqref{aux} is given by
\begin{align}
V^m(x) = x - m + U(m) \text{ for all } x\geq m, \label{benchover}
\end{align}
and the policy of initially extracting $x -m$ is optimal.

\item[(ii)]

If $m< \overline x,$ then the value function $V^m$ of problem \eqref{aux} is concave$,$ $\CC ^2,$ and satisfies $V^m(m) = U(m)$ and the HJB equation
\begin{align} \label{HJBbench}
\max\big\{(\CL-r)V^m(x), 1 - V^m_x(x) \big\} =0 \text{ for all } x\geq m.
\end{align}
Moreover$,$
\begin{align}\label{eq:formulaVm}
V^m (x)= \begin{cases}
A^m \psi(x) + B^m \phi(x) & \text{if }\, m\leq x \leq \eta(m)
\\
x - \eta(m) + \frac{\mu(\eta(m))}{r} &\text{if }\,  x \geq \eta(m)
\end{cases}
\end{align}
for $\psi$ and $\phi$ two positive fundamental solutions to the ode $\mathcal L u - ru=0,$ respectively increasing and decreasing$,$ and
\begin{align}
A^m := \frac{\phi(m) - U(m) \phi'(\eta(m))}{D(\eta(m),m)}, \label{Aaux}
\\
B^m := \frac{U(m) \psi'(\eta(m)) - \psi(m)}{D(\eta(m), m)}, \label{Baux}
\end{align}
with
\begin{align}
D(x,m): = \psi'(x) \phi(m) - \phi'(x) \psi(m) >0 \label{aux1}
\end{align}
for all $x \geq m \geq 0,$ and $\eta(m) \in (m,\overline x)$ defined as the unique solution in $x$ to
\begin{align} \label{aux2}
N(x,m) : =\phi(x)\psi(m)-\psi(x)\phi(m)+ \frac{\mu(x)}{r}\,D(x,m) - D(x,x)U(m) = 0.
\end{align}

\item[(iii)]

The process $L^m \in \CA^m(x)$ such that $(\Delta L^m)_0 = [x-\eta(m)]^+$ and $(X^{L^m},L^m)$ is the unique solution to
\begin{align} \label{eq:def_Lm}
\left\{ \begin{array} {rcl}
\mathrm dX^{L^m}_t \hskip -2.5mm  &=& \hskip -2mm \mu(X^{L^m}_t)\, \mathrm dt+ \sigma(X^{L^m}_t)\, \mathrm dB_t-\mathrm dL^m_t, \quad X^{L^m}_ {0-}=x
\\
L^m_t \hskip -2mm & =& \hskip -2mm \sup_{0\leq s \leq t} \big[x+ \int_0^s \mu(X^{L^m}_u) \,\mathrm du + \int_0^s \sigma(X^{L^m}_u) \,\mathrm dB_u - \eta(m)\big]^+
\end{array}
\right.
\end{align}
is an optimal extraction process.

\item[(iv)]

Letting $x^0:=\eta(0),$ the mapping $\eta: [0, \overline x] \to [x^0, \overline x]:m \mapsto \eta(m)$ is increasing$,$ $\CC^1,$ and satisfies $\eta(m) > m$ for all $m \in [0,\overline x)$ and $\eta(\overline x) = \overline x$. Moreover$,$ for each $m\in [0, \overline x],$ $N(x,m)>0$ for all $x <\eta(m)$ and $N(x,m)< 0$ for all $x > \eta(m)$.
\end{itemize}
\end{proposition}

Claim (i) states \pagebreak that, if $m$ is larger than the threshold $\overline x$ defined in Lemma \ref{lem:existence_barx}, then a lump-sum extraction is optimal; that is, the dm is better off extracting immediately the reserves all the way down to $m$ in order to get the terminal payment $U(m)$ right now. Claim (ii) states that, if $m$ is below the threshold $\overline x$, then the dm optimally allows the reserves to grow up to the extraction threshold $\eta(m)$ and extracts any reserve above $\eta(m)$, causing the optimally controlled reserve process $X^{L^m}$ to be reflected downwards at $\eta(m)$. Claim (iii) provides useful and intuitive properties of the extracting threshold $\eta(m)$; intuitively, the higher $m \in [0, \overline x]$ is, the higher the reserve requirement $\eta(m)$ before extraction becomes optimal, and the equality $\eta (\overline x) = \overline x$ reflects the fact that, if the initial level of reserves is larger than $\overline x$, then the dm optimally extracts the surplus $x - \overline x >0$.

\subsection{A Free-Boundary Problem} \label{gva}

To solve the singular control problem \eqref{markovv}, we use a standard guess-and-verify approach. We guess that, when the reserves are above a threshold that depends on its running minimum, it is optimal to extract. Furthermore, we guess from Proposition \ref{bench} that, if $m$ is sufficiently large, say, larger than a threshold $\overline m$, it is optimal to extract immediately the surplus $x - \overline m >0$. Therefore, we conjecture the
existence of a free-boundary function $b:[0, \overline m] \mapsto [0, \overline m]$, with $b(m) \geq m$, separating the inaction region
\begin{align*}
\{(x,m) \in \CJ:  m \leq \overline m \text{ and } m \leq x<b(m)\}
\end{align*}
from the extraction region
\begin{align*}
\{(x,m)\in \CJ : m> \overline m \text{ or } ( m\in [0, \overline m] \text{ and } x \ge b(m) )\}.
\end{align*}
where the control process $L$ is activated. We also guess that $b(m) \leq \eta (m)$ for all $m \in [0, \overline m]$: intuitively, uncertainty on the tipping point $Y \leq m$ should make the dm less cautious and increase his value compared to the case where the dm knows for certainty that extraction will end with certainty when the reserves hit the level $m$. We also expect that $b$, like $\eta$, is nondecreasing, satisfies $b(m) \geq m$ for all $m \in [ 0, \overline m]$, with $b(0) = x^0$ and $b (\overline m) = \overline m \leq \overline x$. Finally, because the dm learns nothing about the advent of a tipping point when the running minimum $m$ is larger than $\overline y$, the upper bound of the support of $Y$, it is natural to conjecture that the function $b$ is constant on $[\overline y, \infty)$.

Relying on the verification Lemma \ref{verif}, we look for a solution to the HJB equation \eqref{HJB} satisfying the boundary conditions \eqref{diagonal1}--\eqref{diagonal2}. More precisely, we aim at finding a pair of functions $(W,b)$ with $W\in \mathcal{R}(\CJ)$ and a threshold $\overline{m} > x^0$ that solve the following free-boundary problem:
\begin{align}
b(0) & =  x^0, \label{c1}
\\
b(\overline m) & =  \overline m, \label{m} \allowdisplaybreaks
\\
{\cal L}W(x,m) - r W(x,m) &= 0,  \quad 0< m \le \overline{m}  \mbox{ and } 0< m <x <b(m), \label{vs1}
\\
W_x (x, m) &=  F(m),  \quad  0 < m \leq \overline m   \mbox{ and } x \geq b(m)  \label{vs2}
\\
 W_m (m,m) & = U(m)f(m), \quad 0 < m \leq \overline m, \label{vs2'}
 \\
 W_m (m,m) &\ge U(m)f(m), \quad m > \overline{m},  \label{vs2''}
 \\
 W(m,m)&\ge T[W](m), \quad m >0,  \label{vs2''''}
 \\
W_{xx}(b(m),m)&= 0, \quad 0 < m\leq \overline m . \label{vs3}
\end{align}
We show below that, if a solution to (\ref{c1})--(\ref{vs3}) exists, then the function $b$ describing the free boundary satisfies an ode. We first observe that a solution to \eqref{vs1} writes as
\begin{align*}
W(x,m) = A(m) \psi ( x) + B(m) \phi(x),
\end{align*}
for some functions $A$ and $B$. Then, (\ref{vs2}) and (\ref{vs3}) yield
\begin{align}
A(m) & =  \frac{F(m)\phi^{\prime\prime} (b(m))}{\psi'(b(m)) \phi^{\prime\prime}(b(m)) - \phi'(b(m))\psi^{\prime\prime}(b(m))}, \label{A}
\\
B(m) &=  \frac{- F(m)\psi^{\prime\prime} (b(m))}{\psi'(b(m)) \phi^{\prime\prime}(b(m)) - \phi'(b(m))\psi^{\prime\prime}(b(m)) }. \label{B}
\end{align}
Notice that $\phi$ and $\psi$ satisfy
\begin{align}\label{eq:ODE_phi_psi_1}
u''  =\frac{2}{\sigma^2} \,(r u  - \mu  u' )
\end{align}
on $[0, \infty)$. Letting $D(x) : = \psi'(x) \phi(x) - \phi'(x) \psi(x)$, a direct computation leads to
\begin{align}
A(m) & =  \frac{F(m)}{D(b(m))}\left [\phi (b(m)) - \frac{\mu(b(m))}{r}\,\phi'(b(m))\right ] \hskip -1mm, \label{A1}
\\
B(m)  &=  \frac{-  F(m)}{D(b(m))} \left [\psi(b(m)) - \frac{\mu(b(m))}{r} \, \psi'(b(m)) \right ] \hskip -1mm. \label{B1}
\end{align}
Taking the derivatives with respect to $m\in (0, \overline m \wedge \overline y)$ yields
\begin{align*}
A'(m) & =  \frac{b'(m)F(m)}{D(b(m))}  \left\{ \phi'(b(m)) - \frac{\mu(b(m))}{r} \,\phi^{\prime\prime}(b(m)) - \frac{\mu'(b(m))}{r} \,\phi'(b(m)) \right.
\\
& \left. \quad \quad \quad \quad\quad\quad\quad -\, \frac{D'(b(m))}{D(b(m))}\left[\phi(b(m)) - \frac{\mu(b(m))}{r} \, \phi'(b(m))\right]\right\}
\\
 &  \quad  +  \frac{f(m)}{D(b(m))} \left[\phi(b(m)) - \frac{\mu(b(m))}{r}\,\phi'(b(m))\right] \hskip -1mm,
 \\
B'(m) & =  \frac{- b'(m)F(m)}{D(b(m))} \left\{ \psi'(b(m)) - \frac{\mu(b(m))}{r} \,\psi^{\prime\prime}(b(m)) - \frac{\mu'(b(m))}{r} \,\psi'(b(m)) \right.
\\
& \left. \quad \quad \quad \quad\quad\quad\;\;\,\quad -\, \frac{D'(b(m))}{D(b(m))}\left[\psi(b(m)) - \frac{\mu(b(m))}{r} \, \psi'(b(m))\right]\right\}
\\
 &   \quad  + \frac{f(m)}{D(b(m))} \left[\psi(b(m)) - \frac{\mu(b(m))}{r}\,\psi'(b(m))\right] \hskip -1mm.
\end{align*}
Using again \eqref{eq:ODE_phi_psi_1} and the resulting relation $\frac{D'(b(m))}{D(b(m))}= - \frac{2 \mu(b(m))}{\sigma^2(b(m))}$, we obtain
\begin{align*}
&A'(m)
\\
&= \frac{1}{D(b(m))} \left\{b'(m) F(m) \!\left[1 - \frac{\mu'(b(m))}{r}\right]\! \phi'(b(m)) + f(m) \! \left[\phi(b(m)) - \frac{\mu(b(m))}{r} \,\phi'(b(m))\right] \right\} \hskip -0.5mm,
\\
&B'(m)
\\
& =  \frac{-1}{D(b(m))} \left\{b'(m) F(m) \!\left[1 - \frac{\mu'(b(m))}{r}\right]\! \psi'(b(m)) + f(m) \! \left[\psi(b(m)) - \frac{\mu(b(m))}{r} \,\psi'(b(m))\right] \right\} \hskip -0.5mm.
\end{align*}
Substituting these formulas into \eqref{vs2'} yields the following ode:
\begin{align} \label{ode1}
b'(m)  = E(b(m),m), \quad  m \in (0,\overline m\wedge \overline y),
\end{align}
where the vector field $E$ on $(0, \infty)^2$ is defined by
\begin{align} \label{vecfield}
E(x,m) : =\frac{ H(m) N(x,m)}{G(x) D(x,m)},
\end{align}
where $D(x,m)$ and $N(x,m)$ are given by \eqref{aux1}--\eqref{aux2}, $H(m) := \frac{f(m)}{F(m)}$, and $G(x) := 1 - \frac{\mu'(x)}{r}$. Extending artificially $b$ to $[0,\overline x]$ and taking into account that $b$ is constant beyond $\overline y$, we are thus led to the Cauchy problem of finding a function $b: [0,\overline x] \rightarrow [0,\overline x]$ such that
\begin{align}\label{ode2}
\left\{ \begin{array} {rcll}
b \hskip -2mm &\in&  \hskip -2mm \CC([0,\overline x]) \cap \CC^1((0,\overline x \wedge \overline y))
\\
b(0) \hskip -2mm  &=&\hskip -2mm  x^0
\\
b'(m)\hskip -2mm  &=&\hskip -2mm  E(b(m),m), \quad m \in(0,\overline x \wedge \overline y)
\\
b(m) \hskip -2mm &=& \hskip -2mm  b(\overline x \wedge \overline y), \quad m \in (\overline x \wedge \overline y, \overline x \vee \overline y]
\end{array} \right.\hskip -2mm  .
\end{align}
The vector field $E$ in \eqref{vecfield} is well-defined on $(0, \infty)^2$ because $F> 0$ on $(0,\infty)$, $D >0$ on $(0, \infty)^2$, and $G >0$ on $(0, \infty)$ by Assumption \ref{ass:A2}; notice also that $E$ vanishes for $m > \overline y$. Because $H$ is locally Lipschitz on $(0,\overline y]$ by Assumption \ref{density}, $E$ is locally Lipschitz on $(0, \infty)\times (0,\overline y]$. However, $E$ cannot be continuously extended to $(0, \infty) \times [0,\overline y]$ as  $\lim_{m \to 0} H(m) = \infty$, which implies from Proposition \ref{bench}(iii) that, for each $x > 0$ with $x\neq x^0$, $\lim_{m \to 0} E(x,m) = \infty$. Thus the Cauchy problem \eqref{ode2} requires a specific analysis, to which we now turn.

\begin{proposition} \label{lemedo}
There exists a unique solution $b$ to the Cauchy problem \eqref{ode2}. This solution is nondecreasing and satisfies $b(m) < \eta(m)$ for all $m \in (0, \overline{x}]$. Moreover$,$ the point $\overline{m}:=\min \hskip 0.5mm \{m \in [0,\overline x] : b(m)=m\} $ is well-defined$,$ belongs to $(0, \overline{x}),$ and is such that $b'(\overline m) <1$.
\end{proposition}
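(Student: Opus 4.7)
The plan is to treat the Cauchy problem \eqref{ode2} as essentially a standard ODE, with the two substantive obstacles being the singularity of the vector field $E$ at $m=0$ (due to $H(m)\to\infty$) and the strict inequality at the boundary point $\overline m$. On the open region $(0,\infty)\times (0,\overline y]$, $E$ is locally Lipschitz by Assumption \ref{ass:A12}, so standard Cauchy--Lipschitz theory provides, for any $m_0>0$ and any initial value, a unique maximal $C^1$ solution; the extension of $b$ to $(\overline x \wedge \overline y,\overline x \vee \overline y]$ is by the constant prescription in \eqref{ode2}. The delicate point is thus existence and uniqueness at $m=0$ with $b(0)=x^0$.

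Near $(x^0,0)$, I would exploit the key geometric fact that $N(\eta(m),m)=0$ (hence $E(\eta(m),m)=0$) while $N(x,m)>0$ for $x<\eta(m)$ by Proposition \ref{bench}(iii). Setting $\delta(m):=\eta(m)-b(m)$, a first-order expansion in the first argument of $N$ yields $N(b(m),m) = |N_x(\eta(m),m)|\,\delta(m) + o(\delta(m))$, so that
\begin{equation*}
\delta'(m)=\eta'(m)-E(b(m),m)\;\approx\; \eta'(m)-C(m)\,\delta(m),
\end{equation*}
with $C(m)\sim H(m)\to+\infty$ as $m\to 0^+$. A stiff ODE of this form with bounded inhomogeneity admits a unique nonnegative solution attached to $\delta(0)=0$, with asymptotic $\delta(m)\sim \eta'(m)/C(m)$. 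I would make this rigorous by a contraction-mapping argument on the integral form $b(m)=x^0+\int_0^m E(b(s),s)\,ds$ in a weighted space, or equivalently by a change of variable regularising the singularity.

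For monotonicity and the bound $b\le \eta$, I would first note that $\eta$ itself is \emph{not} a solution of \eqref{ode2}: $E(\eta(m),m)=0$ whereas $\eta'(m)>0$ by Proposition \ref{bench}(iii). Combined with $b(0)=\eta(0)=x^0$ and the local analysis above, which yields $b(m)<\eta(m)$ for small $m>0$, a standard continuity argument rules out any subsequent contact (were $m_1$ the first such contact, approaching from below would give $b'(m_1^-)=E(\eta(m_1),m_1)=0<\eta'(m_1^-)$, a contradiction). Once $b\le \eta$ is established on $[0,\overline y]$, the sign property of $N$ implies $E(b,m)\ge 0$, so $b'\ge 0$ and $b$ is non-decreasing on $[0,\overline x]$. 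The existence of $\overline m\in(0,\overline x)$ then follows by applying the intermediate value theorem to $g(m)=b(m)-m$: $g(0)=x^0>0$, while $g(\overline x)<0$ since $b(\overline x)<\eta(\overline x)=\overline x$ (strict inequality because $b<\eta$ throughout $(0,\overline y]$ and $b$ is constant beyond $\overline y$).

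The strict inequality $b'(\overline m)<1$ is the most delicate point and, I expect, the main obstacle. Using $N(m,m)=D(m,m)\bigl(\mu(m)/r-U(m)\bigr)$, one obtains
\begin{equation*}
E(\overline m,\overline m)=\frac{H(\overline m)}{G(\overline m)}\left(\frac{\mu(\overline m)}{r}-U(\overline m)\right),
\end{equation*}
which is positive since $\overline m<\overline x$ forces $\mu(\overline m)-rU(\overline m)>0$ by Lemma \ref{lem:existence_barx}. The minimality of $\overline m$ already gives $b'(\overline m)\le 1$ because $b(m)>m$ for $0<m<\overline m$ with $b(\overline m)=\overline m$. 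To upgrade to strict inequality, I would argue by contradiction: assuming $b'(\overline m)=1$, differentiating the ODE at $\overline m$ yields $b''(\overline m)=E_x(\overline m,\overline m)+E_m(\overline m,\overline m)$, and an explicit computation of $E_x+E_m$ on the diagonal, exploiting \eqref{eq:ODE_phi_psi_1} satisfied by $\phi,\psi$ together with Assumption \ref{ass:A12} (concavity of $\mu$ and MHRP of $F$) and Assumption \ref{ass:A6}, should produce $b''(\overline m)<0$ under the normalisation $E(\overline m,\overline m)=1$, contradicting $b(m)>m$ for $m$ just below $\overline m$.
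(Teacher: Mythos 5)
Your architecture is sound and you correctly isolate the two genuine difficulties (the singularity of $E$ at $m=0$ and the strictness of $b'(\overline m)<1$), but at both points the technique you propose either does not close or requires hypotheses the paper does not make. For the singular Cauchy problem at $m=0$, the weighted-space contraction you sketch is exactly borderline and cannot work as stated. Writing $E(x,m)=H(m)Q(x,m)$ with $Q=\tfrac{N}{GD}$, one has from \eqref{eq:deriv_g_neq0} that $N_x(\eta(m),m)=(\tfrac{\mu'(\eta(m))}{r}-1)D(\eta(m),m)$, hence $Q_x(\eta(m),m)=-1$ identically, while $mH(m)=mf(m)/F(m)\to 1$ as $m\to0$ since $f(0)>0$. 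So the Lipschitz constant of $x\mapsto E(x,m)$ near $x^0$ behaves exactly like $1/m$ with constant $1$, and in the natural weighted norm $\sup_m m^{-1}|b(m)-x^0|$ the integral operator $b\mapsto x^0+\int_0^m E(b(s),s)\,ds$ has Lipschitz constant exactly $1$: your linearised equation is $\delta'+\delta/m\approx\eta'$, whose homogeneous solutions decay like $m'/m$, not geometrically. What actually rescues existence and uniqueness is the \emph{sign} structure, namely that $x\mapsto E(x,m)$ is decreasing on $[m,\eta(m)]$ (Lemma \ref{forodegenegood}(ii), a nontrivial computation using Assumptions \ref{ass:A2}--\ref{ass:A12}): the paper builds $b$ as a monotone limit of solutions started at $m_n\downarrow 0$ with value $x^0$, and gets uniqueness from the one-sided estimate $(\tilde b-b)(\hat E(\tilde b,\cdot)-\hat E(b,\cdot))\le 0$, which forces $(\tilde b-b)^2$ to be non-increasing from $0$. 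You use this monotonicity only implicitly through the sign of $-C(m)\delta$; without it your argument has no engine.

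For $b'(\overline m)<1$, your second-order expansion requires $b''(\overline m)$, hence differentiability of $m\mapsto E(m,m)=H(m)\tfrac{\mu(m)/r-U(m)}{G(m)}$, hence differentiability of $f$ — but Assumption \ref{density} only makes $f$ Lipschitz. The quantity you want to compute is in any case $E_x+E_m=\tfrac{d}{dm}E(m,m)$, and its strict negativity is precisely Lemma \ref{forodegenegood}(i) (the diagonal simplifies to $N(m,m)/D(m,m)=\mu(m)/r-U(m)$, so \eqref{eq:ODE_phi_psi_1} is not needed here); the paper avoids second derivatives altogether by a squeeze argument: assuming $E(\overline m,\overline m)=1$, a second solution started strictly inside the region $\{m<x<b(m)\}$ is trapped between the diagonal (it cannot touch it before $\overline m$ since $E(\widehat m,\widehat m)>1$ there) and $b$, hence hits $(\overline m,\overline m)$ and must coincide with $b$ by backward uniqueness — a contradiction. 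Your route can be repaired with one-sided difference quotients and the integral form of the ODE, but as written both the deferred computation ("should produce $b''(\overline m)<0$") and the regularity it presupposes are gaps. Two smaller points: your claim $b(\overline x)<\overline x$ is only immediate when $\overline y<\overline x$; when $\overline y\ge\overline x$ the paper rules out $\overline m=\overline x$ by noting that a solution with $b(\overline x)=\overline x$ would have to stay above $\overline x$ backward in $m$ (since $\hat E\le 0$ there), contradicting $b(0)=x^0<\overline x$. And the case $\overline m>\overline y$, where $b'(\overline m)=0$ trivially, should be treated separately as the paper does.
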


\noindent \textbf{Proof.} The proof consists of three parts.

\subparagraph{Part I: Existence}

To avoid dealing with several cases depending on the value of $\overline y$, we consider an auxiliary well-posed ode. Let $\hat f$ denote a Lipschitz positive function on $[0,\infty)$ that coincides with $f$ on $[0,\overline y]$, and let $\hat F(m):=\int_0^m \hat f(t) \, \mathrm dt$  and $\hat H(m): =\frac{\hat f(m)}{\hat{F}(m)}$ for all $m>0$. Define then the vector field $\hat E$ on $(0,\infty)^2$ by
\begin{align*}
\hat E(x,m) :=\frac{ \hat H(m)N(x,m) }{G(x) D(x,m)},
\end{align*}
which is locally Lipschitz on $(0,\infty)^2$ and coincides with $E$ on $(0,\infty)\times (0,\overline y]$. Notice that, because $\hat H$, $D$, and $G$ are positive, the sign of $\hat E$ coincides with that of $N$, so that, for all $x>0$ and $m\in (0,\overline x]$, $\hat E(x,m)\gtreqless 0$ if $x \lesseqgtr \eta(m)$ by Proposition \ref{bench}(iv). For all $m_0 \in (0,\overline x]$ and $x_0> m_0$, we let $b_{x_0, m_0}$ be the unique maximal solution to the ode $b'(m) = \hat E(b(m), m)$ such that $b(m_0)=x_0$, whose existence follows from the Cauchy--Lipschitz theorem. The proof that there exists a solution to the Cauchy problem \eqref{ode2} now consists of two steps.

\subparagraph{Step 1}

For each $m_0 \in (0,\overline x]$, let $(x_n)_{n \geq 1}$ be a decreasing sequence such that $x_n \to \eta(m_0)$. As $\eta$ is increasing by Proposition \ref{bench}(iv) and $\hat E(x_n ,m_0)<0$ as $x_n > \eta(m_0)$, it follows from standard arguments that, for each $n \geq 1$, $b_{x_n, m_0}$ is decreasing over $(0, m_0]$. From the non-crossing property of the solutions to the ode $b'(m) = E(b(m),m)$, $(b_{x_n, m_0})_{n\geq 1}$ is a decreasing sequence of decreasing functions bounded below by $\eta(m_0)$ over $(0, m_0]$. Over this interval, it thus admits a nonincreasing pointwise limit $\underline b : (0, m_0] \to [\eta(m_0),\infty)$ such that $\underline b(\eta(m_0)) =  \eta(m_0)$. For all $n\geq 1$ and $m\in( 0, m_0]$, we have
\begin{align*}
b_{x_n, m_0 } (m) =  x_n - \int_m^{m_0} \hat E(b_{x_n, m_0 }(s),s) \, \mathrm ds.
\end{align*}
Because $b_{x_n, m_0 }(s) \in [\eta(m_0), b_{x_1, m_0 }(m)]$ for all $n\geq 1$ and $s \in [m,m_0]$ and $E$ is continuous over the compact domain $[\eta(m_0), b_{x_1, m_0 }(m)] \times [m , m_0]$, it follows by bounded convergence that
\begin{align*}
\underline b (m) =  \eta(m_0) - \int_m^{m_0} \hat E(\underline b (s),s) \, \mathrm ds.
\end{align*}
Because this is true for all $m \in (0,m_0]$, we deduce that $\underline b$ is the restriction to $(0, m_0]$ of $b_{\eta(m_0), m_0}$; in particular, $b_{\eta(m_0), m_0}$ is nonincreasing over $(0, m_0]$, and thus $b_{\eta(m_0), m_0}(m) \geq \eta(m_0)$ for all $m \in (0,m_0]$. Geometrically, as $\eta$ is increasing by Proposition \ref{bench}(iv), the upshot of this discussion is that, in the $(x,m)$-space, the integral curves of the ode $b'(m) = \hat E(b(m), m)$ can cross the curve $\{(x,m) : 0< m \leq \overline x \text{ and } x = \eta(m) \}$ at most once, from below.

\subparagraph{Step 2}

Now, let $(m_n)_{n \geq 1}$ be a decreasing sequence such that $m_1 < \eta(0)$ and $ m_ n \to 0$. Because $\eta$ is increasing and $\eta(0)=x^0$ by definition, $b_{x^0, m_n}(m_n) = x^0 < \eta(m_n)$ for all $n \geq 1$. It thus follows from Step 1 that, for each $n \geq 1$, $b_{x^0,m_n}$ is well-defined on a neighborhood of $[m_n, \overline x]$ and is increasing and strictly bounded from above by $\eta$ on $[m_n,\overline x)$. Let then
\begin{align*}
b_n(m) :=  \begin{cases} x^0 &  \text{if } \, m \in [0, m_n) \\ b_{ x^0, m_n}(m)  & \text{if } \, m \in [m_n, \overline x] \end{cases}, \quad n \geq 1, \, m \in [0, \overline x].
\end{align*}
By construction, $(b_n)_{n\geq 1}$ is a nondecreasing sequence of positive nondecreasing functions on $[0, \overline x]$ bounded above by $\eta$. It thus admits a pointwise limit $\overline b$ such that $\overline b(0) =  x^0$. For all $n\geq 1$ and $m \in [0, \overline x]$, we have
\begin{align*}
b_n(m) =  x^0 + \int_0^m \hat E(b_n(s),s) \ind_{s \geq m_n} \, \mathrm ds.
\end{align*}
In particular, for $0 < m' < m < \overline x$,
\begin{align*}
b_n(m) - b_n(m') = \int_{m'}^m \hat E(b_n(s),s) \ind_{s \geq m_n} \, \mathrm ds.
\end{align*}
Because $b_n(s) \in [x^0, \eta(s)]$ for all $n\geq 1$ and $s \in [m',m]$ and $E$ is continuous over the compact domain $\{(x,s) : s \in [m',m] \text{ and } x \in [x^0, \eta(s)]\}$, it follows by bounded convergence that
\begin{align}
\overline b(m) - \overline b(m') = \int_{m'}^m \hat   E( \overline b(s), s)  \, \mathrm ds, \label{bode'}
\end{align}
Because $\overline b $ is bounded above by $\eta$, we have $\overline b(0+)\le \eta(0)=x^0$, so that $\overline b$ is continuous at $0$. Letting $m' \to 0$ in (\ref{bode'}), it follows by monotone convergence that
\begin{align} \label{soledo}
\overline b(m) -  \overline b(0) = \int_0^m \hat E(\overline b(s), s) \, \mathrm ds <\infty,
\end{align}
which shows that $\overline b$ satisfies the ode $b(m)=\hat E(b(m),m)$ on $(0,\overline x)$. Thus the function $b:[0,\overline x] \to [0,\overline x]$ defined by
\begin{align*}
b(m) : =\begin{cases}
\overline b(m) & \text{if }\,  m \in [0,\overline x\wedge \overline y]
\\
\overline b(\overline x\wedge \overline y) & \text{if }\, m \in (\overline x \wedge \overline y, \overline x \vee \overline y]
\end{cases}
\end{align*}
is a solution to the Cauchy problem \eqref{ode2}.

\subparagraph{Part II: Uniqueness}

Suppose, by way of contradiction, that $b$ and $\tilde b$ are different solutions to \eqref{ode2}. Because $E$ is locally Lipschitz on $(0, \infty)\times (0,\overline y]$, this implies that there exists $m >0$ such that, say, $\tilde b (s)> b (s)$ for all $s \in (0,m]$. With no loss of generality, suppose that $m\in (0, \overline y \wedge \overline x)$ is sufficiently small that $b(m)>m$. From \eqref{soledo}, we have
\begin{align} \label{unique}
\tilde b(m) -  b(m) = \int_0^m [\hat E(\tilde b(s), s) - \hat E(b(s),s)]\, \mathrm ds.
\end{align}
The left-hand side of \eqref{unique} is positive. However, by Lemma \ref{forodegenegood}(ii) in Appendix \ref{AuxLemmas}, the mapping $x\mapsto \hat E(x,s)$ is decreasing on $[s,\eta(s)]$ for all $s \in (0,m]$. As $\eta(s) > \tilde b(s) >b(s) >s$ for all $s \in (0,m]$, it follows that the right-hand side of \eqref{unique} is negative, a contradiction. Thus the solution to the Cauchy problem \eqref{ode2} is unique.

\subparagraph{Part III: Properties of $\boldsymbol{\overline m}$}

By the intermediate value theorem, $\overline m$ is well-defined as $b(0)=x^0>0$ and $b(\overline x)< \eta(\overline x)=\overline x$ by Step 1; by continuity, it must be that $b(\overline m) = \overline m < \overline x$. To complete the proof, we must only show that $b'(\overline m) <1$. We distinguish two cases.

\subparagraph{Case 1}

If $\overline m >\overline y$, then it must be that $\overline y <\overline x$ and thus $\overline m = b(\overline y)\leq \eta (\overline y)<\overline x$ and $b'(\overline m) =0<1$, as desired.

\subparagraph{Case 2}

If $\overline m \leq \overline y$, \pagebreak then, because $b(m)\geq m$, it must be that $b'(\overline m)  \leq 1$ by definition of $\overline m$. Suppose, by way of contradiction, that $b'(\overline m) =1$, and, for $\tilde m \in (0, \overline m)$ and $\tilde x \in (\tilde m, b(\tilde m))$, consider $b_{\tilde x, \tilde m}$. Because $b_{\tilde x, \tilde m} (\tilde m) < b (\tilde m)$, we have $b_{\tilde x, \tilde m} < b$ on $(\tilde m, \overline m)$. Moreover, for each $m \in  (\tilde m, \overline m)$, it must be that $b_{\tilde x, \tilde m} (m)> m$. Indeed, otherwise, letting $\hat m :=  \inf \hskip 0.5mm \{m\in (\tilde m, \overline m):b_{\tilde x, \tilde m} (m) = m\}$, we would have $b_{\tilde x, \tilde m} (\hat m) = \hat m$ and $E(\hat m, \hat m) = b_{\tilde x, \tilde m}' (\hat m) \leq 1 = b'(\overline m) = E(\overline m, \overline m)$, in contradiction with Lemma \ref{forodegenegood}(i) in Appendix \ref{AuxLemmas} along with the fact that $H$ is decreasing over $[\tilde m, \overline m] \subset (0, \overline y]$ by Assumption \ref{ass:A12}. Thus $m < b_{\tilde x, \tilde m} (m) < b(m)$ for all $m \in (\tilde m, \overline m)$, from which it follows by continuity that $b_{\tilde x, \tilde m} (\overline m) =b(\overline m)$ as $\overline m =b(\overline m)$. This, by the Cauchy--Lipschitz theorem, however implies $b_{ \tilde x, \tilde m} = b$, in contradiction with $b_{ \tilde x, \tilde m} (\tilde m) < b(\tilde m)$. Thus $b'(\overline m)  < 1$, as desired. Hence the result. \hfill $\blacksquare$

\subsection{Solution to the Tipping-Point Problem} \label{solution}

In this section, we characterize the value function of the singular control problem \eqref{markovv} and the associated optimal control when it exists. The previous section suggests three key properties: first, the value function of \eqref{markovv} satisfies the free-boundary problem (\ref{c1})--(\ref{vs3}); second, for any state $(x,m)\in \mathcal J$ such that $0 < m <\overline m$ and $x \geq b(m)$, it is optimal to immediately extract $x - b(m)$; third, for any state $(x,m)\in \mathcal J$ such that $\overline m \leq m < x$, it is optimal to immediately extract $x-m$. However, it is unclear whether it is optimal to immediately extract $m- \overline m$ in a state $(m,m)\in \mathcal J$ such that $m > \overline m $.  We will see that the answer depends on the relative positions of $\overline m$ and of the optimal extraction threshold $u^*$ for the downgraded problem.

\subsubsection{The Candidate Value Function}

Our next result derives the candidate value function for our tipping-point problem.

\begin{proposition} \label{verif'}
Given the solution $b$ to the Cauchy problem \eqref{ode2} and the functions $A$ and $B$ defined in \eqref{A}--\eqref{B}$,$ the function $W$ defined on ${\cal J}$ by
\begin{align}
W(x,m) & := A(m) \psi ( x) + B(m) \phi(x), \quad 0 \leq m \leq\overline m  \text{ and }  m \leq x \leq b(m), \label{d1}
\\
W(x,m)&:= [x - b(m)] F(m) +\frac{\mu (b (m))}{r}\, F(m),   \quad 0 \leq m \leq \overline m  \text{ and } x>b(m) , \label{d11}
\\
W(x,m) &:= \frac{\mu(\overline m)}{r} \,F(\overline m)+ ( x - m) F(m) +\int_{\overline m}^m [U(s)f(s)+F(s)] \, \mathrm ds ,\quad  x \geq m > \overline m, \label{d12}
\end{align}
satisfies $W\geq V,$ where $V$ is the value function of the singular control problem \eqref{markovv}.
\end{proposition}

\noindent \textbf{Proof.} Consider the three domains
\begin{align*}
{\cal J}_1 &:= \{(x,m) \in \CJ :   m \leq\overline m \text{ and } m \le x \leq b(m) \},
\\
{\cal J}_2 &:=\{(x,m) \in \CJ : m \leq \overline m \text{ and } x \geq b(m) \},
\\
{\cal J}_3 &: =\{(x,m) \in {\mathcal J}:  m \geq \overline m\}.
\end{align*}
Lemma \ref{regulW} in Appendix \ref{AuxLemmas} shows that $W\in \mathcal R(\CJ)$ and in particular is continuous on $\mathcal{J}$. By Lemmas \ref{verif} and \ref{pasdesaut}, it is sufficient to prove that $W$ defined by (\ref{d1})--(\ref{d12}) is a solution to the HJB equation \eqref{HJB} with boundary condition \eqref{diagonal1} on the three subdomains $\CJ_1$, $\CJ_2$, and $\CJ_3$.

\medskip

(1) On $\CJ_1$, given the solution $b$ to the Cauchy problem \eqref{ode2} and the functions $A$ and $B$ defined by (\ref{A})--(\ref{B}), and recalling that $b$ is constant on $[\overline y \wedge \overline m, \overline m]$, direct computations mimicking those detailed in Section \ref{gva} imply that $\CL W-rW=0$ and that the boundary conditions \eqref{vs2}--\eqref{vs2'} are satisfied; in particular, $W$ satisfies \eqref{diagonal1}. Finally, it follows from $W_x((b(m),m) = F(m)$ and $W_{xx}((b(m),m) = 0$ that the mapping $x \mapsto W(x,m)$ is concave on $[m,b(m)]$ for all $m \in (0, \overline{m}]$ (\cite[Lemma 4.2(c)]{SGL}), which implies that $W_x(x,m) \ge F(m)$ for all $(x,m) \in \CJ_1$. Thus $W$ satisfies \eqref{HJB}.

\medskip

(2) On $\CJ_2$, $W(x,m)$ is affine in $x$ with slope $F(m)$. Moreover, for each $(x,m) \in {\cal J}_2$,
\begin{align} \label{verifineq2}
{\cal L}W(x,m) -rW(x,m) = \{[\mu(x)  - rx] -   [\mu (b(m)) -r b(m)]\}F(m) \le 0,
\end{align}
where the inequality follows from Assumption \ref{ass:A2}. Therefore, $W$ satisfies \eqref{HJB}, and \eqref{diagonal1} is irrelevant as $x > b(m) \geq m$.

\medskip

(3) On $\CJ_3$, $W(x,m)$ is affine in $x$ with slope $F(m)$ and $W_m(m,m) = U(m) f(m)$, so that $W$ satisfies \eqref{diagonal1}. There only remains to show that $\CL W-rW\leq 0$. For each $(x,m) \in {\cal J}_3$,
\begin{align}
&{\cal L}W(x,m) - rW(x,m) \notag
\\
&= [\mu(x) - rx]F(m)  - \mu(\overline m) F(\overline m) + r m F(m) - r \int_{\overline m}^m [U(s) f(s) +F(s)] \, \mathrm ds,  \notag
\\
& \leq  \mu(m)F(m)  - \mu(\overline m) F(\overline m)  - r \int_{\overline m}^m [U(s) f(s) +F(s)] \, \mathrm ds \notag
\\
&: = R(m),  \label{comfy}
\end{align}
where the inequality follows from Assumption \ref{ass:A2}. Notice that $R(\overline m) =0$ and
\begin{align*}
R'(m) = r F(m)  \left \{ \frac{\mu'(m)}{r} -1 + H(m) \!\left[\frac{\mu(m)}{r} - U(m)\right]\right \}\!,
\end{align*}
where $R'(\overline y):=R'(\overline y - )$ by convention whenever $\overline y \geq \overline m$. For each $m \geq \overline m$, we have
\begin{align*}
H(m) \,{\frac{\mu( m)}{r}-U( m) \over 1-\frac{\mu'( m)}{r}} = E( m,  m) \leq E(\overline m, \overline m) < 1,
\end{align*}
where the first inequality follows from Assumption \ref{ass:A12} and Lemma \ref{forodegenegood}(i), and the second inequality follows from Proposition \ref{lemedo}. It follows that $R'(m) \leq 0$ and hence $R(m) \leq 0$ for all $m \geq \overline m$, which implies by \eqref{comfy} that $W$ satisfies \eqref{HJB}. Hence the result. \hfill $\blacksquare$

\bigskip

Some comments on our candidate value function $W$ are in order. The expression \eqref{d1} for $W$ on $\CJ_1$ follows from our analysis of \eqref{vs1} and from Proposition \ref{lemedo}, and is intuitively associated with a control that horizontally reflects  the process $X$ at the boundary $b$. The expression of $W$ on $\CJ_2$ follows from \eqref{operatorT-2}, which our candidate assumes to be an equality for $m < x' <x$ and $x' = b(m)$, and corresponds to an initial extraction of $x-b(m)$. The equality $W(b(m), m) =\frac{\mu(b(m))}{r}\,F(m)$ follows from {\eqref{vs1}}--{\eqref{vs2}} and {\eqref{vs3}}. In order to discuss the expression of $W$ on $\CJ_3$, it is useful to distinguish the two cases $u^* \leq \overline m$ and  $u^* > \overline m$.\footnote{Notice that, as the vector field (\ref{vecfield}) depends on $U$ and thus $\overline m$ depends on $u^*$, it is not obvious that these two cases can arise for different specifications of the model. To see that the case $u^* < \overline m$ must be considered, simply set $U(x) \equiv x$, so that $u^*=0 < x^0 = \eta(0) = b(0) < b(\overline m) = \overline m$. We show in Appendix \ref{Apptwocases} that the case $u^* > \overline m$ is also relevant.}

\paragraph{Case 1: $\boldsymbol{u^* \leq \overline m}$}

Intuitively, in that case, if $x > \overline m$, then the dm, upon immediately extracting $x- \overline m$, does not risk ending up in a situation in which he would have extracted too much, should he thereby have crossed the tipping point $Y$. To see this, notice that for $m >m'\geq u^*$, $U(m)= U(m')+m-m'$. Therefore, if $x \geq m > \overline m \geq u^*$, then \eqref{d12} can be rewritten as
\begin{align}
&W(x,m) \notag
\\
&=\frac{\mu(\overline m)}{r} \, F(\overline m)+ ( x - m) F(m) +\int_{\overline m}^m \{[U(\overline m)+(s-\overline m)]f(s)+F(s)\}\, \mathrm ds \notag
\\
&= \frac{\mu(\overline m)}{r} \,F(\overline m)+ ( x - m) F(m)+ [U(\overline m)-\overline m][F(m)-F(\overline m)]+\int_{\overline m}^m [sf(s)+F(s)] \, \mathrm ds \notag
\\
&= \frac{\mu(\overline m)}{r} \, F(\overline m)+ ( x - m) F(m)+ [U(\overline m)-\overline m][F(m)-F(\overline m)]+mF(m)-\overline m F(\overline m) \notag \\
&= F(m) \left\{x - \overline m + {W(\overline m,\overline m) \over F(\overline m)} \left[{F(\overline m) \over F(m)}\right] \!+ U(\overline m)\! \left[1-{F(\overline m) \over F(m)}\right]\right\}\!, \label{djump}
\end{align}
where the third equality follows from $[sF(s)]'=sf(s)+F(s)$. The expression \eqref{djump} reflects that, if the dm extracts $x-\overline m$ at time 0, two scenarios can occur: either $Y < \overline m$, with conditional probability $F(\overline m) \over F(m)$, in which case the dm obtains the continuation payoff $W(\overline m, \overline m) \over F(\overline m)$ associated with the control that reflects $X$ on the boundary $b$ starting from $(\overline m,\overline m)$, or $Y\in[\overline m, m)$, with conditional probability $1- {F(\overline m) \over F(m)}$, in which case the dm obtains the continuation payoff $U(\overline m)$. The intuition is that, because $U$ is affine over $[\overline m, m) \subset [u^*, \infty)$, the dm is risk-neutral with respect to the risk that $Y\in[\overline m, m)$, which makes it optimal for him to immediately extract $x- \overline m$ starting from $(x,m)$. The reason is that the optimal extraction threshold $u^*$ associated with the downgraded value function $U$ is less than $\overline m$, so that the optimal control in the downgraded problem, consisting in immediately extracting $z-u^*$ when starting at a level of reserves $z \geq u^*$, is consistent with the immediate extraction of $x-\overline m$ in the tipping-point problem. This course of action is thus optimal in the tipping- point problem as it causes no regret in the downgraded problem, reflecting that, because the dm has the opportunity to immediately extract a larger quantity in the latter problem even if the tipping point is reached in the interval $[\overline m, m)$, the marginal value of one additional unit of reserves in both problems is equal to 1.

\paragraph{Case 2: $\boldsymbol{u^* > \overline m}$}

In that case, it is convenient to split  ${\cal J}_3 $ into two subdomains
\begin{align*}
{\cal J}'_3 &:= \{(x,m)  \in {\mathcal J} : \overline m \le m < u^* \} \,\text{ and } \hskip 0.3mm  {\cal J}''_{3} : =\{(x,m)  \in {\mathcal J}: m \geq u^*\}.
\end{align*}
In analogy with \eqref{djump}, using that $U'=1$ on $[u^*,\infty)$, we have, for $(x,m)\in {\cal J}''_{3}$,
\begin{align*}
W(x,m) = F(m) \left\{x - u^* + {W(u^*,u^*) \over F(u^*)} \left[{F(u^*) \over F(m)}\right] + U(u^*)\! \left[1-{F(u^*) \over F(m)}\right]\right\}\!,
\end{align*}
which intuitively calls for immediately extracting $x-u^*$. To grasp the intuition of \eqref{d12} on the set ${\cal J}'_3$, take $(x,m) \in {\cal J}$ with $m \in [\overline m, u^*)$ and consider as above a control that involves extracting $x- \overline m$ at time $0$. Now, the dm is facing the risk that $Y \in [\overline m, u^*)$. In this case, the DM ends up with a level of reserves strictly below $u^*$ in the downgraded problem; the difference is that, now, it would have been optimal to wait in the downgraded problem starting from the tipping point $Y$ because $Y\leq m <u^*$ and thus $U'(Y) >1$. This aggressive control is therefore suboptimal. Suppose now that the dm were informed of the value of the tipping point $Y$, conditional on it lying in the the interval $[\overline m, m)$. Under this assumption, the optimal control is intuitively to extract at time 0 up to the tipping point $Y$ if $Y \in [\overline m, m)$ and then to act optimally in the downgraded problem, or to extract up to $\overline m$ if $Y < \overline m$ and then apply the control which reflects $x$ at the boundary $b$. In that hypothetical situation, the resulting payoff would be
\begin{align*}
{\EE}_{x,m} \hskip 0.5mm & [[U(Y) +x - Y]\ind_{\{ Y\in (\overline m, m)\}}] +(x - \overline m) F(\overline m)   +W(\overline m, \overline m)
\\
&=   \int_{\overline m}^{m} [U(s) + x - s] f(s) \, \mathrm ds+(x - \overline m) F( \overline m) + W(\overline m, \overline m)
\\
& = (x-m)F(m)+\int_{\overline m}^m [U(s) f(s) + F(s)] \, \mathrm ds +W(\overline m, \overline m),
\end{align*}
which yields \eqref{d12} as $W(\overline m, \overline m) = \frac{\mu(\overline m)}{r} \,F(\overline m)$. Turning back to the tipping-point problem, in which the dm does not know the exact value of $Y$, this heuristic argument suggests that, when $u^* > \overline m$, the dm should aim at extracting ``as fast as possible'' as long as $m \in [\overline m, m)$, effectively ``gliding'' along the diagonal, so as to learn the value of $Y$ as soon as possible in case it lies in $[\overline m, m)$. This, of course, raises the question of whether an optimal control exists at all, a question to which we now turn.

\subsubsection{Optimal and $\varepsilon$-Optimal Controls}

This section construct, for the case $u^*\leq \overline m$, a control $L^*$ that is optimal in the sense that the function $W$ defined in Proposition \ref{verif'} satisfies $W(x,m)=V(x,m;L^*)$ for all $(x,m) \in \CJ$, and, for the case $u^*>\overline m$, a family of controls $L^{*\varepsilon}$ indexed by $\varepsilon>0$ such that $W(x,m)=  \lim_{\varepsilon \to 0}V(x,m;L^{*\varepsilon})$ for all $(x,m) \in \CJ$. This shows in both cases that $W$ is indeed the optimal value function of the tipping-point problem \eqref{pb}.

The proof relies on the following result.

\begin{proposition} \label{skob}
Let $b$ be the solution of the Cauchy problem \eqref{ode1} characterized in Proposition {\rm \ref{lemedo}}. For each $(x,m)\in \mathcal{J}$ and any stopping time $\tau,$ there exists a unique solution $(X,M,L)$ defined on $\{\tau<\infty\}$ on the time interval $[\tau,\tau_0]\cap [\tau,\infty)$ to the reflected sde
\begin{align}
X_{\tau-}&=x, \; M_{\tau-}=m, \; L_{\tau-}=0, \label{sko00}
\\
\mathrm dX_t &= \mu(X_t) \, \mathrm dt + \sigma (X_t) \, \mathrm dB_t - \mathrm dL_t,\; M_t = m \wedge \inf_{\tau \leq s \leq t} X_s, \quad t \geq \tau, \label{sko0}
\\
X_t & \in [M_t, b(M_t)]\cap [0,\overline m] \;\; \mathbb P\text{-a.s.}, \quad t \geq \tau, \label{sko1}
\\
\int_{[\tau,t]}&\, \ind_{\{X_s < b(M_s) \}} \, \mathrm dL_s = 0 \;\; \mathbb P\text{-a.s.}, \quad  t\geq \tau. \label{sko2}
\end{align}
\end{proposition}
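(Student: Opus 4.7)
The plan is to use the strong Markov property of $B$ to reduce to a deterministic starting time, then build the solution pathwise by a penalization scheme and prove uniqueness via a Gronwall argument. The main obstacle throughout is that the reflecting barrier $b(M_t)$ is itself a path functional of the unknown process $X$, so the problem does not fall into the classical reflected SDE framework.

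On $\{\tau<\infty\}$, the strong Markov property reduces the problem to $\tau=0$ with deterministic initial data $(x,m) \in \CJ$. If $(x,m)$ lies outside the admissible set $\{m\leq x\leq b(m)\wedge \overline m\}$, a mandatory initial jump $\Delta L_0 = x - b(m)\wedge \overline m$ brings the state inside. When $x>m$, on the interval $[0,\sigma_1]$ with $\sigma_1=\inf\{t\ge 0 : X_t\le m\}$, the running minimum stays frozen, $M_t\equiv m$, and the system reduces to a classical one-dimensional SDE with constant upper reflection at $b(m)$ and no lower reflection, for which existence and pathwise uniqueness follow from Skorokhod/Tanaka theory under the Lipschitz assumption on $\mu,\sigma$. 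At $\sigma_1$ one has $X_{\sigma_1}=m=M_{\sigma_1}$, which reduces everything to the diagonal case $x=m$.

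For the diagonal case, I would construct the solution by penalization: for $\epsilon>0$ solve the classical SDE
\[ dX^\epsilon_t = \mu(X^\epsilon_t)\,dt + \sigma(X^\epsilon_t)\,dB_t - \tfrac{1}{\epsilon}\bigl(X^\epsilon_t - b(M^\epsilon_t)\bigr)^+ dt,\quad X^\epsilon_0 = m,\]
with $M^\epsilon_t = m\wedge \inf_{0\le s\le t}X^\epsilon_s$. A preliminary ingredient is to check that $b$ is Lipschitz on $[0,\overline x]$, which follows from the ODE representation in Proposition \ref{lemedo} on $(0,\overline y\wedge \overline m)$ together with $b$ being constant outside this interval. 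Since the running-infimum operator is $1$-Lipschitz in sup norm, the penalized drift is functional-Lipschitz in the sense of \cite{ProtterBook}, so $X^\epsilon$ exists uniquely. Setting $L^\epsilon_t = \int_0^t \tfrac{1}{\epsilon}(X^\epsilon_s - b(M^\epsilon_s))^+ ds$, uniform moment bounds in $\epsilon$ yield relative compactness of $(X^\epsilon, L^\epsilon)$ on $[0,T]$; any limit point $(X,M,L)$ satisfies \eqref{sko00}--\eqref{sko2}, the Skorokhod complementarity \eqref{sko2} following from $\epsilon L^\epsilon_T = \int_0^T (X^\epsilon_s - b(M^\epsilon_s))^+ ds \to 0$.

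Uniqueness follows by applying It\^o's formula to $(X^1_t - X^2_t)^2$ for two candidate solutions $(X^i,M^i,L^i)_{i=1,2}$. The drift and diffusion contributions are Lipschitz-controlled via the regularity of $\mu$ and $\sigma$. The key reflection cross terms, using that $X^i_s = b(M^i_s)$ on $\mathrm{supp}(dL^i)$, satisfy
\[ -2(X^1_s - X^2_s)\,d(L^1 - L^2)_s \;\le\; 2\,|X^1_s - X^2_s|\cdot |b(M^1_s) - b(M^2_s)|\,d(L^1 + L^2)_s;\]
combining the Lipschitz bound on $b$, the pathwise contraction $\sup_{s\le t}|M^1_s - M^2_s| \le \sup_{s\le t}|X^1_s - X^2_s|$, and an a priori $L^2$-bound on $L^i_T$ obtained from It\^o applied to $X^i$, a standard Gronwall inequality yields $X^1 = X^2$, and hence $M^1 = M^2$ and $L^1 = L^2$. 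The hard part is closing the Gronwall estimate despite the coupling of the reflection term through $M$; the combined Lipschitz regularity of $b$ and the $1$-Lipschitz property of the running-infimum map are the decisive structural features that make both the penalization limit and this estimate work.
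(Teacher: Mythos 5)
Your overall architecture (reduce to $\tau=0$, initial jump, freeze the barrier while $X>m$, then treat the diagonal) is reasonable, but the core of the argument — uniqueness via Gronwall, and implicitly the stability needed to identify the penalization limit — has a genuine gap. The reflection cross term you isolate gives, after using $X^i_s=b(M^i_s)$ on $\mathrm{supp}(dL^i_s)$ and $X^{3-i}_s\le b(M^{3-i}_s)$, a bound of the form $2\,|b(M^1_s)-b(M^2_s)|\,d(L^1+L^2)_s\le 2\|b\|_{\mathrm{Lip}}\sup_{u\le s}|X^1_u-X^2_u|\,d(L^1+L^2)_s$. Writing $g(t)=\EE[\sup_{s\le t}|X^1_s-X^2_s|^2]$, this term contributes (after Cauchy--Schwarz) something like $C\sqrt{g(t)}\,\big(\EE[(L^1_t+L^2_t)^2]\big)^{1/2}$, which is of order $\sqrt{g(t)}$ rather than $g(t)$: the inequality $g(t)\le C\int_0^t g+C'\sqrt{g(t)}$ does not force $g\equiv 0$ (it is satisfied by positive constants). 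The random measure $d(L^1+L^2)$ is not absolutely continuous in $t$ and $L_t$ is not small, so neither Gronwall in time nor absorption into the left-hand side closes the estimate. This is exactly the known obstruction for reflection at a solution-dependent barrier: what is needed is that the map $X\mapsto b(M^X)$ be a \emph{strict} contraction in sup norm, i.e.\ a Lipschitz constant strictly less than $1$, and Proposition \ref{lemedo} only guarantees $b'(\overline m)<1$, not $\sup b'<1$ on $[0,\overline m]$. A secondary issue is that the global Lipschitz property of $b$ near $m=0$ is asserted but not proved: $b'(m)=E(b(m),m)$ with $f(m)/F(m)\to+\infty$ and $N(b(m),m)\to 0$, so boundedness of $b'$ requires an extra argument (e.g.\ $0\le \eta(m)-b(m)\le \eta(m)-\eta(0)=O(m)$ using that $\eta$ is $C^1$ and $b$ is non-decreasing); the existence proof in the paper only yields integrability of $b'$ at $0$.

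The paper's proof is built precisely around this obstruction and proceeds quite differently. Near the corner, on $[m_0,\overline m]$ where $b'<1$, it runs a contraction-mapping (fixed point) argument directly on the explicit Skorokhod representation $L(X)_t=\sup_{s\le t}(\,\cdot\,-l(M^X_s))^+$, where the constant $c<1$ in $|l'|<c<1$ is exactly what makes the map a contraction for small $T$ (Lemma \ref{step1}). Away from the corner, where $b(m)-m\ge\delta>0$, it abandons contraction altogether and constructs the solution by concatenating reflections at \emph{frozen} levels $b_k=b(m_k)$, updating the level only when $X$ returns to the midpoint $\tfrac{b(M)+M}{2}$, and then proves via exit-time estimates and a conditional Borel--Cantelli argument that the switching times $\gamma_k$ do not accumulate. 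Your proposal engages with neither of these two structural facts ($b'(\overline m)<1$ and $b(m)-m\ge\delta$ below $m_0$), and without one of them the uniqueness step cannot be completed as written.
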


\vskip 3mm

Proposition \ref{skob} is related to the existence and uniqueness of solutions of sdes with reflecting boundary conditions for a domain that has a corner at which more than one oblique direction is allowed (see \cite{Dupuis}). In that respect, it should be noted that our condition $b'(\overline m)<1$ is relevant for obtaining a unique \textit{strong} solution to \eqref{sko00}--\eqref{sko2}; a \textit{weak} solution to \eqref{sko00}--\eqref{sko2} is known to exist for reflected sdes in cusps, which would arise if we had $b'(\overline m)=1$ (see \cite{Costantini}). Because we have not been able to find a reference that exactly fits our setting, we provide a proof in Appendix \ref{AppendixProofs}.

\paragraph{Case 1: $\boldsymbol{u^* \leq \overline m}$}

The following result holds.

\begin{proposition}\label{identifyValue}
If $u^* \leq \overline m,$ then the function $W$ defined by \eqref{d1}--\eqref{d12} is the value function of the singular control problem \eqref{markovv}. Moreover$,$ for any initial condition $(x,m) \in \mathcal J,$ letting $(X^{L^*},M^{L^*},L^*)$ denote the solution to \eqref{sko00}--\eqref{sko2} given in Proposition {\rm \ref{skob}} with $\tau \equiv 0,$ $L^*$ is an optimal control.
\end{proposition}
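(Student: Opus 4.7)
The plan is to prove $W=V$ by combining Proposition \ref{verif'}, which already yields $W\ge V$, with the explicit construction of an admissible control $L^*$ for which $V(x,m;L^*)=W(x,m)$. The control $L^*$ is the one produced by Proposition \ref{skob} applied with $\tau=0$: the reflected SDE forces an initial jump of $L^*$ to bring $(X,M)$ into the reflection region, and subsequently maintains $X^{L^*}\in[M^{L^*},b(M^{L^*})]$ by increasing $L^*$ only when $X^{L^*}=b(M^{L^*})$. Concretely, the initial jump $\Delta L^*_0$ equals $0$ on $\CJ_1$, $x-b(m)$ on $\CJ_2$, and $x-\overline m$ on $\CJ_3$; in this last case $M^{L^*}$ also jumps from $m$ to $\overline m$.

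The main step is It\^o's formula applied to $e^{-rt}W(X^{L^*}_t,M^{L^*}_t)$ on $[0,t\wedge\tau_0\wedge T_n]$, using the same localization $T_n$ as in the proof of Lemma \ref{verif}. Under $L^*$, each inequality of that verification argument becomes an equality: after time $0$ the trajectory stays in $\overline{\CC}$ where $(\CL-r)W=0$; the continuous part $dL^{*,c}$ is supported on $\{X^{L^*}=b(M^{L^*})\}$ where \eqref{vs2} gives $W_x=F(M^{L^*})$; and the continuous part $dM^{L^*,c}$ is supported on $\{X^{L^*}=M^{L^*}\}$ where \eqref{vs2'} gives $W_m=U(M^{L^*})f(M^{L^*})$. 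These identifications match exactly the continuous integrands appearing in the reward functional \eqref{reward}.

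The only jump of $L^*$, and of $M^{L^*}$, occurs at $t=0$, so the jump sum in the It\^o expansion reduces to the single term $W(X^{L^*}_0,M^{L^*}_0)-W(x,m)$, which must reproduce the corresponding lump-sum contributions $F(m)\Delta L^*_0$ and $U(M^{L^*}_0)(F(m)-F(M^{L^*}_0))$ from \eqref{reward}. On $\CJ_2$, relation \eqref{d11} yields $W(b(m),m)-W(x,m)=-(x-b(m))F(m)$, which is exactly $-F(m)\Delta L^*_0$ (no jump of $M^{L^*}$); on $\CJ_3$, relation \eqref{djump} yields $W(\overline m,\overline m)-W(x,m)=-(x-\overline m)F(m)-U(\overline m)(F(m)-F(\overline m))$, matching simultaneously both the lump-sum and the $M$-jump contributions. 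I then let $n\to\infty$ and $t\to\infty$; the stochastic integral term vanishes in expectation by the localization $T_n$, monotone convergence handles the positive reward integrals, and the transversality $\EE_{x,m}[e^{-rt}W(X^{L^*}_t,M^{L^*}_t)]\to 0$ follows from the fact that, after $t=0$, $(X^{L^*},M^{L^*})$ remains in a compact subset of $\CJ$ on which the continuous function $W$ is bounded.

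The principal obstacle is the $\CJ_3$ identity \eqref{djump}: its derivation relies on $U(s)=U(\overline m)+(s-\overline m)$ on $[\overline m,m]$, which is precisely the consequence of $u^*\le\overline m$. This is what makes the lump-sum jump to $\overline m$ compatible with the downgraded continuation value $U$ whatever the realized position of the tipping point in $(\overline m,m]$, and hence what makes $L^*$ truly optimal. When $u^*>\overline m$ the identity breaks, jumping to $\overline m$ leaves value on the table in the event $Y\in(\overline m,u^*)$, and no admissible strategy attains $W$, which is exactly why the complementary regime must be treated by the $\epsilon$-optimal construction developed later in the paper.
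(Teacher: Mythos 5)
Your proposal is correct and follows essentially the same route as the paper: invoke $W\ge V$ from Proposition \ref{verif'}, then show $V(x,m;L^*)=W(x,m)$ by applying It\^o's formula to $e^{-rt}W(X^{L^*}_t,M^{L^*}_t)$ with the localization $T_n$, turning each verification inequality into an equality via $(\CL-r)W=0$ on $\CJ_1$, the support properties of $dL^{*,c}$ and $dM^{L^*,c}$ together with \eqref{vs2} and \eqref{vs2'}, and the identity \eqref{djump} (valid precisely because $u^*\le\overline m$) for the single jump at time $0$. The limiting argument and the role of the hypothesis $u^*\le\overline m$ are also identified exactly as in the paper.
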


\noindent \textbf{Proof.} Because $W\geq V$, it is sufficient to prove that $W$ defined by \eqref{d1}--\eqref{d12} satisfies $W(x,m)=V(x,m;L^*)$ for all $(x,m) \in \CJ$. Let $(X^{L^*},M^{L^*},L^*)$ denote the solution to (\ref{sko00})--(\ref{sko2}) given in Proposition \ref{skob} with $\tau=0$. Notice that \eqref{sko1}--\eqref{sko2} imply that
\begin{align*}
L^*_0=(x-\overline m)\ind_{ (x,m)\in \mathcal J_3}+ [x-b(m)]\ind_{ (x,m) \in \mathcal J_2},
\end{align*}
and that, except for a potential jump at time $0$, the processes $(X^{L^*},M^{L^*},L^*)$ are continuous and $(X^{L^*},M^{L^*}) \in \mathcal J_1$. As in Lemma \ref{verif}, because $W\in \mathcal{R}(\mathcal J)$, we can apply It\^{o}'s formula to the \pagebreak process  $(\mathrm e^{-r(t\wedge T_n \wedge \tau_0)} W(X^{L^*}_{t\wedge T_n\wedge \tau_0},M^{L^*}_{t\wedge T_n\wedge \tau_0}))_{t \geq 0}$, where $T_n$ is defined as in \eqref{localiz}. Taking expectations, we obtain
\begin{align}
W(x,m) & =  \EE_{x,m} \!\left[\mathrm e^{-r (t\wedge T_n \wedge  \tau_0)} W(X^{L^*}_{t\wedge T_n \wedge  \tau_0}, M^{L^*}_{t\wedge T_n \wedge  \tau_0}) \right] \notag
\\
&  \quad -  \EE_{x,m}\!\left[\int_0^{t\wedge T_n \wedge  \tau_0} \mathrm e^{-rs} ({\cal L}-r)W(X^{L^*}_{s},M^{L^*}_{s}) \, \mathrm ds \right]
\allowdisplaybreaks \notag
\\
& \quad - \EE_{x,m} \!\left[\int_{[0, {t\wedge T_n \wedge  \tau_0}]} \mathrm e^{-rs} W_m (M^{L^*}_{s}, M^{L^*}_{s}) \, \mathrm dM_s^{L^*,c} \right] \notag
\\
& \quad +\EE_{x,m} \! \left[ \int_{[0, t\wedge T_n \wedge  \tau_0]} \mathrm e^{-rs} W_x (X^{L^*}_{s}, M^{L^*}_{s})\, \mathrm dL_s^{*,c} \right] \notag
\\
& \quad -  \EE_{x,m} \! \left[ \sum_{0 \le s \le t\wedge T_n \wedge  \tau_0} \mathrm e^{-rs} [W(X^{L^*}_s, M^{L^*}_s) - W(X^{L^*}_{s-} , M^{L^*}_{s-})] \right] \hskip -1mm. \label{RolloTommasi}
\end{align}
Because $(X_s^{L^*},M_s^{L^*}) \in \CJ_1$ for $s \le t\wedge T_n \wedge \tau_0$, we have $({\cal L}-r)W(X_s^{L^*},M_s^{L^*})=0$, and hence the second term on the right-hand side of \eqref{RolloTommasi} vanishes. Next, because the random measure $\mathrm dM_s^{L^*,c}$ only charges $\{s \ge 0 : X_s^{L^*}= M_s^{L^*}\}$, the boundary condition \eqref{diagonal1} yields
\begin{align*}
\EE_{x,m}\!&\left[\int_{[0, {t\wedge T_n \wedge  \tau_0}]} \mathrm e^{-rs} W_m (M^{L^*}_{s}, M^{L^*}_{s}) \, \mathrm dM_s^{L^*,c} \right]
\\
&=\EE_{x,m} \!\left[\int_{[0, {t\wedge T_n \wedge  \tau_0}]} \mathrm e^{-rs} U(M^{L^*}_{s})f(M^{L^*}_{s}) \, \mathrm dM_s^{L^*,c}\right] \hskip -1mm.
\end{align*}
Then, conditions (\ref{sko1})--(\ref{sko2}) together with the fact that $W_x(b(m),m)=F(m)$ for $m\in (0,\overline m]$ imply that
\begin{align*}
\EE_{x,m} \! \left[ \int_{[0, t\wedge T_n \wedge  \tau_0]} \mathrm e^{-rs} W_x (X^{L^*}_{s}, M^{L^*}_{s}) \, \mathrm dL_s^{*,c} \right] \!= \EE_{x,m} \! \left[ \int_{[0, t\wedge T_n \wedge  \tau_0]} \mathrm e^{-rs} F (M^{L^*}_{s}) \, \mathrm dL_s^{*,c} \right] \hskip -1mm.
\end{align*}
Finally, using that the only possible jump in $L^*$ is at time 0, the last term in \eqref{RolloTommasi} is equal to
\begin{align*}
-\, \EE_{x,m} \!& \left[ \sum_{0 \le s \le t\wedge T_n \wedge  \tau_0} \mathrm e^{-rs} [W(X^{L^*}_s, M^{L^*}_s) - W(X^{L^*}_{s-} , M^{L^*}_{s-})] \right]
\\
&= [ W(x,m)-W(\overline m,\overline m)]\ind_{(x,m)\in \mathcal J_3} + [W(x,m)-W(m,b(m))]\ind_{(x,m)\in \mathcal J_2}
\\
&= \{(x-\overline m)F( m)+U(\overline m)[F(m)-F(\overline m)]\}\ind_{(x,m)\in \mathcal J_3} + [(x-m)F(m)]\ind_{(x,m)\in \mathcal J_2}
\\
&= F(M^{L^*}_{0-})\Delta L^*_0 + U(M^{L^*}_0)[F(M^{L^*}_{0-})-F(M^{L^*}_0)],
\end{align*}
where the second equality follows from \eqref{djump} and the fact that $W_x(y,p)=F(y)$ for $(y,p)\in \mathcal J_2$. Gathering the previous equalities, we obtain
\begin{align*}
W(x,m) & =  \EE_{x,m}\!\left[\mathrm e^{-r (t\wedge T_n \wedge  \tau_0)} W(X^{L^*}_{t\wedge T_n \wedge  \tau_0}, M^{L^*}_{t\wedge T_n \wedge  \tau_0}) \right]
\\
& \quad +\EE_{x,m} \! \left[ \int_{[0, t\wedge T_n \wedge  \tau_0]} \mathrm e^{-rs} F (M^{L^*}_{s-}) dL_s^{*} \right]
\\
& \quad - \EE_{x,m} \! \left[\int_{[0, {t\wedge T_n \wedge  \tau_0}]} \mathrm e^{-rs} U (M^{L^*}_{s})f( M^{L^*}_{s}) \, \mathrm dM_s^{L^*,c} \right]
\\
& \quad +  \EE_{x,m} \! \left[U(M^{L^*}_0)[F(M^{L^*}_{0-})-F(M^{L^*}_0)] \right] \hskip -1mm.
\end{align*}
Letting $n, t \to \infty$, and using that $W$ is bounded on $\CJ_1$, we conclude from \eqref{reward} that $W(x,m)=V(x,m;L^*)$. Hence the result. \hfill $\blacksquare$

\paragraph{Case 2: $\boldsymbol{u^* > \overline m}$}

If $(x,m) \in \CJ_3$ and $\varepsilon>0$, we construct a process $(X^{L^{*\varepsilon}},M^{L^{*\varepsilon}},L^{*,\varepsilon})$ corresponding to a control that initially extracts up to $u^*$, then extracts at a rate $\frac{1}{\varepsilon}$ until $X^{L^{*\varepsilon}}$ reaches the level $\overline m$, and finally reflects $X^{L^{*\varepsilon}}$ on the boundary $b$. First, let
\begin{align}
\label{control1} X^{L^{*\varepsilon}}_{0-}:=x, \; M^{L^{*\varepsilon}}_{0-}:=m, \; L^{*\varepsilon}_{0-}:=0
\end{align}
and
\begin{align}\label{control2}
L^{*\varepsilon}_0:=x-m\wedge u^*, \;  X^{L^{*\varepsilon}}_0:=m\wedge u^*, \;  M^{L^{*\varepsilon}}_0:=m\wedge u^*.
\end{align}
Next, let $(X^{\varepsilon}_t)_{t\geq 0}$ be the unique strong solution to
\begin{align} \label{=5}
\mathrm d  X_t^{ \varepsilon} = \!\left[\mu( X_t^{ \varepsilon}) -\frac{1}{\varepsilon}\right] \mathrm dt + \sigma( X_t^{ \varepsilon}) \, \mathrm dB_t,\quad X_0^{\varepsilon} = X^{L^{*\varepsilon}}_0=m\wedge u^* \in [\overline m,  u^*],
\end{align}
and, letting $\tau_y^\varepsilon := \inf \hskip 0.5mm \{ t \geq 0 : X_t^\varepsilon\leq y\}$ for all $y \in [\overline m, m\wedge u^*]$, define
\begin{align}\label{control3}
L^{*\varepsilon}_t:=L^{*\varepsilon}_0+\frac{1}{\varepsilon}\,t, \;X^{L^{*\varepsilon}}_t:=X^\varepsilon_t, \; M^{L^{*\varepsilon}}_t:=m\wedge \inf_{0 \leq s \leq t}X^{\varepsilon}_s, \quad t \in (0,\tau^\varepsilon_{\overline m}] \cap (0,\infty).
\end{align}
Consider finally the solution $(\hat X^\varepsilon, \hat M^\varepsilon, \hat L^\varepsilon)$ of the reflected sde \eqref{sko00}--\eqref{sko2} with initial condition $(\overline m, \overline  m)$ at time $\tau^\varepsilon_{\overline m}$, and define on $\{ \tau^\varepsilon_{\overline m} <\infty\}$
\begin{align}\label{control4}
L^{*\varepsilon}_t:=L^{*\varepsilon}_{\tau^\varepsilon_{\overline m}}+\hat L^\varepsilon_t, \;X^{L^{*\varepsilon}}_t := \hat X^\varepsilon_t, \; M^{L^{*\varepsilon}}_t := \hat M^\varepsilon_t, \quad t\in (\tau^\varepsilon_{\overline m}, \tau_0] \cap (\tau^\varepsilon_{\overline m}, \infty),
\end{align}
where $\tau_0:=\inf \hskip 0.5mm \{t \geq \tau^\varepsilon_{\overline m}: \hat X^\varepsilon_t \leq 0\}$. The following result then holds.

\begin{proposition}\label{identifyValue'}
If $u^*> \overline m,$ then the function $W$ defined by \eqref{d1}--\eqref{d12} is the value function of the singular control problem \eqref{markovv}. If $m \leq \overline m,$ the control $L^*$ defined in Proposition {\rm \ref{identifyValue}} is optimal. If $m > \overline m,$ the family of controls $(L^{*\varepsilon})_{\varepsilon>0}$ defined by \eqref{control1}--\eqref{control2} and \eqref{control3}--\eqref{control4} satisfies $W(x,m)=\lim_{\varepsilon \to 0} V(x,m;L^{*\varepsilon})$.
\end{proposition}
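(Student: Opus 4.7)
Since $W \geq V$ holds by Proposition~\ref{verif'}, the task is to produce controls whose expected payoffs approach $W(x,m)$ from below. I would split the analysis into the two cases $m \leq \overline m$ and $m > \overline m$.

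The first case is essentially a corollary of Proposition~\ref{identifyValue}. The key observation is that the control $L^*$ built via Proposition~\ref{skob} with $\tau = 0$ keeps $(X^{L^*}, M^{L^*})$ inside $\overline{\CJ_1}$ after the (possibly present) initial lump-sum extraction from $\CJ_2$, so the trajectory never enters $\CJ_3$ regardless of the relative position of $u^*$ and $\overline m$. The formulas \eqref{d1}--\eqref{d11} for $W$ on $\CJ_1 \cup \CJ_2$ do not involve $u^*$, so the It\^o-formula argument used in the proof of Proposition~\ref{identifyValue} transfers verbatim and gives $V(x, m, L^*) = W(x, m)$. In particular, $V(\overline m, \overline m, L^*) = W(\overline m, \overline m)$.

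For $m > \overline m$, I would split the reward associated with $L^{*\epsilon}$ along the three natural time intervals defined in \eqref{control2}--\eqref{control4}. The terminal phase $(\tau^\epsilon_{\overline m}, \tau_0]$ contributes $\EE_{x,m}[e^{-r\tau^\epsilon_{\overline m}} W(\overline m, \overline m)]$ by the strong Markov property and the value just computed at $(\overline m,\overline m)$. The initial jump at time $0$ contributes $F(m)(x - m \wedge u^*) + U(m \wedge u^*)(F(m) - F(m \wedge u^*))$ directly from the definitions. For the middle phase, $(X^\epsilon_s, M^{L^{*\epsilon}}_s)$ stays in $\CJ_3$, on which $W$ is smooth with $W_x = F(m)$, $W_{xx} = 0$, and $W_m(m,m) = U(m) f(m)$ along the diagonal that carries $dM^{L^{*\epsilon},c}$. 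Applying It\^o's formula to $e^{-rs} W(X^\epsilon_s, M^{L^{*\epsilon}}_s)$ on $[0, \tau^\epsilon_{\overline m}]$, using $dL^{*\epsilon}_s = \epsilon^{-1} ds$ on this interval, and rearranging, one obtains
\begin{equation*}
V(x, m, L^{*\epsilon}) = F(m)(x - m \wedge u^*) + U(m \wedge u^*)(F(m) - F(m \wedge u^*)) + W(m \wedge u^*, m \wedge u^*) + R_\epsilon,
\end{equation*}
where $R_\epsilon := \EE_{x,m}\!\left[\int_0^{\tau^\epsilon_{\overline m}} e^{-rs} (\CL - r) W(X^\epsilon_s, M^{L^{*\epsilon}}_s) \, ds\right]$.

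A short calculation using $U(s) = U(u^*) + (s - u^*)$ for $s \geq u^*$ and integration by parts, treating the cases $m \leq u^*$ and $m > u^*$ separately, establishes the algebraic identity
\[ W(x, m) = W(m \wedge u^*, m \wedge u^*) + F(m)(x - m \wedge u^*) + U(m \wedge u^*)(F(m) - F(m \wedge u^*)) \]
on $\CJ_3$, so that $V(x, m, L^{*\epsilon}) - W(x, m) = R_\epsilon$. The main obstacle is then controlling $R_\epsilon$: even though $(\CL - r) W$ is non-positive on $\CJ_3$, it does not vanish, so the residual must be shown to tend to $0$ as $\epsilon \downarrow 0$. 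This rests on the strong negative drift of $X^\epsilon$, of order $-1/\epsilon$: a standard martingale/Gronwall estimate on the SDE \eqref{=5} yields $\EE_{x,m}[\tau^\epsilon_{\overline m}] = O(\epsilon)$, and since $|(\CL - r) W|$ is bounded on the relevant compact region of $\CJ_3$, one gets $|R_\epsilon| = O(\epsilon) \to 0$. Combined with the reverse inequality $V \leq W$ coming from Proposition~\ref{verif'}, this yields $V(x,m) = W(x,m)$ and completes the proof.
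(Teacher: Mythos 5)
Your proposal is correct in both cases and, for the case $m>\overline m$, takes a genuinely different route from the paper's. The paper also splits the payoff of $L^{*\epsilon}$ into the initial jump, the transient phase on $[0,\tau^\epsilon_{\overline m}]$, and the terminal phase, but it then evaluates the transient reward \emph{directly}: a change of variables turns $\int e^{-rs}F(M^{L^{*\epsilon}}_{s-})\,dL^{*\epsilon}_s$ into $\int_0^\infty e^{-ry\epsilon}F(M^{L^{*\epsilon}}_{y\epsilon})\ind_{\{y\epsilon\leq\tau^\epsilon_{\overline m}\}}dy$ and the $dM^{L^{*\epsilon},c}$ integral into $\int_{\overline m}^{m\wedge u^*}e^{-r\tau^\epsilon_y}U(y)f(y)\,dy$, and each piece is shown to converge to the corresponding term of $W(x,m)$ using the in-probability convergence $\tau^\epsilon_y\to 0$ (Lemma \ref{forepsi}), dominated convergence, and the $L^2$ estimate \eqref{eq11} to compare $F(M^{L^{*\epsilon}}_{y\epsilon})$ with $F(m\wedge u^*-y)$. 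You instead apply It\^{o}'s formula to $W$ itself along the $\epsilon$-trajectory in $\CJ_3$, so that the reward terms cancel exactly against $W_x=F(m)$ and $W_m(m,m)=U(m)f(m)$, leaving the single residual $R_\epsilon=\EE[\int_0^{\tau^\epsilon_{\overline m}}e^{-rs}(\CL-r)W\,ds]$; this is cleaner and makes transparent why the strategy is only $\epsilon$-optimal (namely $(\CL-r)W<0$ on $\CJ_{3'}$). Your reduction of $V(x,m;L^{*\epsilon})-W(x,m)$ to $R_\epsilon$, and the algebraic identity behind it, are correct; note only that "$W$ smooth on $\CJ_3$" should be read through the class $\mathcal{R}(\CJ)$, since $W$ has a kink across $m=\overline y$.

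Two points in your estimate of $R_\epsilon$ need more than the paper's Lemma \ref{forepsi} delivers. First, that lemma gives $\tau^\epsilon_{\overline m}\to 0$ in probability (equivalently $\tilde\tau^\epsilon_{\overline m}\to m\wedge u^*-\overline m$ in probability after the time change), which suffices for the paper's dominated-convergence arguments because every integrand there is bounded by $1$; your bound $|R_\epsilon|\leq C\,\EE[\tau^\epsilon_{\overline m}]$ requires convergence of the \emph{expectation}, i.e.\ uniform integrability of $\tilde\tau^\epsilon_{\overline m}$. This is true but must be argued, e.g.\ by localizing so that the drift of $\tilde X^\epsilon$ is $\leq -1/2$ and applying optional stopping to the supermartingale $\tilde X^\epsilon_t+t/2$ to get $\sup_\epsilon\EE[\tilde\tau^\epsilon_{\overline m}]<\infty$. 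Second, $(\CL-r)W$ is not bounded on $\CJ_3$ (it grows linearly in $x$), and $X^\epsilon$ is not confined to a deterministic compact set; you need to pair the linear growth with the moment bound $\EE[\sup_{s\leq T}|\tilde X^\epsilon_s|^2]<\infty$ (which is implicit in the derivation of \eqref{eq11}) via Cauchy--Schwarz. Both are routine repairs, so I would classify them as loose ends rather than gaps in the idea.
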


\noindent \textbf{Proof.} In case $m \leq \overline m$ and $x\geq m$, the proof that $W(x,m)=V(x,m)$ and that the control $L^*$ is optimal is similar to the proof of Proposition \ref{identifyValue}.

In case $m > \overline m$ and $x \geq m$, as $W\geq V$, it is sufficient to prove that $W(x,m)=\lim_{\varepsilon \to 0} V(x,m;L^{*\varepsilon})$. Using that $L^{*\varepsilon}$, $X^{L^{*\varepsilon}}$, and $M^{L^{*\varepsilon}}$ are continuous except possibly for a deterministic jump at time $0$, we have
\begin{align}
V(x,m;L^{*\varepsilon})
&= \EE_{x,m} \!\left [ \int_{[0,\tau_0]} \mathrm e^{-rs}  F(M^{L^{*\varepsilon}}_{s-})  \, \mathrm d L^{*\varepsilon}_s   -\int_{[0,\tau_0]} \mathrm e^{-rs} U(M^{L^{*\varepsilon}}_{s}) f(M^{L^{*\varepsilon}}_{s}) \, \mathrm dM^{{L^{*\varepsilon}},c}_s \right] \notag
\\
& \quad+   \EE_{x,m} \!\left [ \sum_{0\leq s \leq \tau_0} \mathrm e^{-rs}  U(M^{L^{*\varepsilon}}_{s})[F(M^{L^{*\varepsilon}}_{s-}) - F({M^{L^{* \varepsilon}}_s})] \right] \notag
\\
& = \EE_{x,m} \! \left [ \int_{[0,\tau^\varepsilon_{\overline m}]} \mathrm e^{-rs}  F(M^{L^{*\varepsilon}}_{s-})  \, \mathrm d L^{*\varepsilon}_s
-\int_{[0,\tau^\varepsilon_{\overline m}]} \mathrm e^{-rs} U(M^{L^{*\varepsilon}}_{s}) f(M^{L^{*\varepsilon}}_{s}) \, \mathrm dM^{{L^{*\varepsilon }},c}_s \right ] \notag
\\
&\quad + \EE_{x,m} \! \left [ \int_{[\tau^\varepsilon_{\overline m}, \tau_0]} \mathrm e^{-rs}  F(M^{L^{*\varepsilon}}_{s-})  \, \mathrm d L^{* \varepsilon}_s -\int_{[\tau^\varepsilon_{\overline m}, \tau_0]} \mathrm e^{-rs} U(M^{L^{*\varepsilon}}_{s}) f(M^{L^{*\varepsilon}}_{s}) \, \mathrm dM^{{L^{*\varepsilon}},c}_s \right ] \allowdisplaybreaks \notag
\\
& \quad+  U(M^{L^{*\varepsilon}}_{0})[F(M^{L^{*\varepsilon}}_{0-}) - F({M^{L^{*\varepsilon}}_0})]. \label{Ben1}
\end{align}
From \eqref{control4}, using It\^{o}'s formula and arguing exactly as in the proof of Proposition \ref{identifyValue}, we obtain that, on $\{\tau^\varepsilon_{\overline m} < \infty\}$, we have $\mathbb P$-a.s.
\begin{align}
&\mathrm e^{-r \tau^\varepsilon_{\overline m}}W(\overline m, \overline m) \notag
\\
& = \EE_{x,m}\! \left[\int_{[\tau^\varepsilon_{\overline m}, \tau_0]} \mathrm e^{-rs}  F(M^{L^{*\varepsilon}}_{s-})  \, \mathrm d L^{*\varepsilon}_s
-\int_{[\tau^\varepsilon_{\overline m}, \tau_0]} \mathrm e^{-rs} U(M^{L^{*\varepsilon}}_{s}) f(M^{L^{*\varepsilon}}_{s}) \, \mathrm dM^{{L^{* \varepsilon}} ,c}_s \! \mid \!\CF_{\tau^\varepsilon_{\overline m}} \right] \label{Ben2}
\end{align}
Using the same arguments as in the proof of Proposition \ref{markovprop}, we obtain
\begin{align}
\EE_{x,m} \! \left [   -\int_{[\tau^\varepsilon_{\overline m}, \tau_0]} \mathrm e^{-rs} U(M^{L^{*\varepsilon}}_{s}) f(M^{L^{*\varepsilon}}_{s}) \, \mathrm dM^{{L^{*\varepsilon}},c}_s \right ] \! = \EE_{x,m}\! \left[ \int_{\overline m}^{m\wedge u^*} \mathrm e^{-r \tau^\varepsilon_y}U(y)f(y) \, \mathrm dy\right] \hskip -1mm . \label{Ben3}
\end{align}
The standard change of variables formula for Stieltjes integrals (\cite[Chapter 0, Proposition 4.9]{RevuzYor}), yields, $\mathbb P$-a.s.
\begin{align}
\int_{[0, \tau^{{\varepsilon} }_{\overline m}]} \mathrm e^{-rs} F(M_{s-}^{L^{*\varepsilon}})\,\mathrm dL^{*\varepsilon}_s & =(x-m\wedge u^*)F(m) +\int_{[0, \tau^{{\varepsilon} }_{\overline m}]} \mathrm e^{-rs} F(M_{s-}^{L^{*\varepsilon}})\, \mathrm dL^{*\varepsilon,c}_s \notag
\\
& = (x-m\wedge u^*)F(m)+ \int_0^\infty \mathrm e^{-ry\varepsilon} F(M_{y\varepsilon}^{L^{*\varepsilon}}) \ind_{\{y\varepsilon \leq \tau^{{ \varepsilon }} _{\overline m} \}}\, \mathrm dy . \label{Ben4}
\end{align}
Gathering \eqref{Ben1}--\eqref{Ben4}, we obtain
\begin{align}
&V(x,m;L^{*\varepsilon}) \notag
\\
&=  (x-m\wedge u^*)F(m)+  U(m\wedge u^*)[F(m) - F(m\wedge u\*)] \notag
\\
&\quad +\EE_{x,m} \! \left [\int_0^\infty \mathrm e^{-ry\varepsilon} F(M_{y\varepsilon}^{L^{*\varepsilon}}) \ind_{\{y\varepsilon \leq \tau^{{ \varepsilon}}_{\overline m} \}}\, \mathrm dy+ \int_{\overline m}^m \mathrm e^{-r \tau^\varepsilon_y} U(y) f(y) \, \mathrm dy + \mathrm e^{-r \tau^{{\varepsilon} }_{\overline m}} W(\overline m,\overline m) \right ]\hskip -1mm.  \label{eqVespilon}
\end{align}
To conclude the proof, we need the following lemma.
\begin{lemma} \label{forepsi}
For all $y\in [ \overline m , m \wedge  u^*),$ $\tau^{\varepsilon}_{y} \stackrel{{\PP}}{\to } 0$ as $\varepsilon \to 0$.
\end{lemma}

\noindent \textbf{Proof.} Consider the process $ X^\varepsilon$ defined by (\ref{=5}). From the time change formula for It\^{o} integrals (see, for instance, \cite[Theorem 8.5.7]{Oksendal}), the process $\tilde X^{\varepsilon}$ defined by $\tilde X^{\varepsilon}_t\equiv X_{t\varepsilon} ^ \varepsilon$ for all $t\geq 0$ satisfies the sde
\begin{align*}
\mathrm d \tilde X_t^\varepsilon = - \mathrm dt  + \varepsilon \mu(\tilde X_t^\varepsilon) \, \mathrm dt + \sqrt{\varepsilon} \sigma(\tilde X_t^ \varepsilon) \, \mathrm d\tilde B_t, \quad \tilde X_0^{\varepsilon} = m \wedge u^* \in (\overline m, u^*],
\end{align*}
where $\tilde B:=(\tilde B_t)_{t \geq 0}$ is an $({\cal F}_{ t \varepsilon})_{t \geq 0}$-Brownian motion. We claim that
\begin{align} \label{eq11}
\EE \! \left[ \sup_{0 \leq s\leq T} \big|\tilde X_s^{\varepsilon} - (m \wedge u^*-s)\big|^2 \right ] \! \leq  \varepsilon (K_1 \varepsilon + K_2) \text{ for all } T> m\wedge u^*-\overline m,
\end{align}
where $K_1$ and $K_2$ are two positive constants. First, we have
\begin{align}
&\EE \! \left[ \sup_{0 \leq s\leq T} \big|\tilde X_s^{\varepsilon} - (m \wedge u^*-s)\big|^2 \right ] \notag  \allowdisplaybreaks
\\
&\leq 2 \,\EE \!\left [  \sup_{0 \leq s\leq T} \left| \int_0^s \varepsilon \mu(\tilde X_u^\varepsilon) \, \mathrm du \right|^2 \right ] + 2 \, \EE \! \left [  \sup_{0 \leq s\leq T} \left| \int_0^s \sqrt \varepsilon \sigma(\tilde X_u^\varepsilon) \, \mathrm d\tilde B_u \right |^2 \right ]\hskip -1mm . \label{Ben5}
\end{align}
Next,
\begin{align}
\EE \,\left [  \sup_{0 \leq s \leq T} \left| \int_0^s \sqrt \varepsilon \sigma(\tilde X_s^\varepsilon) \, \mathrm d\tilde B_s \right|^{2} \right ] &\leq C_1  \EE \! \left [\int_0^T \varepsilon \sigma^2 (\tilde X_s^\varepsilon) \, \mathrm ds  \right ]\nonumber
\\
& \leq   \varepsilon C_1 T \, \EE \!\left [K \!\left (1 + \sup_{0 \le s\le T} \big| \tilde X_s^\varepsilon \big|^{2} \right )\right ]\nonumber
\\
& \leq   \varepsilon C_1  C_2 T(1 + m^{2}) \,\mathrm e^{C_2T}, \label{6-1}
\end{align}
for some positive constants $K$, $C_1$, and $C_2$, where the first inequality follows from the Burkholder--Davis--Gundy inequalities (\cite[Chapter 3, \S3, Theorem 3.28]{KaratzasShreve}), the second \linebreak inequality follows from $\sigma$ being Lipschitz on $(0, \infty)$, and the third inequality follows from \cite[Chapter 5, \S3, Problem 3.15]{KaratzasShreve}. Last, and similarly,
\begin{align}
\EE \! \left [  \sup_{0 \leq s\leq T} \left| \int_0^s \varepsilon \mu(\tilde X_s^\varepsilon) \, dt \right|^2 \right ] & \leq  \varepsilon^2 T^2 \, \EE  \! \left [K \! \left ( 1 + \sup_{0 \leq s\leq T} \big|\tilde X_s^\varepsilon \big|^2\right )\right ] \leq  \varepsilon^2 T^2 C_3 (1 + m^2) \, \mathrm e^{C_3T}. \label{6-2}
\end{align}
for some positive constants $K$ and $C_3$. The inequality (\ref{eq11}) then follows from (\ref{Ben5})--(\ref{6-2}). We deduce from (\ref{eq11}) that, letting $\tilde \tau^{\varepsilon}_{y} \:= \inf\{t\geq 0 : \tilde X_t^\varepsilon \leq y\} $, we have, for each $\delta \in (0, T-(m\wedge u^*-\overline m))$,
\begin{align} \label{top}
\PP\hskip 0.3mm [T\wedge \tilde \tau^{\varepsilon}_{y}  - (m\wedge u^* - y) >\delta] \leq \PP \!\left[\sup_{0 \leq s\leq T} \big |\tilde X_s^\varepsilon - (m\wedge u^*-s)\big|> \delta \right] \leq \frac{\varepsilon (K_1 \varepsilon + K_2)}{\delta^2},
\end{align}
where the first inequality in (\ref{top}) follows from the fact that $\sup_{0 \leq s \leq T} \big |\tilde X_s^\varepsilon - (m \wedge u^*-s)\big|\leq  \delta $ $\mathbb P$-a.s.\! implies $T \wedge \tilde \tau^{\varepsilon}_{y} \in [ m\wedge u^*- y - \delta,  m \wedge u^* - y + \delta]$, and the second inequality follows from (\ref{eq11}) using Chebyshev's inequality. Finally, it follows from (\ref{top}) that $ \tilde \tau^{\varepsilon}_{y}  \stackrel{{ \PP}}{\to} m \wedge u^*- y$ as $\varepsilon \to 0$, and thus, in turn, that $\varepsilon   \tilde \tau^{\varepsilon}_{y} \stackrel{{\PP}} {\to } 0$ as $\varepsilon \to 0$. This concludes the proof as $ \tilde \tau^{\varepsilon}_{y}=\frac{ \tau^\varepsilon_{y}}{\varepsilon}$  ${\PP}$-a.s. The result follows. \hfill $\blacksquare$

\bigskip

Returning to the proof of Proposition \ref{identifyValue'}, we now use Lemma \ref{forepsi} to study the convergence of \eqref{eqVespilon} as $\varepsilon \to 0$.

By Fubini's Theorem and using dominated convergence, we have
\begin{align}
\lim_{\varepsilon \to 0}\,\EE_{x,m} \! \left [ \int_{\overline m}^{m\wedge u^*} \mathrm e^{-r \tau^\varepsilon_y} U(y) f(y) \, \mathrm dy \right ] \!
&=\lim_{\varepsilon \to 0} \int_{\overline m}^{m\wedge u^*} \EE_{x,m} \! \left [\mathrm e^{-r \tau^\varepsilon_y}\right ] U(y) f(y) \, \mathrm dy \notag
\\
&=\int_{\overline m}^{m\wedge u^*} \lim_{\varepsilon \to 0}\,\EE_{x,m} \! \left [ \mathrm e^{-r \tau^\varepsilon_y}\right ] U(y) f(y) \, \mathrm dy \notag
\\
& = \int_{\overline m}^{m\wedge u^*} U(y) f(y) \, \mathrm dy, \label{conv1}
\end{align}
where the last equality \pagebreak follows from the fact that the random variable $\mathrm e^{-r \tau^\varepsilon_y}$ is bounded and, by Lemma \ref{forepsi}, converges in probability to $1$ as $\varepsilon \to 0$, so that $\lim_{\varepsilon \to 0} \EE_{x,m} \hskip 0.3mm [\mathrm e^{-r \tau^\varepsilon_y}]= 1$. Likewise,
\begin{align}\label{conv2}
\lim_{\varepsilon \to 0} \, \EE_{x,m} \! \left[\mathrm e^{-r \tau^{{\varepsilon} }_{\overline m}} W(\overline m,\overline m) \right ] \! = W(\overline m, \overline m).
\end{align}
The bulk of the proof consists in showing that
\begin{align} \label{r1}
\liminf_{\varepsilon \to  0}\,\EE_{x,m} \! \left [\int_0^\infty \mathrm e^{-ry\varepsilon} F(M^{L^{*\varepsilon}}_{y\varepsilon}) \ind_{\{y\varepsilon \leq \tau^{\varepsilon}_{\overline m} \}}\, \mathrm dy\right ] \! \geq \int_{\overline m}^{m\wedge u^*} \!F(y) \, \mathrm dy.
\end{align}
As observed in the proof of Lemma \ref{forepsi}, $  \tilde \tau^{\varepsilon}_{\overline m}  =\frac{ \tau^\varepsilon_{\overline m}}{\varepsilon}$  $\mathbb P$-a.s. Thus
\begin{align}
\EE_{x,m} \!  \left [\int_0^\infty \mathrm e^{-ry\varepsilon} F(M_{y\varepsilon}^{L^{*\varepsilon}}) \ind_{\{y\varepsilon \leq \tau^{ \varepsilon }_ {\overline m} \}}\, \mathrm dy\right ] & =  \EE_{x,m} \! \left [\int_0^{\tilde \tau^{\varepsilon}_{\overline m}}\mathrm e^{-r\varepsilon  y} F(M^{L^{* \varepsilon}}_{y\varepsilon})\,\mathrm dy  \right ]\nonumber
\\
& \geq  \EE_{x,m} \! \left [\int_0^{\tilde \tau^{\varepsilon}_{\overline m}\wedge (m\wedge u^* - \overline m)} \mathrm e^{-r\varepsilon  y} F(M^{L^{* \varepsilon}}_{y\varepsilon})\,\mathrm dy  \right ]\hskip -1mm. \label{ep3}
\end{align}
The right-hand side of \eqref{ep3} can be decomposed as follows:
\begin{align}
&\EE_{x,m}  \! \left [\int_0^{\tilde \tau^{\varepsilon}_{\overline m}\wedge (m\wedge u^*-\overline m)} \mathrm e^{-r\varepsilon  y} F(M^{L^{* \varepsilon }}_{y\varepsilon})\, \mathrm dy  \right ] \notag
\\
& \;\; \qquad \qquad \qquad \qquad  = \int_0^{m\wedge u^* - \overline m} \mathrm e^{-r\varepsilon y} F(m\wedge u^*-y) \, \mathrm dy \notag
\\
&  \;\; \qquad \qquad \qquad \qquad \quad- \,\EE_{x,m} \! \left [\int_{\tilde \tau^\varepsilon_{\overline m} \wedge (m \wedge u^*- \overline m)}^{m\wedge u^* - \overline m}  \mathrm e^{-r\varepsilon y} F(m\wedge u^*- y) \, \mathrm dy \right ] \notag
\\
&  \;\; \qquad \qquad \qquad \qquad \quad + \,\EE_{x,m} \! \left [\int_0^{\tilde \tau^\varepsilon_{\overline m} \wedge (m\wedge u^*  - \overline m) } \mathrm e^{-r\varepsilon y} [F(M^{L^{*\varepsilon}}_{y\varepsilon}) - F(m\wedge u^* - y)]\, \mathrm dy\right ] \hskip -1mm. \label{ABC}
\end{align}
Let us examine the three terms in the last expression in turn.

\medskip

(1) For the first one, we have
\begin{align}
\label{rr2}\lim_{\varepsilon \to 0} \, \int_0^{m\wedge u^* - \overline m} \mathrm e^{-r\varepsilon y} F(m \wedge u^*-y) \, \mathrm dy  =  \int_{\overline m}^{m\wedge u^*} F(y) \, \mathrm dy
\end{align}
by bounded convergence.

\medskip

(2) For the second one, we have
\begin{align*}
\left |\int_{\tilde \tau^\varepsilon_{\overline m} \wedge (m \wedge u^*- \overline m)}^{m \wedge u^* - \overline m} \mathrm e^{-r\varepsilon y} F(m \wedge u^*- y) \, \mathrm dy \right | \leq | m \wedge u^* - \overline m - \tilde\tau^{\varepsilon}_{\overline m}|\ind_{\tilde \tau^{\varepsilon} _{\overline m} < m \wedge u^* - \overline m}.
\end{align*}
From the proof of Lemma \ref{forepsi}, the right-hand side of the latter inequality is a bounded random variable which converges in probability to $0$ as $\varepsilon \to 0$. Thus, its expectation tends to $0$ as $\varepsilon \to 0$, which implies
\begin{align} \label{rr2'}
\lim_{\varepsilon \to 0} \, \EE_{x,m} \! \left [\int_{\tilde \tau^\varepsilon_{\overline m} \wedge (m \wedge u^*- \overline m)}^{m\wedge u^* - \overline m} \mathrm e^{-r\varepsilon y} F(m \wedge u^*- y) \, \mathrm dy \right ] \!=0.
\end{align}

(3) For the third one, we have, for some positive constant $C$,
\begin{align}
\EE_{x,m} \! & \left [\left|\int_0^{\tilde\tau^\varepsilon_{\overline m} \wedge (m\wedge u^* - \overline m) } \mathrm e^{-r\varepsilon y} [F(M^{L^{* \varepsilon}}_{y\varepsilon}) - F(m \wedge u^* - y)]\, \mathrm dy \right|\right ]\nonumber
\\
& \leq C \,\EE_{x,m} \! \left [\int_0^{\tilde\tau^\varepsilon_{\overline m} \wedge (m \wedge u^* - \overline m) } \! \left|\inf_{0 \leq t \leq y} \tilde X_t^\varepsilon - \inf_{0 \leq t \leq y} (m\wedge u^*-t) \right | \mathrm dy\right ] \nonumber
\\
&\leq C(m\wedge u^*- \overline m) \, \EE_{x,m} \! \left [\sup_{0 \leq y \leq \tilde \tau^\varepsilon_{\overline m}\wedge (m \wedge u^* - \overline m)]}\left|\inf_{0 \leq t\leq y } \tilde X_t^\varepsilon - \inf_{0 \leq t\leq y} (m\wedge u^*-t) \right|\right ]\nonumber
\\
&\leq C(m \wedge u^*- \overline m) \,\EE_{x,m} \! \left [\sup_{0 \leq y \leq T} \big|\tilde X_y^\varepsilon - (m\wedge u^*-y)\big|\right ]\nonumber
\\
&\leq C(m\wedge u^* - \overline m) \, \sqrt{\EE_{x,m} \! \left [\sup_{0 \leq y \leq T} \big|\tilde X_y^\varepsilon - (m\wedge u^*-y) \big|^2\right ]}, \label{1805}
\end{align}
where the first inequality follows from $F$ being Lipschitz and from the fact that $M_{y\varepsilon}^{L^{*\varepsilon}} = \inf_{0 \leq t\leq y \varepsilon} X_t^{L^{*\varepsilon}}= \inf_{0 \leq t\leq y }\tilde X_t^{L^{*\varepsilon}}$, and the last inequality follows from Jensen's inequality. Together with (\ref{eq11}), (\ref{1805}) implies
\begin{align} \label{rr3}
\lim_{\varepsilon \to 0}\, \EE_{x,m} \! \left [ \left|\int_0^{\tilde \tau^\varepsilon_{\overline m} \wedge (m \wedge u^* - \overline m) } \mathrm e^{-r\varepsilon y} [F(M^{L^{*\varepsilon}}_{y\varepsilon}) - F(m\wedge u^* - y)]\, \mathrm dy \right|\right ]\!=0.
\end{align}
The inequality \eqref{r1} is then a direct consequence of (\ref{ABC}), (\ref{rr2}),  (\ref{rr2'}), and (\ref{rr3}).

\medskip

To complete the proof, notice, using \eqref{d12}, that
\begin{align*}
W(x,m)&= W(\overline m, \overline m)+ ( x - m) F(m) +\int_{\overline m}^m [U(s)f(s)+F(s)] \, \mathrm ds
\\
&=W(\overline m, \overline m)+ (x - m\wedge u^*) F(m)
\\
&\quad +U(m\wedge u^*)[F(m)-F(m\wedge u^*)] +\int_{\overline m}^{m\wedge u^*} [U(s)f(s)+F(s)] \, \mathrm ds,
\end{align*}
where the second equality is obtained in the same way as \eqref{djump}. Using \eqref{conv1}--\eqref{r1}, we have
\begin{align*}
\liminf_{\varepsilon \to 0} \, V(x,m,L^{*\varepsilon}) \geq W(x,m).
\end{align*}
From Proposition \ref{verif'}, $W(x,m) \geq V(x,m,L^{*\varepsilon})$ for all $\varepsilon>0$, which proves that
\begin{align*}
\lim_{\varepsilon \to 0} \, V(x,m,L^{*\varepsilon}) = W(x,m).
\end{align*}
Hence the result. \hfill $\blacksquare$

\section{Concluding Remarks}

Our contribution in this paper is threefold.

First, we introduced a new class of optimal extraction problems, in which a dm with randomly fluctuating reserves also faces the risk of reserves crossing a tipping point whose exact location is unknown to him ex ante, and below which he obtains a downgraded continuation value. As we argued in the introduction, this class of problems is relevant for many applications in ecology and economics.

Second, we showed that any such optimal extraction problem is amenable to a Markovian formulation, in which the two state variables are the current level of reserves and the minimum level of reserves reached so far. The dm's reward functional in the resulting two-dimensional singular control problem features integrals with respect to future extraction increments and future decrements of the running minimum. We gave a general verification lemma for this new class of singular control problems.

Third, under natural assumptions on the dm's continuation value once the tipping point has been reached, we obtained a complete characterization of the dm's value function and of his optimal extraction policy whenever it exists. Key to this characterization is the identification of an optimal extraction boundary that is solution to an ode.

By providing a complete solution for a relevant class of singular control problems for a diffusion and its running minimum, we hope that this paper will open new avenues of research in stochastic control theory and its applications to a broad spectrum of problems in optimal resource management.

\section*{Appendix}

\renewcommand{\thesection}{A}

\numberwithin{equation}{section}

\setcounter{equation}{0}

\setcounter{subsection}{0}

\newtheorem{applem}{Lemma}[section]

\subsection{A Class of Smooth Functions on $\CJ$}\label{Appendix:class}

We define $\mathcal{R}(\CJ)$ as the set of functions $w \in \CC^0(\CJ)$ such that there exists a finite sequence $0<m_1< \ldots <m_k$ for $k\geq 1$, with $w\in \CC^{2,1}(A_i)$ for all $1 \leq i \leq k+1$, where
\begin{align*}
A_1 &: =\{(x,m) \in \CJ : 0 < m \leq m_1\}
\\
A_i &: =\{(x,m) \in \CJ : m_{i-1}\leq m \leq m_i\} \text{ for all } 1< i \leq k,
\\
A_{k+1} &: =  \{(x,m) \in \CJ : m \geq m_k \},
\end{align*}
with the convention that, for any nonempty set $A\subset (0,\infty)^2$, $w\in \CC^{2,1}(A)$ if there exists a $\CC^{2,1}$ function defined on a neighborhood of $A$ in $(0,\infty)^2$ that coincides with $w$ on $A$. For each $w\in \mathcal{R}(\CJ)$, the partial derivatives $w_m$, $w_x$, and $w_{xx}$ are well-defined on $\{(x,m) \in \CJ :m>0 \text{ and } m\neq m_i \text{ for all } 1 \leq i \leq k\}$. Moreover, on $\{(x,m) \in \CJ :m>0 \text{ and } m= m_i \text{ for some } 1 \leq i \leq k\}$, the left-hand limits $w_m(x,m-)$, $w_x(x,m-)$, and $w_{xx}(x,m-)$ are always well-defined, and we shall hereafter use the convention that the quantities $w_m(x,m)$, $w_x(x,m)$, and $w_{xx}(x,m)$ correspond to these limits whenever $m=m_i$ for some $1\leq i \leq k$. As a result, we can justify the application of It\^o's formula for functions of class $\mathcal{R}(\CJ)$ by applying an appropriate version of It\^o's formula on every set $A_i$, $1 \leq i \leq k+1$. Specifically, for each $L\in \mathcal A(x)$ and for any such $i$, let $\sigma_i$ denote the first exit time of $A_i$ by the process $(X^L,M^L)$; because the process $M^L$ is nonincreasing, $\sigma_1=\tau_0$ and $\sigma_i \leq \sigma_{i+1}$ for all $1 \leq i \leq k$. Moreover, as $w\in \mathcal{R}(\CJ)$, we can apply the generalized version of It\^o's formula in \cite[Theorem 3.1]{Peskirlevampire} to the process $(\mathrm e^{-r (t\wedge \tau_0\wedge T_n)} w(X^L_{t\wedge \tau_0\wedge T_n}, M^L_{t \wedge \tau_0\wedge T_n}))_{t \geq 0}$, where the stopping time $T_n$ is given by (\ref{localiz}) for all $n \in \mathbb N$. Indeed, if $(X^L_0,M^L_0)\in A_{i_0}$, then the formula applies on $[0, t \wedge \sigma_{i_0} \wedge T_n)$ and extends to $t \wedge\sigma_{i_0}\wedge T_n$ by adding a potential jump at $t \wedge \sigma_{i_0}\wedge T_n$; moreover, as the process $M^L$ is nonincreasing, $(X^L,M^L)$ will successively exit only finitely many sets $A_{i_0}, \ldots, A_1$ until $\sigma_1= \tau_0$, so that the general formula follows by concatenation.

\subsection{Auxiliary Lemmas} \label{AuxLemmas}

\begin{applem} \label{forodegenegood}
The following holds$:$
\begin{itemize}

\item[(i)]

The mapping $m \mapsto \frac{N(m,m)}{G(m)D(m,m)}$ is decreasing on $[0, \overline x].$

\item[(ii)]

The mapping $x \mapsto  \frac{N(x,m)}{G(x)D(x,m)}$ is decreasing on $[m, \eta(m)]$ for all $m\in (0, \overline x]$.

\end{itemize}
\end{applem}

\noindent \textbf{Proof.} (i) We have
\begin{align*}
\frac{N(m,m)}{G(m)D(m,m)}= \frac{\mu(m)-rU(m)}{r-\mu'(m)},
\end{align*}
the derivative of which with respect to $m$ has the same sign as
\begin{align*}
[\mu'(m)-rU'(m)] [r-\mu'(m)] +[\mu(m)-rU(m)] \mu''(m),
\end{align*}
which is negative on $[0, \overline x]$ because $\mu' -rU' \leq \mu'-r <0$ by Assumptions \ref{ass:A2}--\ref{ass:A6}, $\mu''\leq 0$ by Assumption \ref{ass:A12}, and $\mu-rU \geq 0 $, and $\mu-rU\geq 0$ on $[0, \overline x]$ by Lemma \ref{lem:existence_barx}.

\medskip

(ii) The derivative of this mapping with respect to $x$ has the same sign as
\begin{align*}
N_x(x,m) G(x)D(x,m) -  N(x,m) [G'(x)D(x,m)+ G(x)D_x(x,m)].
\end{align*}
From Step 2 of the proof of Proposition \ref{bench}, $N(x,m) \geq 0$ for all $m \in [0, \overline x]$ and $x\in [m, \eta(m)]$. Moreover, $G'(x) = -{\mu''(x) \over r} \geq 0$ for all $x \geq 0$ by Assumption \ref{ass:A12}, and $D(x,m) >0$ for all $x \geq m \geq 0$ by \eqref{aux1}. Hence, as $G(x)=1-{\mu'(x) \over r}>0$ for all $x \geq 0$ by Assumption \ref{ass:A2}, it is sufficient to prove that
\begin{align}\label{ineq_lem_1}
N_x(x,m) D(x,m) - N(x,m)D_x(x,m) < 0
\end{align}
for all $m\in [0, \overline x]$ and $x\in [m, \eta(m)]$. Recalling that $D(x):=D(x,x)$, we have
\begin{align*}
N(x,m)= \phi(x)\psi(m)-\psi(x)\phi(m)+ \frac{\mu(x)}{r}\,D(x,m) - D(x)U(m)
\end{align*}
by \eqref{aux2}, and hence
\begin{align*}
N_x(x,m)= \!\left[\frac{\mu'(x)}{r}-1\right]\! D(x,m) + \frac{\mu(x)}{r}\, D_x(x,m)  - D'(x)U(m)
\end{align*}
by \eqref{aux1}, where
\begin{align*}
D'(x)=\frac{ - 2\mu(x)}{\sigma^2(x)} \, D(x)
\end{align*}
and
\begin{align*}
D_x(x,m) = \frac{2r}{\sigma^2(x)} \left[ \psi(x)\phi(m) -\phi(x)\psi(m) - \frac{\mu(x)}{r}\, D(x,m) \right]
\end{align*}
by \eqref{aux1} again, taking advantage from the fact that both $\psi$ and $\phi$ satisfy \eqref{eq:ODE_phi_psi}. Substituting these equalities in \eqref{ineq_lem_1}, we obtain after rearranging terms
\begin{align*}
N_x(x,m) D(x,m) &- N(x,m)D_x(x,m)
\\
& = \!\left[\frac{\mu'(x)}{r}-1\right] \! [D(x,m)]^2 -\frac{2r}{\sigma^2(x)} \,[\psi(x)\phi(m) -\phi(x)\psi(m)] N(x,m),
\end{align*}
which is negative because $D(x,m)>0$ for all $x \geq m \geq 0$, $\mu'(x)<r$ for all $x \geq 0$, $N(x,m) \geq 0$ for all $m \in [0, \overline x]$ and $x\in [m, \eta(m)]$, and  $\psi(x)\phi(m) -\phi(x)\psi(m)\geq 0$ for all $x\geq m \geq 0$ as $\psi$ is increasing and $\phi$ is decreasing. The result follows. \hfill $\blacksquare$

\begin{applem} \label{regulW}
$W\in {\cal R}({\cal J})$.
\end{applem}

\noindent \textbf{Proof.} Given the definition (\ref{d1})--(\ref{d12}) of $W$, it can be easily verified that $W \in \CC^0({\CJ})$. In the remainder of the proof, we focus on the case where $\overline m < \overline y$; the other cases $\overline m=\overline y$ and $\overline m> \overline y$ can be similarly handled without difficulty. We will show in turn that $W \in \CC^{2,1} (A_3)$, $W \in \CC^{2,1} (A_2)$, and $W \in \CC^{2,1} (A_1)$, where $A_3 := \{(x,m) \in {\cal J} :  m \geq \overline y \}$, $A_2:=\{(x,m) \in{\cal J} : \overline m \leq m\leq \overline y \}$, and $A_1:=\{(x,m) \in{\cal J} :  m\leq \overline m \}$.

\medskip

(1) For $(x,m)\in A_3$, because $f(s)=0$ for $s>\overline y$ and $F(s)=1$ for $s\geq \overline y$, we have
\begin{align*}
W(x,m)& = \frac{\mu(\overline m)}{r} \, F(\overline m)+ ( x - m)  F(m) +\int_{\overline m}^m [U(s)f(s)+F(s)]\, \mathrm ds
\\
&=\frac{\mu(\overline m)}{r}  \, F(\overline m)+ x - m +\int_{\overline m}^{\overline y} [U(s)f(s)+F(s)] \, \mathrm ds+m-\overline y.
\end{align*}
The mapping $(x,m) \mapsto \frac{\mu(\overline m)}{r}  \, F(\overline m)+ x - m +\int_{\overline m}^{\overline y}\, [U(s)f(s)+F(s)] \, \mathrm ds+m -\overline y$ defined on $(0, \infty)^2$ coincides with $W$ on $A_3$ and is $\CC^{2,1}$ on $(0,\infty)^2$ and thus on any neighborhood of $A_3$ in $(0, \infty)^2$. Hence $W \in \CC^{2,1}(A_3)$.

\medskip

(2) For $(x,m)\in A_2$, a difficulty arises due to the discontinuity of $f$ at $\overline y$. As in the proof of Proposition \ref{lemedo}, let $\hat f$ denote a Lipschitz positive function on $[0,\infty)$ that coincides with $f$ on $[0,\overline y]$, and let $\hat F(m):=\int_0^m \hat f(t) \, \mathrm dt$ for all $m >0$. The mapping $(x,m)\mapsto \frac{\mu(\overline m)}{r} \, \hat F(\overline m)+ ( x - m) \hat F(m) +\int_{\overline m}^m \, [U(s)\hat f(s)+\hat F(s)] \, \mathrm ds$ defined on $(0, \infty)^2$ coincides with $W$ on $A_2$ and is $\CC^{2,1}$ on $(0,\infty)^2$ and thus on any neighborhood of $A_2$ in $(0, \infty)^2$. Hence $W \in \CC^{2,1} (A_2)$.

\medskip

(3) Finally, we address the case of the set $A_1$. We first claim that $W$ is $\CC^{2,1}$ on the open set $\{(x,m) :  0 < m <\overline m \wedge x \}$. As $\psi$ and $\phi$ are $\CC^2$ on $(0,\infty)$, and $F$ and $b$ are $\CC^1$ on $(0,\overline m)$, the functions (\ref{d1}) and (\ref{d11}) are  $\CC^{2,1}$ on $\mathrm {int} \,\CJ_1$ and $\mathrm {int} \,\CJ_2$, respectively. To prove that the function obtained by pasting together (\ref{d1})--(\ref{d11}) is $\CC^{2,1}$ on $\{(x,m) :  0 < m <\overline m \wedge x \}$, we only need to verify this assertion on a neighborhood of $(m, b(m))$ for all $m \in (0, \overline m)$. This in turn follows from two observations. First, by construction,
\begin{align} \label{reg1}
W_x(b(m)^+,m) = F(m) = W_x (b(m)^-, m) \text{ and } W_{xx}(b(m)^+,m) =0=W_{xx}(b(m)^-,m).
\end{align}
Second, because $W$ is continuous at $(m, b(m))$ for each $m \in (0, \overline m)$, we have
\begin{align*}
\frac{\mu(b(m))}{r} \, F(m)=A(m) \psi(b(m)) + B(m) \phi(b(m)).
\end{align*}
Differentiating this expression and using (\ref{reg1}) yields
\begin{align*}
\left[\frac{\mu(b(m))}{r} \,F(m)\right]' \!\!& = [A(m) \psi'(b(m)) + B(m) \phi'(b(m))] b'(m) + A'(m) \psi(b(m)) + B'(m) \phi(b(m)) \nonumber
\\
& =  F(m)b'(m) + A'(m) \psi(b(m)) + B'(m) \phi(b(m)),
\end{align*}
from which we deduce that
\begin{align*}
W_m(b(m),m^+) = - b'(m)F(m) + \left[\frac{\mu(b(m))}{r} \,F(m)\right]'\!= W_m (b(m), m^-).
\end{align*}
Thus $W$ is $\CC^{2,1}$ on the open set $\{(x,m) :  0 < m <\overline m \wedge x \}$, as claimed. By Proposition \ref{lemedo}, the solution $b$ to the Cauchy problem \eqref{ode2} is defined on $[0, \overline x]$  and is $\CC^1$ on $(0,\overline y \wedge \overline x)$, with  $\overline m < \overline y \wedge \overline x$ thanks to our assumption. It follows that the expressions (\ref{d1})--(\ref{d11}) can be extended to a function that coincides with $W$ on $A_1$ and is $\CC ^{ 2,1}$ on some neighborhood of $A_1$. Hence $W \in \CC^{2,1}(A_1)$. The result follows. \hfill $\blacksquare$

\subsection{Proofs} \label{AppendixProofs}

\noindent \textbf{Proof of Proposition \ref{bench}.} We borrow several arguments from the proof of \cite[Theorem 4.3]{SGL}.

\medskip

(i) If $m \geq \overline x$, then $\mu(m) \leq r U(m)$ by Lemma \ref{lem:existence_barx}. Letting $W^m(x) : =x-m+U(m)$ for $x\geq m$, we have $W^m(m)=U(m)$, $W^{m \prime}= 1$, $W^{m \prime \prime}=0$, and $(\CL-r)W^m \leq 0$ on $[m,\infty)$, where the inequality follows from $\mu'< r$ by Assumption \ref{ass:A2}. A standard verification argument (see \cite[Lemma 3.1]{SGL}) implies $W^m\geq V^m$. As $W^m$ is the payoff associated to any control $L \in \mathcal A^m(x)$ such that $L_0=x-m$, we conclude that $V^m=W^m$, which proves (i).

\medskip

(ii)--(iv) If $m <\overline x$, then $\mu(m) >r U(m)$ by Lemma \ref{lem:existence_barx}. The proof consists of five steps.

\subparagraph{Step 1}

We first claim that, for $m \in [0, \overline x)$, if $N(\eta(m),m)=0$ for some $\eta(m)>m$, then $V^m$ is given by \eqref{eq:formulaVm} and the control $L^m$ defined by \eqref{eq:def_Lm} is optimal.

Indeed, let $W^m$ be given by \eqref{eq:formulaVm}. By construction, $W^m(m)=U(m)$, $(\CL-r)W^m=0$ on $[0,\eta(m))$, and $W^m_x=1$ and $W^m_{xx}=0$ on $(\eta(m),\infty)$. The constants $A^m$ and $B^m$ in \eqref{Aaux}--\eqref{Baux} are constructed so that $W^m_x(\eta(m)-)=1$, and thus $W^m$ is $\mathcal C^1$. Notice that $\psi$ and $\phi$ satisfy
\begin{align}\label{eq:ODE_phi_psi}
u''  =\frac{2}{\sigma^2} \, (r u - \mu u') \text{ on } [0, \infty).
\end{align}
Using \eqref{eq:ODE_phi_psi}, a direct computation shows that $N(\eta(m),m)=0$ implies
\begin{align*}
A^m\psi''(\eta(m))+B^m \phi''(\eta(m))=0,
\end{align*}
so that $W^m$ is $\mathcal C^2$ on $[m,\infty)$, with $W^m_{xx}(\eta(m))=0$. The properties of the solutions to the ode ${\cal L} h - rh =0$ when Assumption \ref{ass:A2} is satisfied, as derived in \cite[Lemma 4.2]{SGL}, imply that $W^m$ is increasing and concave on $[m,\eta(m)]$. As in \cite[Lemma 3.1]{SGL}, a standard verification argument then yields $W^m \geq V^m$. Finally, the existence of the process $L^m$ solution of \eqref{eq:def_Lm} is well-known and follows for example as a special case of Lemma \ref{step1} below, and that $W^m$ coincides with the payoff associated to $L^m$ follows from applying It\^o's formula (see the discussion before \cite[Theorem 4.3]{SGL}). This concludes the proof that $V^m=W^m$. The claim follows.

\subparagraph{Step 2}

We next claim that, if $m \in [0,\overline x]$, then the equation $N(x,m)=0$ admits at most one solution $x \geq m$, denoted by $\eta(m)$. Moreover, $\eta( \overline x) = \overline x$ and, for each $m \in [0,\overline x)$, $\eta (m) >m$ whenever $\eta(m)$ is well-defined.

Indeed, let $m \in [0,\overline x]$ and assume that $N(\eta(m),m)=0$ for some $\eta(m) \geq m$. To prove the uniqueness of $\eta(m)$, notice that
\begin{align*}
&\frac{\partial N}{\partial x}\,(x,m)
\\
&=\!\left[\frac{\mu'(x)}{r}-1\right]\! D(x,m) +\frac{\mu(x)}{r}\,[\psi''(x)\phi(m) -\phi''(x)\psi(m)] -U(m)[\psi''(x) \phi(x)-\phi''(x) \psi(x)].
\end{align*}
From $N(\eta(m),m)=0$ together with \eqref{eq:ODE_phi_psi} and Assumption \ref{ass:A2}, this implies
\begin{align} \label{eq:deriv_g_neq0}
\frac{\partial N}{\partial x}\,(\eta(m),m)=\! \left[\frac{\mu'(\eta(m))}{r}-1\right]\!D(\eta(m),m)<0 ,
\end{align}
which proves that $\eta(m)$ is the unique solution to $N(x,m)=0$ and that $N(x,m)\gtrless 0$ if $x \lessgtr \eta(m)$ for all $x\geq m$. Finally, $\eta( \overline x)= \overline x$ as $N(\overline x, \overline x)=0$ and $\eta(m) >m$ if $m \in [0, \overline x)$ as $N(m,m) = \big[\frac {\mu(m)}{r}-U(m)\big] D(m,m)>0$. The claim follows.

\subparagraph{Step 3}

We now claim that, if $0 \leq m_1<m_2 \leq \overline x$ and $\eta(m_2)$ is well-defined, then $V^{m_1}(x) > V^{m_2}(x)$ for all $x \geq m_2$.

Indeed, let us define a control $\tilde L$ as follows. First, letting $(X^{L^{m_2}},L^{m_2})$ be the solution to \eqref{eq:def_Lm} for $m=m_2$, we let $\tilde L :=L^{m_2}$ on $[0,\tau^{m_2})$, where $\tau^{m_2} :=\inf \hskip 0.5mm \{t \geq 0: X^{L^{m_2}}_t \leq m_2\}$. Second, we let $\tilde L : = L^{*}$ on $[\tau^{m_2},\infty)$, where $L^*$ intuitively reflects $X$ at $\overline x$ starting at time $\tau_{m_2}$. Formally, we consider the unique solution $(X^{L^*}, L^*)$ for $t\geq \tau^{m_2}$ to
\begin{align} \label{eq:def_Lstar}
\left\{ \begin{array} {rcl}
\mathrm dX^{L^*}_t \hskip -2.5mm  &=& \hskip -2mm \mu(X^{L^*}_t)\, \mathrm dt+ \sigma(X^{L^*}_t)\, \mathrm dB_t- \mathrm dL^*_t, \quad X^{L^*}_{\tau ^{m_2}-}=m_2
\\
L^*_t \hskip -2mm & =& \hskip -2mm \sup_{0\leq s \leq t} \big[L^{m_2}_{\tau^{m_2}}+ \int_0^s \mu(X^{L^*}_u) \,\mathrm du + \int_0^s \sigma(X^{L^*}_u) \,\mathrm dB_u - \overline x \big]^+
\end{array}
\right. \hskip -2mm.
\end{align}
The existence of $L^{m_2}$ and $L^*$ follows for example from Lemma \ref{step1} below; notice that $\tilde L$ is continuous except possibly at time $0$. Letting $\tau^{m_1} : =\inf\{t\geq \tau^{m_2} :  X^{\tilde L}_t\leq m_1\}$, we have
\begin{align}
V^{m_1}(x) & \geq \EE_{x} \! \left [\int_{[0,{\tau^{m_1}}]} \mathrm e^{-rs} \, \mathrm d \tilde L_s  + \mathrm  e^{-r \tau^{m_1}}U(m_1)\right ]\notag
\\
& =  \EE_{x} \! \left [\int_{[0,{\tau^{m_2}}]} \mathrm e^{-rs} \, \mathrm dL_s^{m_2} + \int_{(\tau^{m_2},\tau^{m_1}]} \mathrm e^{-rs} \, \mathrm d L^*_s + \mathrm e^{-r \tau^{m_1}} U(m_1) \right ]\hskip -1mm, \label{choli1}
\end{align}
where the first inequality holds  because $\tilde L$ is an admissible process for problem (\ref{aux}) when $m= m_1$, and the equality holds because $\tau^{m_2} < \tau^{m_1}$ $\mathbb P$-a.s.\! on $\{\tau^{m_2}<\infty\}$. Now, notice that $X^{\tilde L} \in [0, \overline x ]$ for $t\in [\tau^{m_2}, \tau^{m_1}]\cap[0,\infty)$ and is continuous on that time interval, and that $X^{\tilde L}_{\tau^{m_i}}=m_i$ $\mathbb P$-a.s.\! on $\{\tau^{m_i}< \infty \}$ for $i=1,2$. Therefore,
\begin{align}
& \EE_{x} \!\left[\int_{(\tau^{m_2},\tau^{m_1}]} \mathrm e^{-rs} \, \mathrm dL^*_s+ \mathrm e^{-r \tau^{m_1}} U(m_1) \right ] \notag \allowdisplaybreaks
\\
&= \EE_{x} \! \left [\int_{(\tau^{m_2},\tau^{m_1}]}\mathrm e^{-rs} \, \mathrm dL^*_s + \mathrm e^{-r \tau^{m_1}} U(X^{\tilde L}_{\tau^{m_1}})\right] \notag
\\
& =  \EE_{x} \! \left [\mathrm e^{-r \tau^{m_2}}U(X^{\tilde L}_{\tau^{m_2}}) \right ] \! +  \EE_{x} \!\left [\int_{(\tau^{m_2},\tau^{m_1}]}\mathrm e^{-rs}(\CL U   - r U)(X^{\tilde L}_s) \, \mathrm ds \right ] \notag
\\
&  \quad  + \, \EE_x \! \left[ \int_{(\tau^{m_2},\tau^{m_1}]} \mathrm e^{-rs}[1-U'(X^{\tilde L}_s)]\, \mathrm dL^{*}_s\right]\notag
\\
&= \EE_{x} \hskip 0.15mm \big [\mathrm e^{-r \tau^{m_2}}U(m_2) \big] + \EE_{x} \! \left [\int_{(\tau^{m_2},\tau^{m_1}]} \mathrm e^{-rs}(\CL U   - r U)(X^{\tilde L}_s) \, \mathrm ds \right ]\notag
\\
&  \quad  + \, \EE_x \! \left[ [1-U'(\overline x)] \int_{(\tau^{m_2},\tau ^{m_1}]} \mathrm e^{-rs} \, \mathrm dL^{*}_s\right]\notag
\\
& >  \EE_{x} \hskip 0.15mm \big [\mathrm e^{-r \tau^{m_2}}U(m_2) \big], \label{choli2}
\end{align}
where the second equality follows from It\^o's formula, the third equality follows from the fact that the random measure $\mathrm dL^*$ only charges $\{s \in (\tau^{m^2}, \tau^{m^1}]: X^{\tilde L}_s= \overline x\}$, and the final inequality follows from the fact that $U'(\overline x)=1$ as $\overline x\geq u^*$ and from \eqref{tard} together with $\PP_x \hskip 0.3mm [\tau_{m_2}<\tau_{m_1}]>0$. Chaining \eqref{choli1}--\eqref{choli2} yields
\begin{align*}
V^{m_1}(x) > \EE_{x} \! \left [\int_{[0,{\tau^{m_2}}]} \mathrm e^{-rs} \, \mathrm dL_s^{m_2} + \mathrm e^{-r \tau^{m_2}} U(m_2) \right ] \! = V^{m_2} (x).
\end{align*}
The claim follows.

\subparagraph{Step 4}

We finally claim that the mapping $m \mapsto \eta(m)$ is well-defined, increasing, and $\mathcal C^1$ on $[0,\overline x]$.

Indeed, recall from Step 1 that $\eta(\overline x)=\overline x$ and $N(x,\overline x)<0$ for all $x > \overline x$. By continuity of $N$, the set
$H:=\{ m \in [0,\overline x] : N(x,m)<0 \text{ for some } x >m\}$ is open in $[0,\overline x]$ and contains $\overline x$. Let $\delta:= \inf \hskip 0.5mm \{ y \in [0, \overline x) : [y, \overline x] \subset H\}$, so that $\delta<\overline x$. Because $N(m,m) >0$ for all $m \in [0, \overline x)$ and $N(\overline x, \overline x)=1$, there exists by Step 1 a unique solution $\eta(m)$ to $N(x,m) = 0$ for all $m \in ( \delta, \overline x]$.
As $N$ is $\CC^1$ on $[0,\infty)^2$, we deduce from \eqref{eq:deriv_g_neq0} together with the implicit function theorem that $\eta$ is $\CC^1$ on $(\delta,\overline x]$. Let us define the function $h(x,m) : = V^m(x) - \frac{\mu(x)}{r}$ for $m \in (\delta,\overline x)$ and $x\geq m$. By Step 1, \eqref{eq:formulaVm} holds for $m\in (\delta, \overline x]$, and thus $h(\eta(m),m) =0$. Moreover, because $\eta$ is $\CC^1$, so is $h$ by \eqref{eq:formulaVm}. Differentiating, we obtain
\begin{align}\label{eq:formula_etaprime}
\eta'(m)h_x(\eta(m),m)+h_m(\eta(m),m)=0.
\end{align}
By \eqref{HJBbench} and Assumption \ref{ass:A2},
\begin{align*}
h_x(x,m)=V^m_x(x) - \frac{\mu^{\prime}(x)}{r} \geq 1 -\frac{\mu^{\prime}(x)}{r} > 0.
\end{align*}
On the other hand, the mapping $m \mapsto V^m(x)$ is decreasing on $(\delta,\overline x)$ by Step 3, so that $h_m(\eta(m),m) \leq 0$ and thus $\eta'(m) \geq 0$ for all $m \in (\delta,\overline x)$. Moreover, $\eta$ must be increasing on $(\delta,\overline x)$. Otherwise, we would have $\eta(m)=x$ for all $m$ on some non-degenerate interval, so that $m \mapsto V^m (x)$ would also be constant on that interval, a contradiction. Finally, define $\underline\eta:=\lim_{m \rightarrow \delta}\eta(m)$, which exists by monotonicity. By continuity of $N$, we have $N(\underline \eta, \delta)=0$. Together with \eqref{eq:deriv_g_neq0}, this implies that $\delta \in H$, and thus $\delta=0$ as $H$ is open in $[0, \overline x]$. From \eqref{eq:formula_etaprime}, we deduce that $\eta'$ remains bounded at $0$, so that $\eta$ is $\CC^1$ and increasing on $[0,\overline x]$. The claim follows. Hence the result. \hfill $\blacksquare$

\bigskip

\noindent \textbf{Proof of Proposition \ref{skob}.} We will assume in what follows that $\mu$ and $\sigma$ are defined and globally Lipschitz on $\RR$; this does not affect the result of Proposition \ref{skob} as the solutions are considered only up to the first hitting time of $0$.
We start with the following existence result.

\begin{applem} \label{step1}
Let $l$ be a differentiable function defined on $(-\infty,\overline m]$ such that $l(\overline m)=\overline m$ and $|l'(m)| < c < 1$ for all $m \in (-\infty,\overline m]$. Then$,$ for every initial condition $(x,m) \in \RR^2$ with $x\geq m,$ there exists a unique strong solution
$(X,M,L)$ to the reflected sde
\begin{align}
X_{0-}&=x, \; M_{0-}=m, \; L_{0-}=0, \label{sko00l}
\\
\mathrm dX_t &= \mu(X_t) \, \mathrm dt + \sigma (X_t) \, \mathrm dB_t - \mathrm dL_t,\; M_t = m \wedge \inf_{0 \leq s \leq t} X_s, \quad t \geq 0, \label{sko0l}
\\
X_t & \in [M_t, l(M_t)]\cap (- \infty,\overline m] \;\; \mathbb P\text{-a.s.}, \quad t \geq 0, \label{sko1l}
\\
\int_{[0,t]}&\, \ind_{\{X_s < l(M_s) \}} \, \mathrm dL_s = 0 \;\; \mathbb P\text{-a.s.}, \quad  t\geq 0. \label{sko2l}
\end{align}
\end{applem}

\vskip 3mm

\noindent \textbf{Proof.} First, notice that, as $l$ is Lipschitz with constant $c<1$, we have $l(m)>m$ for all $m\in (-\infty,\overline m]$.
Conditions \eqref{sko1l}--\eqref{sko2l} imply that for any solution
\begin{align*}
L_0 & = (x-\overline m)\ind_{m> \overline m}+ [x-l(m)]\ind_{m\leq \overline m \text{ and } x \geq l(m)},
\\
X_0 &= \overline m \ind_{m > \overline m} + l(m)\ind_{\{m\leq \overline m \text{ and } x \geq l(m)\}},
\\
M_0 &=\overline m \ind_{m>\overline m} + m \ind_{m \leq \overline m},
\end{align*}
and $(X,M,L)$ are continuous except for a possible deterministic jump at time 0. Thus we may with no loss of generality assume that the initial condition is $(x,m)=(l(m),m)$ for some $m\in (-\infty,\overline m]$, as the general solution can be obtained by adding the initial deterministic jump. To simplify notation, we will only write the proof in the case $x=m=\overline m$, the other cases being similar.

The proof follows usual arguments. We first show that, for $T >0$ small enough, there exists on $[0,T]$ a unique continuous solution to   (\ref{sko0l})--(\ref{sko2l}) with $X_0 = M_0= \overline m$. For $T >0$, let $\mathcal X$ be the space of continuous $\mathbb F$-adapted processes $X:=(X_t)_{0 \leq t \leq T}$ such that $\EE \hskip 0.15mm \big[ \sup_{0\leq t \leq T}| X_t |^2\big]<\infty$, which, endowed with the norm $\| X \| =\sqrt{ \EE \hskip 0.15mm \big[\sup_{0\le t \le T}| X_t |^2\big]}$, is a Banach space. We will apply the contraction mapping theorem to the mapping $\Phi$ defined on $\cal{X}$ by
\begin{align*}
\Phi(X)_t : = \overline m+ \int_0^t \mu(X_s) \, \mathrm ds  + \int_0^t \sigma(X_s) \, \mathrm dB_s  -  L(X)_t, \quad t\in [0,T],
\end{align*}
where
\begin{align*}
L(X)_t : = \sup_{0 \leq s \leq t}\left[  \overline m+ \int_0^s \mu(X_u) \, \mathrm du  +  \int_0^s \sigma(X_u) \, \mathrm dB_u  -  l (M^X_s )\right]^+ \hskip -1.5mm, \quad t\in [0,T],
\end{align*}
where $M^X_s : =\inf_{u\in[0,s]} X_u$ for all $s \in [0,t]$. Notice that the process $\Phi(X)$ is well-defined, $\mathbb F$-adapted, and continuous. Let us show that $\Phi$ takes values in $\mathcal X$ and is Lipschitz. Consider $X $ and $Y$ two elements of ${\cal X}$. Using that $\mu$ and $\sigma$ are Lipschitz along with Doob's inequality, we obtain that, for some positive constant $K$,
\begin{align}
\sqrt{ \EE \! \left[\sup_{0\leq t \leq T}\left|\int_0^t [\mu(X_s) - \mu(Y_s)]\,\mathrm ds \right|^2 \right]}\, &+ \sqrt{\EE \! \left[\sup_{0\leq t \leq T} \left|\int_0^t [\sigma(X_s) - \sigma(Y_s)]\, \mathrm dB_s \right|^2 \right]}\nonumber
\\
&\leq  (K T+ 2 K \sqrt{T}) \, \sqrt{ \EE \! \left[ \sup_{0\leq s \leq T}|X_t - Y_t|^2 \right]} \nonumber
\\
&<\infty. \label{reflex2}
\end{align}
Letting $M_s^X:=\inf_{u\in[0,s]}X_u$ and $M_s^Y:=\inf_{u\in[0,s]}Y_u$, one can easily check that, for $t\in [0,T]$,
\begin{align*}
|L(X)_t - L(Y)_t| \leq \sup_{0 \leq s \leq t} &\, \Bigg\{ \left[ \overline m+ \int_0^s \mu(X_u) \, \mathrm du + \int_0^s \sigma(X_u) \, \mathrm dB_u - l (M^X_s )\right]^+
\\
& - \left[  \overline m+ \int_0^s \mu(Y_u) \, \mathrm du + \int_0^s \sigma(Y_u) \, \mathrm dB_u - l (M^Y_s )\right]^+ \Bigg\},
\end{align*}
which, because the mapping $x\mapsto x^+$ is 1-Lipschitz, leads to
\begin{align*}
&|L(X)_t - L(Y)_t|
\\
& \leq \sup_{0\leq s \leq t} \left \{ \left| \int_0^s [\mu(X_u) - \mu(Y_u)] \,\mathrm du \right| + \left|\int_0^s [\sigma(X_u) - \sigma(Y_u)] \,\mathrm dB_u \right| + \left|l(M_s^X) - l(M_s^Y)\right|\right \} \!,
\end{align*}
so that
\begin{align}
\| L(X) - L(Y) \| &\leq  \sqrt{ \EE \! \left[\sup_{0\leq t \leq T}\left|\int_0^t [\mu(X_s) - \mu(Y_s)]\,\mathrm ds \right|^2 \right]} \nonumber
\\
&  \quad +\sqrt{\EE \! \left[\sup_{0\leq t \leq T} \left|\int_0^t [\sigma(X_s) - \sigma(Y_s)]\, \mathrm dB_s \right|^2\right]}  \nonumber
\\
& \quad + \sqrt{ \EE\! \left[\sup_{0\leq t \leq T}\left|l(M_t^X)- l(M_t^Y)\right|^2\right]}. \label{reflex3}
\end{align}
Because $l$ is $c$-Lipschitz, we have
\begin{align}
\sqrt{{\EE} \!\left [   \sup_{0 \leq t\leq T}   \left | l(M_t^X) - l(M_t^Y)\right |^2  \right ]} & \leq c\, \sqrt{{\EE} \left [   \sup_{0 \leq t\leq T}   \left |M_t^X - M_t^Y\right |^2  \right ]} \nonumber
\\
& \leq c\,  \sqrt{{\EE} \left [   \sup_{0 \leq t\leq T}   \left | X_t - Y_t\right |^2  \right ]}  \nonumber
\\
& <\infty. \label{reflex4}
\end{align}
Using the triangle inequality, it follows from (\ref{reflex2})--(\ref{reflex4}) that
\begin{align}
\| \Phi(X) - \Phi(Y) \| &\leq  \sqrt{ \EE \! \left[\sup_{0\leq t \leq T}\left|\int_0^t [\mu(X_s) - \mu(Y_s)]\,\mathrm ds \right|^2 \right]} \nonumber
\\
&  \quad +\sqrt{\EE \! \left[\sup_{0\leq t \leq T} \left|\int_0^t [\sigma(X_s) - \sigma(Y_s)]\, \mathrm dB_s \right|^2\right]} \nonumber
\\
& \quad  + \| L(X) - L(Y) \| \nonumber
\\
&\leq  [2(K T+ 2 K \sqrt{T}) + c] \| X - Y \| \nonumber
\\
&<\infty. \label{lip}
\end{align}
The proof that $\vert\vert \Phi (X) \vert\vert < \infty$ for $x\in \mathcal X$ follows along the same lines as the proof of \eqref{lip} and is thus omitted. Therefore $\Phi$ takes values in $\mathcal X$ and is Lipschitz with coefficient $2(K T+ 2 K \sqrt{T}) +c$. For $T$ small enough, we have $2(K T+ 2 K \sqrt{T}) + c <1$ and $\Phi$ is a contraction; by the contraction mapping theorem, we get the existence and the uniqueness in ${\cal X}$ of a fixed point $X$ of $\Phi$. From the definition of $\Phi$, the triple $(X,L(X),M^X)$ satisfies (\ref{sko0l})--(\ref{sko2l}) on $[0,T]$ with initial condition $X_0=M_0=\overline m$.

Finally, we show that any strong solution $X$ to (\ref{sko0l})--(\ref{sko2l}) on $[0,T]$ necessarily belongs to $\cal X$. The proof again follows usual arguments. We consider $f_n(t) : = \EE \hskip 0.15mm \big[\sup_{0\leq s \leq t \wedge T_n} |X_s| ^2\big]$ with $T_n :=\inf \{ u \geq 0 : |X_u| >n\}$ for all $n \geq 1$, and, proceeding analogously as above, we find two constants $c_1$ and $c_2$ independent of $n$ such that $f_n(t) \leq c_1 + c_2\int_0^t f_n(s) \, \mathrm ds$ for all $0 \leq t\leq T$. By Gronwall's lemma, we obtain $\EE \hskip 0.15mm \big[\sup_{0\leq t \leq T \wedge T_n} |X_s|^2\big] \leq C$ for some constant $C$ independent of $n$. Letting $n \to \infty$, we conclude that $X \in {\cal X}$.

To conclude for all $T>0$, we consider a subdivision of $[0,T]$ in $n$ subintervals where $n$ is large enough to ensure the existence and uniqueness of the solution to (\ref{sko0l})--(\ref{sko2l}) on the intervals $([{k \over n}\, T, {k+ 1 \over n}\, T])_{0 \leq k \leq n-1}$. This only requires extending the previous proof to a random initial condition, which is direct by adapting the notation. The result follows. \hfill $\blacksquare$

\bigskip

We now return to the proof of Proposition \ref{skob}. We shall only consider the case $\tau=0$ as the proof, including Lemma \ref{step1}, can be easily extended to an arbitrary stopping time $\tau$. The function $b$ is defined on $[0,\overline x]$, with $b(0)=x^0$. We extend $b$ to $(- \infty, \overline m]$ by defining $b(m): =x^0-m$ for $m<0$ to maintain the ratio $\frac{b(m)+m}{2}$ constant for $m \le 0$. According to Proposition \ref{lemedo}, the function $b$ is $\CC^1$ on $(0,\overline m]$, with $b'(\overline m) <1$. Therefore, there exists some $m_0 \in (0, \overline m)$ such that $b'(m) <1$ for $m \in [m_0, \overline m]$. Moreover, observe that the mapping $m \mapsto b(m)-m$ is uniformly bounded below by some constant $\delta>0$ on $(-\infty,m_0]$. Now, let $(x,m) \in \CJ$. If $m \geq m_0$, applying Lemma \ref{step1} to some function $l$ that coincides with $b$ on $[m_0, \overline m]$, there exists a unique strong solution $(\overline X, \overline M,\overline L)$ to
\begin{align*}
\overline X_{0-}& =x, \;\overline M_{0-}=m, \; \overline L_{0-}=0,
\\
\mathrm d \overline X_t &= \mu(\overline X_t) \, \mathrm dt + \sigma (\overline X_t) \, \mathrm dB_t - \mathrm d\overline L_t,\; \overline M_t=m \wedge \inf_{0 \leq s \leq t} \overline X_s,\quad 0\le t \le \gamma_0 ,
\\
\overline X_t &\in [ \overline M_t, b( \overline  M_t)] \cap[m_0,\overline m] \;\; \mathbb P\text{-a.s.}, \quad  0\le t \le \gamma_0 ,
\\
\int_{[0,t]}&\, \ind_{\{\overline X_s < b( \overline M_s) \}} \, \mathrm d \overline L_s = 0 \;\; \mathbb P\text{-a.s.}, \quad  0\le t \le \gamma_0,
\end{align*}
where $\gamma_0:=\inf \hskip 0.5mm \{ t \ge 0: \overline X_t \leq m_0\}$. If $m <m_0$, we simply define $\gamma_0: =0$, $\overline X_0:=x \wedge b(m)$, $\overline M_0:=m$, and $\overline L_0:=x-x\wedge b(m)$.

Now, set $b_0:=b(m_0)$ and consider the sequence of pairs $(X^{(k)}, L^{(k)})_{k \geq 0}$ recursively defined by $X^{(0)}_t := \overline X_t $ and $L^{(0)}_t := \overline L_t$ for any time $t$ in the random interval $[0,\gamma_0]$ and, for $k \geq 1$,
\begin{align*}
X_t^{(k)} &:=X_{\gamma_{k-1}}^{(k-1)} + \int_{\gamma_{k-1}}^t \mu(X_s^{(k)}) \, \mathrm ds  +  \int_{\gamma_{k-1}}^t \sigma(X_s^{(k)}) \,\mathrm  dB_s -  L_t^{(k)},
\\
L_t^{(k)} & : = \sup_{\gamma_{k-1} \leq s \leq t}\left[ X_{\gamma_{k-1}}^{(k-1)}+\int_{\gamma_{k-1}}^s \mu(X_u^{(k)}) \, \mathrm du +\int_{\gamma_{k-1}} ^s \sigma(X_u^{(k)}) \, \mathrm dB_u -  b_k\right]^+
\end{align*}
for any time $t$ in the random interval $(\gamma_{k-1}, \gamma_{k}]$, where
\begin{align*}
\gamma_{k}:=\inf\left\{t \ge \gamma_{k-1}: M_t^{(k)}<m_k \text{ and }X^{(k)}_t=\frac{b(M^{(k)}_t)+M^{(k)}_t}{2} \right\}\!,
\end{align*}
for
\begin{align*}
m_k:=M^{(k-1)}_{\gamma_{k-1}}, \; b_k:=b(m_k), \text{ and } M_t^{(k)}:=m_k \wedge \inf_{\gamma_{k-1} <s \le t} X^{(k)}_s.
\end{align*}
That is, by definition, $(X^{(k)}_t)_{\gamma_{k-1} < t \leq \gamma_k}$ is the unique strong solution to an sde reflected at the fixed level $b_k$. For $k=0$, we let $(X_t,L_t) : = (X_t^{(0)},L_t^{(0)})$ for all $t \in [0, \gamma_{ 0}]$, and, for each $k\geq 1$, we let $(X_t,L_t) := (X_t^{(k)},L_{ \gamma_{k-1}} +L_t^{(k)})$ for all $t \in (\gamma_{k-1}, \gamma_{k}]$. By construction, the pair $(X_t,L_t)$ is defined on $[0, \lim_{k\to \infty} \gamma_k)$ and satisfies (\ref{sko0})--(\ref{sko2}) on that time interval.

To conclude the proof, it thus only remains to show that $\gamma_k \to \infty$ $\mathbb P$-a.s. Consider the sequence of random variables
\begin{align*}
Z_k : =\frac{b_k+m_k}{2} \, \ind_{\gamma_k<\infty}, \quad k \geq 0,
\end{align*}
which notably satisfies $X_{\gamma_k}=Z_k$ on $\{\gamma_k<\infty\}$. The sequence $(Z_k)_{k\geq 0}$ is nonincreasing and bounded below by $0$, and we have $Z_k \geq \frac{x^0}{2}$ on $\{\gamma_k<\infty\}$ by construction of the extension of the function $b$ to $(-\infty,0)$. Therefore, the sequence $(Z_k)_{k\geq 0}$ $\mathbb P$-a.s. converges to a nonnegative random variable $Z$ that satisfies $Z\ge \frac{x^0}{2}> \frac{\delta}{2}$ on $\{\lim _{k\to \infty} \gamma_k<\infty\}$. We have
\begin{align}
{\PP}\left[ \gamma_{k+1}-\gamma_k>c \! \mid \!  \CF_{\gamma_k}\right]\ind_{\gamma_k<\infty} \geq \PP \left[T_k(\delta) >c \! \mid \! \CF_{\gamma_k} \right]\ind_{\gamma_k<\infty} \label{am}
\end{align}
for some positive constant $c$, where
\begin{align*}
T_k(\delta) : =\inf\left\{ t \ge 0: X_{\gamma_k+t} \notin \hskip -0.2mm \left[Z_k-{\textstyle\frac{\delta}{4}},Z_k+{\textstyle \frac{\delta}{4}} \right]\right\} \ind_{\gamma_k<\infty}.
\end{align*}
By construction, on $\{\gamma_k<\infty\}$, the process $X$ is a solution on the random time interval $[\gamma_k,T_k(\delta)]$ of the uncontrolled sde
\begin{align} \label{SDEA}
\mathrm dX_t= \mu(X_t)\, \mathrm dt+\sigma(X_t)\, \mathrm dB_t.
\end{align}
Therefore, from, the strong Markov property,
\begin{align}
\PP \left[T_k(\delta) >c \! \mid \! \CF_{\gamma_k} \right]\ind_{\gamma_k<\infty}=\widehat\PP_{Z_k}\hskip 0.3mm[T(\delta)>c]\,\ind_{\gamma_k<\infty}, \label{stram}
\end{align}
where $\widehat\PP_x$ denotes the distribution of the unique solution $X$ to \eqref{SDEA} with initial condition $x$ at time $0$ and
\begin{align*}
T(\delta) :=\inf \left\{t \geq 0: X_t \notin \hskip -0.2mm \left[X_0- {\textstyle{\frac{\delta}{4}, X_0+\frac{\delta}{4}}}\right]\right\}.
\end{align*}
By Markov's inequality,
\begin{align}
\widehat\PP_{Z_k}\hskip 0.3mm[T(\delta) >c]\,\ind_{\gamma_k<\infty} &=\left\{1-\widehat\PP_{Z_k}\big[\mathrm e^{-rT(\delta)} \geq \mathrm e^{-rc} \big]\right \}\ind_{\gamma_k< \infty} \nonumber
\\
&\ge \left\{1-\mathrm e^{rc}\,{\widehat\EE}_{Z_k}\big[\mathrm e^{-rT(\delta)} \big]\right\}\ind_{ \gamma _k <\infty}. \label{gram}
\end{align}
Now, consider the standard exit-time problem for a one-dimensional sde,
\begin{align*}
u(x,a,b): ={\widehat\EE}_x\big[ \mathrm e^{-rT_{a,b}}\big] \text{ for } T_{a,b}=\inf \hskip 0.5mm \{t \ge 0 : X_t \notin [a,b] \}.
\end{align*}
In line with \cite[Chapter 5, \S5]{KaratzasShreve}, one can check that the function $u$ is jointly continuous for every interval $[a,b]$ strictly included in the state space $\RR$, as per our initial convention in this proof. Therefore,
\begin{align*}
{\widehat\EE}_{Z_k}\big[\mathrm e^{-rT(\delta)} \big]=u (Z_k,Z_k-{\textstyle \frac{\delta}{4}},Z_k+{\textstyle\frac{\delta}{4}}).
\end{align*}
For each $k\geq 0$, we have $Z_k \in [\frac{x^0}{2}, b_0]$ on $\{\gamma_k<\infty\}$ and thus
\begin{align}
{\widehat\EE}_{Z_k}\big[\mathrm e^{-rT(\delta)} \big] \le \sup_{\frac{x^0}{2} \leq z \leq b_0}u(z,z-{\textstyle{\frac{\delta}{4} ,z+\frac{\delta}{4}}}): =\overline u <1. \label{pic}
\end{align}
Choosing $c \in \big[0,\frac{1}{r}\ln\big( \frac{1}{\overline u}\big) \big] $, we conclude from \eqref{am}, \eqref{stram}--\eqref{gram}, and \eqref{pic} that
\begin{align*}
{\PP}\left[ \gamma_{k+1}-\gamma_k>c \! \mid \!  \CF_{\gamma_k}\right] \ge (1- \mathrm e^{rc}\overline u) >0
\end{align*}
on $\{\gamma_k<\infty\}$ for all $k \geq 0$. As a result, the series of general term ${\PP}\left[ \gamma_{k+1}-\gamma_k>c \! \mid \!  \CF_{\gamma_k} \right]$ diverges on $\{ \lim_{k \to \infty} \gamma_k < \infty\}$. By L\'evy's extension of the Borel--Cantelli lemma (\cite[Theorem 12.15(b)]{Williams}), we conclude that $\gamma_{k+1}-\gamma_k>c$ infinitely often on $\{ \lim_{k \to \infty} \gamma_k < \infty\}$ $\mathbb P$-a.s., and, hence, $\gamma_k \to \infty$  $\mathbb P$-a.s. Hence the result. \hfill $\blacksquare$

\subsection{On $\boldsymbol{u^* <\overline m}$ versus $\boldsymbol{u^* >\overline m}$} \label{Apptwocases}

In this appendix, we illustrate by means of an example that the two scenarios $u^* < \overline m$ and $u^* >\overline m$ are indeed relevant for our analysis.

Suppose that the uncontrolled reserve process $X$ is an arithmetic Brownian motion with drift $\mu>0$ and volatility $\sigma>0$, and that $U$ is, as in Example \ref{example}, the value function of a downgraded extraction problem whose uncontrolled reserve process $\underline X$ is also represented by an arithmetic Brownian motion with volatility $\sigma$, but with a lower drift $\underline \mu \in (0,\mu)$. For $L \in \mathbb L$, define the corresponding controlled process $\underline X^L$ as the unique strong solution of
\begin{align*}
\mathrm d\underline X ^L_t= \underline \mu \, \mathrm dt +\sigma \, \mathrm dB_t - \mathrm dL_t, \quad \underline X_{0-}=z,
\end{align*}
where $B$ is the same Brownian motion driving $X$, and define
\begin{align}
U(z): =\sup_{L \in \underline \CA(0)} \EE_z\!\left[\int_{[0,\underline \tau_0]} \mathrm  e^{-rt} \, \mathrm dL_t \right]\hskip -1mm, \quad z \geq 0,
\label{Garance}
\end{align}
where $\underline \CA(0): =\{ L \in \LL :(\underline X_t^L)^+-(\Delta L)_t\geq 0 \text{ for all } t\geq 0\}$ and $\underline \tau_0: =\inf \hskip 0.5mm \{ t \geq 0 : \underline X_t^L= 0\}$. Then, as explained in Example \ref{example}, $U$ satisfies Assumption \ref{ass:A6}, and Assumption \ref{ass:A2} is also satisfied as $rU(0)=0 < \mu$. It is well-known (see, for instance, \cite{Jeanblanc}) that the optimal extraction threshold for
\eqref{Garance} is
\begin{align*}
u^*:=\frac{2}{\underline \beta^+ - \underline \beta^-}\,\ln \!\left(\frac{-\underline \beta^-}{\underline \beta^+} \right) \hskip -1mm,
\end{align*}
where $\underline \beta^-<0<\underline \beta^+$ are the two roots of the quadratic equation $\frac{\sigma^2}{2}\beta^2 + \underline \mu \beta - r=0$. Similarly, because $U(0)=0$, the optimal extraction threshold for \eqref{aux} with $m=0$ is given by
\begin{align*}
x^0:=\frac{2}{ \beta^+ - \beta^-}\,\ln \!\left(\frac{- \beta^-}{ \beta^+} \right) \hskip -1mm,
\end{align*}
where $\beta^-<0<\beta^+$ are the two roots of the quadratic equation $\frac{\sigma^2}{2}\beta^2 + \mu \beta - r=0$. Holding $r$ and $\sigma$ fixed, it is known that $x^0$ is a single-peaked function of $\mu \in (0, \infty)$, with maximum $\mu^* >0$ (\cite[Proposition 10]{RochetVilleneuve}). It is then useful to distinguish two polar cases, where the comparison between $u^*$ and $x^0$ is unambiguous.

First, if $\underline \mu < \mu \leq \mu^*$, then $u^* < x^0$. In this case, as $x^0 < \overline m$ by Proposition \ref{lemedo}, we necessarily have $u^* < \overline m$.

Second, if $\mu^* \leq \underline \mu < \mu$, then $u^* > x^0$. Then, by continuity and monotonicity of $\eta,$ there exists $c \in (0, \overline x)$ such that $u^* > \eta(c)$. We claim that, in this case, if the density $f$ of $Y$ has support $[0, \overline x]$ and is mostly concentrated on $[0, c]$, then $u^* >\overline m$. Suppose indeed that $f \leq \varepsilon$ over $[c, \overline x]$ for some small $\varepsilon >0$. Then
\begin{align}
H(m) = {f(m) \over F(m)} \leq \frac{\varepsilon}{1 - \varepsilon (\overline x - m)} \text{ for all } m\in [c, \overline x]. \label{bound H}
\end{align}
Now, recall that $u^* \leq \overline x$ and $\eta(u^*)\geq u^*$, so that $\eta(c)<u^*\leq \eta(u^*)$ implies $c<u^*$ as $\eta$ is increasing. As the mapping $(x,m) \mapsto E (x,m) =\frac {H(m) N(x,m)}{ G(x) D(x,m)}$ is continuous on ${\cal K} : = \{(x,m) \in \mathcal J: x \leq \overline x \text{ and } c \leq m \leq \overline x \}$ as $c >0$, we can thus by \eqref{bound H} choose $\varepsilon$ small enough that
\begin{align*}
\overline E_{\mathcal K} :=\sup_{(x,m) \in \cal{K}}  E(x,m) < \frac{u^*-\eta(c)}{u^*-c}.
\end{align*}
By Proposition \ref{lemedo}, $ b(c ) < \eta (c)$ as $c>0$, and, by assumption, $ \eta (c)< u^*$. Because $b$ is solution to the Cauchy problem \eqref{ode1} and $\overline y=\overline x$, we have
\begin{align*}
b(u^*)=b(c)+ \int_{c}^{u^*} E(b(s),s)\, \mathrm ds \leq \eta(c)+ \overline E _{\cal K}(u^*-c) < u^*.
\end{align*}
Thus, as $b(0)=x^0>0$ and $b$ is continuous, we conclude from the intermediate value theorem that $\overline m <u^*$. The claim follows.

\end{document}